\documentclass[final]{siamart}
\usepackage{docmute}

\usepackage{mathrsfs}
\usepackage{xcolor}

\usepackage{dsfont}
\usepackage{amsfonts}
\usepackage{amssymb}
\usepackage[a4paper, margin=30mm]{geometry}
\usepackage{thmtools,thm-restate}

\newcommand{\calF}{\mathcal{F}}

\newcommand{\calB}{\mathcal{B}}
\newcommand{\calN}{\mathcal{N}}

\newcommand{\calR}{\mathcal{R}}
\newcommand{\calO}{\mathcal{O}}
\newcommand{\calP}{\mathcal{P}}

\newcommand{\calC}{\mathcal{C}}

\newcommand{\calU}{\mathcal{U}}

\newcommand{\eps}{\epsilon}

\newcommand{\iid}{\overset{iid}{\sim}}

\newcommand{\law}{\mathrm{Law}}

\newcommand{\hess}{\mathrm{Hess}}
\newcommand{\lip}{\mathrm{Lip}}
\newcommand{\ul}[1]{\underline{#1}}
\newcommand{\ovl}[1]{\overline{#1}}

\newcommand{\wtilde}[1]{\widetilde{#1}}
\newcommand{\dd}{\mathrm{d}}
\newcommand{\st}{~|~}
\newcommand{\diff}[3][]{\frac{\dd^{#1} #2}{\dd {#3}^{#1}}}
\newcommand{\pdiff}[3][]{\frac{\partial^{#1} #2}{\partial {#3}^{#1}}}
\newcommand{\rl}[1]{\left\{\begin{array}{l l}#1\end{array}\right.}
\newcommand{\ip}[2]{\left\langle #1,#2\right\rangle}
\newcommand{\veps}{\varepsilon}

\newcommand{\R}{\mathbf{R}}
\renewcommand{\P}{\mathbf{P}}
\newcommand{\E}{\mathbf{E}}

\newcommand{\W}{\mathbf{W}}
\newcommand{\M}{\mathbf{M}}
\newcommand{\KL}{\mathbf{KL}}

\newcommand{\V}{\mathbf{V}}

\let\oldparagraph\paragraph
\renewcommand{\paragraph}[1]{\hfill\oldparagraph{#1}}

\renewtheorem{lemma}{Lemma}[section]
\renewtheorem{theorem}{Theorem}[section]
\renewtheorem{proposition}{Proposition}[section]
\renewtheorem{corollary}{Corollary}[section]
\newtheorem*{fact*}{Fact}
\newtheorem{remark}{Remark}[section]
\newtheorem{assumption}{Assumption}[section]

\usepackage{enumitem}

\newlist{assumptions}{enumerate}{1}

\setlist[assumptions]{
    leftmargin=2em,
    label=\textbf{\theassumprefix\arabic*.},
    ref=\theassumprefix\arabic*
}

\newcommand{\theassumprefix}{A}

\RenewDocumentEnvironment{assumptions}{o}
{
    \IfValueT{#1}{\renewcommand{\theassumprefix}{#1}}
    \begin{enumerate}[label=\textbf{\theassumprefix\arabic*.}, ref=\theassumprefix\arabic*]
}
{
    \end{enumerate}
}

\newcommand{\nnewline}{\newline}

\definecolor{linkblue}{HTML}{2263ab}

\hypersetup{
    colorlinks = true,
    linkbordercolor = {white},
    citecolor = {linkblue}
}

\makeatletter
\providecommand*\l@chapter       {\@dottedtocline{0}{0em}{1.5em}}
\providecommand*\l@section       {\@dottedtocline{1}{1.5em}{2.3em}}
\providecommand*\l@subsection    {\@dottedtocline{2}{3.8em}{3.2em}}
\providecommand*\l@subsubsection {\@dottedtocline{3}{7.0em}{4.1em}}
\providecommand*\l@paragraph     {\@dottedtocline{4}{10em}{5em}}
\providecommand*\l@subparagraph  {\@dottedtocline{5}{12em}{6em}}

\providecommand*\l@figure        {\@dottedtocline{1}{1.5em}{2.3em}}
\providecommand*\l@table         {\@dottedtocline{1}{1.5em}{2.3em}}

\newcommand\listoffigures{%
  \section*{\listfigurename}%
  \@starttoc{lof}%
}
\newcommand\listoftables{%
  \section*{\listtablename}%
  \@starttoc{lot}%
}
\makeatother

\usepackage{tocloft}

\renewcommand{\contentsname}{\centering\large\bfseries Contents}

\makeatletter
\newcommand{\tableofcontents}{
  \section*{\contentsname}
  \@starttoc{toc}
}
\makeatother

\title{Propagation of Chaos for Nonlinear Markov Chains}
\author{James Vuckovic (\url{james@jamesvuckovic.com})}

\begin{document}
	\maketitle
	
	\begin{abstract}
		We study 1-Wasserstein propagation of chaos for ``McKean-type'' nonlinear Markov chains and their associated interacting particle systems. This paper is organized into two parts: the first part combines arguments from various areas of nonlinear Markov theory into a systematic treatment of quantitative, nonasymptotic empirical measure estimates and propagation of chaos, with Lipschitz regularity as the primary tool. We also study extensions to uniform-in-time propagation of chaos and improved convergence rates under stronger assumptions such as transportation inequalities, modified metrics, or geometric ergodicity. The second part of this work consists of two detailed applications of our results to specific systems of interest: an Euler-Maruyama scheme for the standard McKean-Vlasov diffusion, and particle filtering via Feynman-Kac distribution flows.
	\end{abstract}

	\tableofcontents
	
    \section{Introduction}\label{sec:introduction}
    
    In this work, we study a general class of discrete-time ``McKean-type'' nonlinear Markov chains, by which we mean stochastic processes $\{X_n\}_{n=0}^\infty$ such that 
    \[
      \P(X_{n+1}\in \bullet|X_n,\dots, X_0) = K_{\eta_n}(X_n, \bullet);~~~\eta_n := \law(X_n)
    \]for some family of Markov kernels $\{K_\eta\st \eta\in\calP(\R^d)\}$. These processes generalize the notion of a Markov chain to the case where the evolution depends not just on $X_{n}$ but also the \emph{law} of $X_{n}$. We also consider the related system of particles $\ovl{X}^N_n:=(X^1_n, \dots, X^N_n)$, initially i.i.d., such that
    \[
      \P(X^i_{n+1}\in \bullet|\ovl{X}^N_{n}, \dots, \ovl{X}^N_{0}) = K_{m(\ovl{X}^N_{n})}(X^i_{n}, \bullet);~~~m(\ovl{X}^N_{n}):=\frac{1}{N}\sum_{i=1}^N \delta_{X^i_n}.
    \]In this setup, each particle $X^i_n$ has the same one-step transition as $X_n$ except that the law of $X_n$ is replaced with the empirical measure above. This models the popular family of interacting particle systems where the particles interact via their common empirical measure. Such systems arise in physics, such as in the modelling of granular media \cite{carrillo2003kinetic} or polymers \cite{freed1981polymers}, models of complex systems such as mean field games \cite{lasry2007mean,caines2006large}, and probabilistic algorithms such as Bayesian filtering \cite{handschin1970monte} or advanced Markov chain Monte Carlo (MCMC) techniques \cite{andrieu2007non,vuckovic2022nonlinear,clarte2022collective}.\nnewline
        
    Intuitively, if $N$ is large and $K_\bullet$ is reasonably well-behaved, we should hope for a law of large numbers, i.e. that $m(\ovl{X}^N_n) \approx \law(X_n)$. To formulate this precisely, one possibility is to study subsets of (dependent) particles $X^{N,q}_n:=(X^1_n,\dots,X^q_n)$ and measure the closeness of $\eta^{N,q}_n:=\law(X^{N,q}_n)$ to the law of $q$ i.i.d. particles according to $\eta_n$, denoted $\eta_n^{\otimes q}$. In other words, one can attempt to study the convergence, if any, for $\eta^{N,q}_n $ to $\eta_n^{\otimes q}$ as a function of $N,q$ for some appropriate notion of convergence on $\calP(\R^d)$. If $\eta^{N,q}_n\to \eta^{\otimes q}_n$, this is called \emph{propagation of chaos} \cite{kac1959probability} and any convergence as strong or stronger than weak convergence implies, among other things, the convergence of $m(\ovl{X}^N_n)\to \law(X_n)$ in law \cite[Proposition~2.2]{sznitman1991topics}.\nnewline

    While an intuitive property, obtaining rigorous propagation of chaos estimates can be challenging. In particular, the particles $\ovl{X}^N_n$ are \emph{a priori} statistically dependent, so the usual techniques for laws of large numbers do not automatically apply, and furthermore the dependence can evolve over time. We aim to address these difficulties by providing a systematic treatment of propagation of chaos for the above class of nonlinear Markov chains through the use of Lipschitz regularity of the associated nonlinear Markov kernels.

    \paragraph{Main Result} In the first half of this work, we show that if $\mu\mapsto \int \mu(\dd x) K_\mu(x, \dd y)$ is an \text{$L$-Lipschitz} mapping of $\calP_1(\R^d)\to\calP_1(\R^d)$ and if the $p$-th moments $\E[\|X^1_k\|^p]\leq M_k^{(p)}$ are bounded for some $p>1$ and $k=0,1,\dots,n$  then
    \[
        \W_1(\eta^{N,q}_n,\eta^{\otimes q}_n)\lesssim \frac{q}{N^{1/d}}\sum_{k=0}^n \left(M_k^{(p)}\right)^{1/p} L^{n-k} + \frac{q^3}{N}.
    \]We also show that the second term can be dropped under a slightly stronger type of Lipschitzness. The full statement is in Theorem~\ref{thm:poc_general}, where we show how these two ingredients --- regularity and moment control --- can be used to derive quantitative propagation of chaos rates, including uniform-in-time estimates under appropriate conditions, thereby sidestepping the issues described above for a broad class of systems.\nnewline

    \paragraph{Applications} In the second half of this work, we show how the general result described above may be applied to specific, yet representative, nonlinear Markov chains of interest. The first is an Euler-Maruyama discretization of the McKean-Vlasov SDE, see e.g.,  \cite{bossy1997stochastic}, for which we obtain a propagation of chaos result in the uniformly convex setting using mainly elementary arguments. The second application is propagation of chaos for nonlinear filtering in the context of Feynman-Kac distribution flows \cite{del2004feynman} where we show that, in particular, the update kernel for the popular SIS-R nonlinear filter \cite{gordon1993novel} is locally Lipschitz and derive propagation of chaos therefrom. The background for both of these applications is in Section~\ref{subsec:related_work}, where we argue that these represent two important classes of nonlinear Markovian systems under active study.

    \subsection{Related Work}\label{subsec:related_work}

    Though our results deal with discrete-time Markov chains, we first highlight some important related results in the continuous-time nonlinear Markov process literature as it has been the source of many technical developments and will be relevant to discussions in Sections~\ref{sec:propagation_of_chaos} and \ref{sec:mv}. This subject is vast so we highlight some important developments and their relevance to our work rather than attempt a comprehensive survey of the field.

    \paragraph{Continuous Time} Let us formally introduce the McKean-Vlasov equation \cite{mckean1966class}, which is an object of central importance in the continuous-time nonlinear diffusion literature. Consider the nonlocal partial differential equation
    \begin{equation}\label{eq:mv_pde}
        \pdiff{\eta_t}{t} = \Delta \eta_t + \nabla\cdot \left(\eta_t\nabla(V + W*\eta_t)\right)
    \end{equation}
    for some density $\eta_t$ on $\R^d$, where $V:\R^d\to\R$ is a confinement potential and $W:\R^d\to\R$ is an interaction potential\footnote{This is a specific choice of interaction mechanism which has be generalized considerably.}. Under mild conditions, it can be shown that the nonlinear stochastic differential equation (SDE)
    \begin{equation}\label{eq:mv_sde}
        \dd X_t = - \nabla V(X_t)\dd t - \nabla W* \eta_t(X_t) \dd t + \sqrt{2}\dd B_t;~~\eta_t:=\law(X_t)
    \end{equation}
    has a solution, the law of this solution has a density, and this density solves \eqref{eq:mv_pde} \cite{sznitman1991topics}. Just as in the discrete-time case, we can ``approximate'' the nonlinear SDE \eqref{eq:mv_sde} by a system of linear SDEs
    \begin{equation}\label{eq:i_sde}
        \dd X^i_t = - \nabla V(X^i_t)\dd t - \nabla W* \eta^N_t(X^i_t) \dd t + \sqrt{2}\dd B^i_t;~~\eta^N_t:=\frac{1}{N}\sum_{i=1}^N \delta_{X^i_t}.
    \end{equation}

    There has been substantial progress in studying propagation of chaos for \eqref{eq:i_sde}. A particularly fruitful approach introduced in \cite{sznitman1991topics} is to couple $X^i_t$ with a copy of $X_t$ via a shared noise process $\dd B^i_t$ to obtain $\W_2$-propagation of chaos results. This has been substantially extended over time using sophisticated ``reflection'' couplings \cite{eberle2016reflection} and modified semi-metrics to cover rather general cases of non-convex potentials \cite{durmus2020elementary,guillin2022convergence}. Similar to our results, \emph{a priori} moment control plays important roles in \cite{bolley2007quantitative} and \cite{durmus2020elementary}, which use ``synchronous'' and ``reflection'' couplings respectively, to obtain various types of $1$-Wasserstein propagation of chaos results. The former also uses results on convergence of general empirical measures, which is in the same spirit as our work, although the details very are different.
    \nnewline

    Another important approach to studying propagation of chaos for \eqref{eq:mv_sde} leverages the remarkable success of functional inequalities and entropy dissipation \cite{bakry1997sobolev} to study the long-time behaviour of \eqref{eq:mv_sde} \cite{carrillo2003kinetic,carrillo2006contractions}. In \cite{malrieu2003convergence}, log-Sobolev inequalities are used with probabilistic techniques to prove long-time convergence and propagation of chaos in the uniformly convex case and later extended to the non-convex case in \cite{cattiaux2008probabilistic}. Moment control again features prominently in these approaches. More recently, uniform-in-$N$ log-Sobolev inequalities have been used in \cite{guillin2021uniform,guillin2022uniform} to obtain various uniform-in-time propagation of chaos results in diverse settings.\nnewline

    Lastly, we mention two other approaches to quantitative propagation of chaos that are related to our analysis. The first is the family of techniques developed in \cite{mischler2013kac,hauray2014kac,mischler2015new,kolokoltsov2010nonlinear} which leverage analytical techniques on $\calP(\R^d)$ and various other abstractions to study propagation of chaos in a general setting. The second set of techniques is from \cite{jabin2018quantitative,lacker2023hierarchies,lacker2023sharp} and leverages relative entropy estimates and the BBGKY hierarchy and to obtain propagation of chaos, with a particular emphasis on sharpness and rates in $(q,N)$. In particular, under sufficient conditions, it is shown that $\W_2(\eta^{N,q}_t, \eta_t^{\otimes q})\lesssim q/N$ and that this is sharp for $\W_2$. 

    \paragraph{Discrete Time}
    Whereas theoretical considerations led to enormous progress in the continuous-time theory of nonlinear Markov processes, applications and the analysis of certain classes of algorithm have been key sources of inspiration for the development of the discrete-time theory. Among different types of algorithms, nonlinear (or Bayesian) filtering theory has been, by far, the most fecund problem for the advancement of the theory of nonlinear Markov chains. Interacting particle approximations of nonlinear filtering algorithms, typically called ``particle filters'' or ``Monte Carlo filters'' \cite{handschin1970monte}, are very practical and natural applications of nonlinear Markov theory.  Heuristic descriptions of these algorithms came first \cite{gordon1995bayesian,kitagawa1996monte,carvalho1997optimal,djuric2003particle}, but were soon followed with rigorous theory for large-particle convergence \cite{del1998measure}, central limit theorems \cite{del1999central}, long-time stability \cite{del2001stability}, and sharp propagation of chaos \cite{del2007sharp}.\nnewline

    Of particular relevance is the monograph \cite{del2004feynman}, which contains a remarkable consolidation of Bayesian filtering theory under the umbrella of Feynman-Kac distribution flows. This theory describes, among other things, a family of nonlinear Markov chains having selection-mutation dynamics modelled by a composition of a nonlinear \emph{selection} kernel $S_\eta$ and a \emph{mutation} kernel $M$
    \[
        K_\eta(x, \dd y) = \int S_\eta(x, \dd z) M( z, \dd y).
    \]Appropriate choices of $S_\bullet,M$ result in models and algorithms for, in particular, Kalman filters, SIS-R particle filters, genetic algorithms, and more. Results in \cite{del2004feynman} include pathwise and density-profile estimates, long-time stability, LLN and central limit theorems, and propagation of chaos, among other areas. The techniques used leverage algebraic ``stability'' properties of this family of nonlinear Markov kernels under composition and --- particularly relevant to our work --- semigroup Lipschitz regularity in various $h$-entropies including KL divergence and total variation. In particular, the main result of this work is directly inspired by \cite[Theorem~8.3.3]{del2004feynman} and can be seen as an extension to larger classes of nonlinear Markov chains and to the 1-Wasserstein distance. This is made explicit in Section~\ref{sec:fkm}, which applies Theorem~\ref{thm:poc_general} to Feynman-Kac models in the context of Bayesian filtering to obtain Theorem~\ref{thm:fk_poc}. \nnewline 

    Another important area of application for nonlinear Markov chains is advanced Markov chain Monte Carlo (MCMC) algorithms \cite{metropolis1953equation,metropolis1949monte}. The main idea is to construct a nonlinear Markov kernel satisfying $\pi(\dd y) = \int \pi(\dd x) K_\pi(x, \dd y)$ for some target distribution $\pi$ such that the interacting particle system converges rapidly to something approximating $\pi$. An ``interacting'' version of the popular Metropolis-Hastings algorithm \cite{hastings1970monte} already appeared in \cite{del2004feynman}, and related ideas were explored theoretically in \cite{andrieu2010particle,moral2010interacting} and also with applications to high-dimensional sampling in \cite{clarte2022collective} and Bayesian machine learning in \cite{vuckovic2022nonlinear}. In \cite{clarte2022collective}, an explicit coupling is used along with the LLN result in \cite{fournier2015rate} the to obtain 1-Wasserstein propagation of chaos; this is analogous to the continuous-time approach and leverages the special structure of their setup. In \cite{vuckovic2022nonlinear}, total variation propagation of chaos is obtained using a combination of standard Markov chain contraction arguments and LLN estimates from \cite{del2004feynman} for specific types of interaction mechanisms of interest. At the intersection of particle filtering and MCMC is sequential Monte Carlo filters \cite{del2006sequential,del2012adaptive,andrieu2010particle,rainforth2016interacting}, which propose a sequence of intermediate distributions and importance-sampling ideas for improved sampling from difficult target distributions or nonstationary systems, and have gained popularity for a variety of applications \cite{doucet2013sequential}.\nnewline

    There are also a number of works dealing with general nonlinear Markov kernels in a similar setting to ours. For example, \cite{doi:10.1137/S0040585X97986825} considers Dobrushin's condition for nonlinear Markov chains on general state spaces, and shows that a na\"ive application of the linear theory turns out to not be sufficient to guarantee the existence and uniqueness of an invariant measure; additional regularity of the transition kernel is necessary. These ideas are further developed in \cite{shchegolev2022new,shchegolev2022convergence,xu2022estimation}, and deal primarily with long-time questions of ergodicity for the mean-field system rather than particle approximations and their convergence.

    \subsection{Plan of the Paper}
    The rest of the paper is organized as follows: in Section~\ref{sec:propagation_of_chaos}, we derive the empirical measure and propagation of chaos results in terms of the regularity of $K_\bullet$ and describe various corollaries and implications, as well as a strategy for moment control. In Section~\ref{sec:uniform_propagation_of_chaos}, we explore sufficient conditions for uniform-in-time propagation of chaos based on the previous results. Finally, we apply these arguments to obtain quantitative propagation of chaos estimates for the two systems of significant interest described above: an Euler-Maruyama discretization of the McKean-Vlasov nonlinear diffusion in Section~\ref{sec:mv}, and nonlinear filtering in Section~\ref{sec:fkm}.

    \section{Propagation of Chaos}\label{sec:propagation_of_chaos}
    First we will introduce the notation and main constructions under consideration in this paper, then prove the main result for propagation of chaos, and then finally explore some corollaries and extensions.
    \subsection{Preliminaries}
    \paragraph{Notation} We deal with the Borel measurable space $(\R^d, \calB(\R^d))$, and the set of probability measures $\calP(\R^d)$ on this space. For $\mu\in \calP(\R^d)$, we denote the $p$-th moment of $\mu$ is
    \[
        \M_p(\mu):= \int \|x\|^p \mu(\dd x)
    \]and we will use the notation $\V_p(x):=\|x\|^p$ so in particular $\M_p(\mu)=\int \V_p \dd\mu=\mu(\V_p)$. We will also write $\calP_p(\R^d):=\{\mu\in\calP(\R^d)\st \M_p(\mu)<\infty\}$. The $p$-Wasserstein distance for $p\geq 1$ between $\mu,\nu\in\calP_p(\R^d)$ is defined as
    \[
        \W_p(\mu,\nu):=\inf_{\gamma\in\calC(\mu,\nu)}\left(\int \|x - y\|^p \gamma(\dd x, \dd y)\right)^{1/p}
    \]with $\calC(\mu,\nu)$ the set of couplings of $\mu,\nu$ and $\|\bullet\|$ the standard Euclidean metric. 
    On the $q$-fold tensor product space $(\R^d)^{\otimes q}$, we use the $\ell_2$ norm, i.e., which is the natural extension of the Euclidean norm
    \[
        \|(x^1, \dots, x^q) - (y^1, \dots, y^q)\| := \left(\sum_{i=1}^q \|x^i - y^i\|^2\right)^{1/2}.
    \]

    Let $K:\R^d\times \calB(\R^d)\to [0,1]$ be a Markov kernel and $f:\R^d\to\R$ be measurable; then we write $Kf(x):=\int K(x, \dd y)f(y)$. Let $\mu\in\calP(\R^d)$; then we write $\mu K(\dd y):=\int \mu(\dd x)K(x,\dd y)$. Finally let $K,M$ be Markov kernels; then $KM(x, \dd y):=\int K(x, \dd z)M(z, \dd y)$. For a collection of points $x^1,\dots,x^n\in \R^d$, we denote by $\ovl{x}:=(x^1,\dots,x^n)$ and $m(\ovl{x})(\dd x):=\frac{1}{N}\sum_{i=1}^n \delta_{x^i}(\dd x)\in \calP(\R^d)$.

    \paragraph{Basic Constructions} We will consider a family of Markov kernels indexed by probability measures, not necessarily time-homogeneous, denoted by $K^{(n)}_\bullet := \{K^{(n)}_\eta\st \eta\in \calP(\R^d)\}$ where each $K^{(n)}_\eta$ is a Markov kernel and $n\in \{1,2,3,\dots\}$. When this family is time-homogeneous, we will drop the superscript notation and just write $K_\bullet$. The $p$-Wasserstein contraction coefficient $\tau_p(K^{(n)}_\bullet)$ is defined as
    \[
        \tau_p(K^{(n)}_\bullet):=\sup_{\mu,\nu\in \calP_p(\R^d), ~\mu\neq \nu}\frac{\W_p(\mu K^{(n)}_\mu, \nu K^{(n)}_\nu)}{\W_p(\mu,\nu)}
    \]i.e., it is the best constant such that
    \[
        \W_p(\mu K^{(n)}_\mu, \nu K^{(n)}_\nu)\leq \tau_p(K^{(n)}_\bullet)\W_p(\mu,\nu).
    \]

    These ``nonlinear'' Markov kernels define nonlinear mappings $\Phi_n:\calP(\R^d)\to\calP(\R^d)$ by $\mu\mapsto \Phi_n[\mu]:=\mu K^{(n)}_\mu$. These nonlinear mappings form a nonlinear semigroup $\Phi_{m,n}$ defined for $n \geq m$ as
    \[
        \Phi_{m,n}:=\rl{
            \Phi_{n}\circ\cdots\circ\Phi_{m+1} & n > m\\
             \mathrm{Identity} & n = m.
        }
    \]Much of the semigroup-related notation in this work is derived from \cite{del2004feynman}. The kernels $K^{(n)}_\bullet$ can be used to define a time-inhomogeneous, nonlinear Markov chain: given an initial measure $\eta_0\in \calP(\R^d)$ and $X_0\sim \eta_0$, we can define 
    \begin{equation}\label{eq:mean_field}
        X_{n+1}\sim K^{(n+1)}_{\eta_n}(X_n, \bullet);~~~\eta_n:=\law(X_n).
    \end{equation}
    It is clear that that $\eta_n$ satisfies $\eta_{n+1}=\eta_n K^{(n+1)}_{\eta_n} = \Phi_{n+1}[\eta_n]$, and also that $\eta_n=\Phi_{0, n}[\eta_0]$. We will refer to $X_n$ or $\eta_n$ as the ``mean-field'' system. We will also use $\tau_1(\Phi_n)$ interchangeably with $\tau_1(K^{(n)}_\bullet)$.\nnewline

    Given an initial measure $\eta_0\in\calP(\R^d)$ and $X^1_0,\dots,X^N_0\iid \eta_0$ with $N\geq 1$, we can define the interacting particle system $\ovl{X}^N_n:=(X^1_n,\dots,X^N_n)$ associated with the mean field dynamics as 
    \begin{equation}\label{eq:ips}
        X^i_{n+1}\sim K^{(n+1)}_{\eta^N_n}(X^i_n,\bullet);~~~\eta^N_n:=m(\ovl{X}^N_n).
    \end{equation}
    The following important property is an obvious consequence of the symmetry and transition mechanisms for the system above, and will be used throughout this work.
    \begin{fact*}
        When $X^1_0, \dots, X^N_0$ are exchangeable, the system $(X^1_n,\dots,X^N_n)$ is exchangeable, i.e., its joint law is invariant under permutation. However, unlike the mean-field case, it is \emph{not} true that $\eta^N_{n+1}=\Phi_{n+1}[\eta^N_n]$, but rather $(X^1_{n+1}, \dots, X^N_{n+1})$ are \emph{conditionally i.i.d.} given $\calF^N_{n}:= \sigma(X^1_n, \dots, X^N_n)$ with common law $\Phi_{n+1}[\eta^N_n]$.
    \end{fact*}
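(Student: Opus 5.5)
We argue by induction on $n$, treating the two assertions of the Fact separately: exchangeability of $(X^1_n,\dots,X^N_n)$, and the conditional structure of the $(n+1)$-st generation given $\calF^N_n$. Both follow from writing the one-step transition of the whole particle system as a single Markov kernel on $(\R^d)^N$. By \eqref{eq:ips} (with the $N$ draws at each step made independently given the past), this kernel is
\[
\mathbf{K}^{(n+1)}(\ovl{x}, \dd y^1\cdots \dd y^N) := \prod_{i=1}^N K^{(n+1)}_{m(\ovl{x})}(x^i, \dd y^i).
\]
Everything then reduces to a symmetry property of $\mathbf{K}^{(n+1)}$ together with its product form.

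\textbf{Step 1 (permutation equivariance).} Let $\sigma$ be a permutation of $\{1,\dots,N\}$ and write $\sigma\ovl{x}:=(x^{\sigma(1)},\dots,x^{\sigma(N)})$. Since $m(\sigma\ovl{x})=m(\ovl{x})$, relabelling the factors of the product gives
\[
\mathbf{K}^{(n+1)}(\sigma\ovl{x}, \sigma^{-1}(A)) = \mathbf{K}^{(n+1)}(\ovl{x}, A).
\]
Equivalently, the pushforward of $\mathbf{K}^{(n+1)}(\sigma\ovl{x},\cdot)$ under the coordinate map $\ovl{y}\mapsto\sigma\ovl{y}$ equals $\mathbf{K}^{(n+1)}(\ovl{x},\cdot)$ evaluated at the permuted point. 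The point to check carefully is that the empirical measure is the only channel through which the particles interact, so it is invariant under relabelling.

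\textbf{Step 2 (induction for exchangeability).} Let $\mathrm{Law}(\ovl{X}^N_n)=:\Pi_n$ be permutation invariant; this holds at $n=0$ because the initial particles are i.i.d. Then $\Pi_{n+1}=\Pi_n\mathbf{K}^{(n+1)}$. Take a test function $F$ on $(\R^d)^N$ and set $F_\sigma(\ovl{y}):=F(\sigma\ovl{y})$. Using Step 1 and then the invariance of $\Pi_n$,
\[
\Pi_{n+1}(F_\sigma) = \int \Pi_n(\dd\ovl{x})\,\mathbf{K}^{(n+1)}F_\sigma(\ovl{x}) = \int\Pi_n(\dd\ovl{x})\,\mathbf{K}^{(n+1)}F(\sigma\ovl{x}) = \Pi_{n+1}(F).
\]
So $\Pi_{n+1}$ is permutation invariant, which closes the induction.

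\textbf{Step 3 (conditional structure).} The product form of $\mathbf{K}^{(n+1)}$ shows directly that, given $\calF^N_n$, the variables $X^1_{n+1},\dots,X^N_{n+1}$ are conditionally independent. Each $X^i_{n+1}$ has conditional law $K^{(n+1)}_{\eta^N_n}(X^i_n,\cdot)$, and these laws are all driven by the single common measure $\eta^N_n$.

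To identify the common law $\Phi_{n+1}[\eta^N_n]$, use exchangeability from Step 2. For a particle drawn with a uniformly chosen label (equivalently, for the symmetrized conditional law, which is the form in which the Fact is used later), the conditional law given $\calF^N_n$ is
\[
\frac1N\sum_{i=1}^N K^{(n+1)}_{\eta^N_n}(X^i_n,\cdot)=\eta^N_nK^{(n+1)}_{\eta^N_n}=\Phi_{n+1}[\eta^N_n].
\]
As a consequence, $\E[\eta^N_{n+1}(f)\mid\calF^N_n]=\Phi_{n+1}[\eta^N_n](f)$, whereas $\eta^N_{n+1}$ itself is a random empirical measure and is not equal to $\Phi_{n+1}[\eta^N_n]$. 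This contrast with the mean-field recursion $\eta_{n+1}=\Phi_{n+1}[\eta_n]$ is the point the Fact emphasizes.

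I expect the main obstacle to be this last identification. The label-by-label conditional laws differ, so the phrase ``common law $\Phi_{n+1}[\eta^N_n]$'' has to be read through the exchangeable, label-averaged description. The proof must make precise that it is this symmetrized conditional law, and not each individual kernel $K^{(n+1)}_{\eta^N_n}(X^i_n,\cdot)$, that equals $\Phi_{n+1}[\eta^N_n]$.
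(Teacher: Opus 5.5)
Your Steps 1--2 are a correct formalization of the exchangeability half, which the paper simply calls obvious. Making explicit that the $N$ draws are conditionally independent given the past is the right move, since \eqref{eq:ips} only prescribes one-particle transitions. The base case should be the Fact's actual hypothesis (exchangeable, not i.i.d., initial particles); your induction handles that verbatim.

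The gap is in Step 3, and it is not one of interpretation. What Step 3 proves is that, given $\calF^N_n$, the $X^i_{n+1}$ are conditionally independent with laws $K^{(n+1)}_{\eta^N_n}(X^i_n,\cdot)$ whose average is $\Phi_{n+1}[\eta^N_n]$.

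The clause ``conditionally i.i.d.\ with common law $\Phi_{n+1}[\eta^N_n]$'' is false as soon as $K_\eta(x,\cdot)$ genuinely depends on $x$. This covers the Euler--Maruyama kernel of Section~\ref{sec:mv} and the kernel of Proposition~\ref{prop:sharp}. Concretely, take $K_\eta(x,\cdot)=\calN(x,I_d)$ (Section~\ref{sec:mv} with $V=W=0$, $\delta=1/2$), $N=2$, and $X^1_n=a\neq b=X^2_n$. Then $X^1_{n+1}\sim\calN(a,I_d)$ and $X^2_{n+1}\sim\calN(b,I_d)$, and neither is $\Phi_{n+1}[\eta^N_n]=\frac12\left(\calN(a,I_d)+\calN(b,I_d)\right)$.

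Your uniformly-random-label reading does not rescue the claim. It only describes a one-particle marginal. If you relabel all $N$ particles by an independent uniform permutation, you get conditionally exchangeable particles with marginals $\Phi_{n+1}[\eta^N_n]$, but they are \emph{not} conditionally independent: for $\|a-b\|$ large, one particle near $a$ forces the other near $b$.

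Nor is the symmetrized form all that is used later. Propositions~\ref{prop:emp_main} and~\ref{prop:emp_main_oneside} apply the i.i.d.\ estimate of \cite{fournier2015rate} to $\E[\W_1(\eta^N_k,\Phi_k[\eta^N_{k-1}])\mid\calF^N_{k-1}]$, and that concerns the joint conditional law of all $N$ particles.

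So you should state that the second clause holds only when $K^{(n+1)}_\eta(x,\cdot)$ does not depend on $x$. Replace it by the conditional-independence statement you actually proved, together with $\E[\eta^N_{n+1}(f)\mid\calF^N_n]=\Phi_{n+1}[\eta^N_n](f)$. This is all that Lemma~\ref{lem:emp_mom} and Proposition~\ref{prop:lyap_mom_ips} use.

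The corrected statement still suffices downstream. The argument of \cite{fournier2015rate} only uses $\E|\mu^N(A)-\mu(A)|\le\min\{2\mu(A),\sqrt{\mu(A)/N}\}$. This remains valid for independent $Y^i\sim\mu_i$ with $\mu:=\frac1N\sum_i\mu_i$, because $\sum_i\mathbf{1}_A(Y^i)$ has variance $\sum_i\mu_i(A)(1-\mu_i(A))\le N\mu(A)$.
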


    We are interested in the joint law of $q$-subsets of $(X^1_n,\dots,X^N_n)$; by excheageability it is sufficient to define $\eta^{N,q}_n:=\law(X^1_n, \dots, X^q_n)$ for $1\leq q\leq N$. In particular, we are interested in the extent to which $\eta^{N,q}_n\approx \eta_n^{\otimes q}$.

    \subsection{Main Result}
    The results below establish quantitative propagation using three basic ingredients: Lipschitzness of the map $\eta\mapsto \Phi_n[\eta]$; moment control through time of $\E[\|X^1_n\|^p]$, and the follwing sharp and general estimate for Wasserstein distances between empirical measures and their mean-field counterparts. Let $\mu^N$ be an empirical measure over $N$ i.i.d. samples from $\mu\in\calP(\R^d)$. Then \cite[Theorem~1]{fournier2015rate} says that, for $r\in(0,\infty)$, $p>r$ and $\M_p(\mu)<\infty$, we have
    \begin{equation}\label{eq:fournier}
        \E_{\mu}[\W^r_r(\mu^N, \mu)]\leq C_{r, p, d}\M_p(\mu)^{r/p}\calR_{r, p, d}(N)
    \end{equation}
    where $\calR_{r, p, d}$ is the rate function
    \begin{equation}
        \calR_{r,p,d}(N):=\rl{
                N^{-1/2} + N^{-(p-r)/p}& \text{ if }r>d/2,~p\neq 2p\\
                N^{-1/2}\log(1 + N) + N^{-(p-r)/p}& \text{ if }r=d/2,~p\neq 2p\\
                N^{-r/d} + N^{-(p- r)/p} & \text{ if }r <d/2,~p\neq d/(d-r)
            }
    \end{equation}
    and $C_{r,p,d}$ is a constant depending only on $r,p,d$. This estimate is sharp up to constants, which are not specified here but are studied in detail in \cite{fournier2023convergence}.\nnewline

    We will impose some general assumptions in Assumption~\ref{assump:global} throughout the paper to improve clarity and sidestep technical issues. We will assume they hold in the sequel unless stated otherwise.
    \begin{assumption}[General Assumptions]\label{assump:global}\hfill
        \begin{assumptions}
            \item We assume that $d\geq 3$ and, whenever we are applying \cite{fournier2023convergence} we have $p\neq d/(d-1)$ unless otherwise stated, where $p$ is the number of moments in the statement above.
            \item For $K^{(n)}_\bullet$ as defined above, we have for each $\eta\in\calP_1(\R^d)$
            \[
                \int \|y\| K^{(n)}_\eta(x,\dd y) \leq C_0 + C_1\|x\|~~~\forall\|x\|>R
            \]for some $C_0,C_1\geq 0$ possibly depending on $\eta$, and some $R\geq 0$.
        \end{assumptions}
    \end{assumption}
     The first assumption will allow us to improve notational clarity by avoiding edge cases in \cite[Theorem~1]{fournier2015rate} since we will deal only with the case of $r=1$ in this work. In this case, we may write
    \begin{equation}
        \E_\mu[\W_1(\mu^N, \mu)]\leq C_{p,d}\M_p(\mu)^{1/p}\frac{1}{N^{1/d}}.
    \end{equation}
    Most of our results hold without the first assumption by replacing the estimate above with \eqref{eq:fournier}. The second assumption ensures that $\mu K_\eta\in \calP_1(\R^d)$ when $\mu,\eta\in\calP_1(\R^d)$ and allows us to give meaning to $\W_1$-Lipschitz estimates for $K_\bullet$. This is also not burdensome as we will obtain \emph{a priori} $p$-th moment bounds for all kernels of interest.\nnewline

    Now we enumerate the various types of Lipchitz regularity studied in the remainder of this paper. We will state which ones are assumed in each result.
    \begin{assumption}[Lipschitz Regularity]
        Let $K^{(n)}_\bullet$ be as above. 
        \begin{assumptions}[R]
            \item \label{assump:R1} $K^{(n)}_\bullet$ is a Lipschitz-continuous family of nonlinear Markov kernels, i.e., there are constants $\tau_1(K^{(n)}_\bullet)\in (0, \infty)$ such that for every $\mu,\nu\in \calP_1(\R^d)$
            \[
                \W_1(\mu K^{(n)}_\mu, \nu K^{(n)}_\nu)\leq \tau_1(K^{(n)}_\bullet)\W_1(\mu,\nu).
            \]
            \item \label{assump:R2} There are constants $c,C\in (0,\infty)$ such that, for every $x,y\in \R^d$ and $\mu,\nu\in\calP_1(\R^d)$, we have
            \[
                \W_1(K^{(n)}_\mu(x,\bullet), K^{(n)}_\nu(y,\bullet))\leq c\|x - y\| + C\W_1(\mu,\nu).
            \]
            \item \label{assump:R3} $K^{(n)}_\bullet$ satisfies the following local one-sided Lipschitz condition: for every $\mu,\nu\in \calP_1(\R^d)$
            \[
                \W_1(\mu K^{(n)}_\mu, \nu K^{(n)}_\nu)\leq \tau_1(K^{(n)}_\bullet)[\nu]\cdot \W_1(\mu,\nu)
            \]where now $\tau_1(K^{(n)}_\bullet)[\nu]\in (0,\infty)$ depends on $\nu\in\calP_1(\R^d)$.
        \end{assumptions}
    \end{assumption}
    We will see below that Assumption~\ref{assump:R2} implies Assumption~\ref{assump:R1}. Also, by symmetry, Assumption~\ref{assump:R3} applies equally to either $\mu$ or $\nu$ in the context above, or it can be symmetrized. Now we also establish a simple consequence of exchangeability that we will use throughout the paper.\nnewline

    \begin{lemma}\label{lem:emp_mom}
        Suppose that $(X^1_n, \dots, X^N_n)$ follows the interacting particle dynamics \eqref{eq:ips}, and $\eta^N_n$ is the empirical measure over the $N$ particles. Let $p\in [1,\infty)$; we have
        \[
            \E[\|X^1_n\|^p]= \E[\M_p(\eta^N_n)] = \E[\M_p(\Phi_n[\eta^N_{n-1}])]
        \]where the equalities are understood in $[0,\infty]$.
    \end{lemma}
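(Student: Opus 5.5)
The plan is to prove the two equalities separately. Each one reduces to a simple structural fact: exchangeability for the first, and the conditional i.i.d. property from the Fact for the second. All quantities are nonnegative, so Tonelli's theorem justifies every interchange of expectation and integral or sum in $[0,\infty]$, and no integrability assumption is needed.

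\emph{First equality.} Since $\eta^N_n = m(\ovl{X}^N_n)$, we have
\[
\M_p(\eta^N_n)=\frac{1}{N}\sum_{i=1}^N \|X^i_n\|^p .
\]
Taking expectations (linearity is valid in $[0,\infty]$) gives $\E[\M_p(\eta^N_n)] = \frac1N\sum_i \E[\|X^i_n\|^p]$. The initial particles $X^1_0,\dots,X^N_0$ are i.i.d., hence exchangeable, so by the Fact the vector $(X^1_n,\dots,X^N_n)$ is exchangeable. In particular each $X^i_n$ has the same law as $X^1_n$, so every summand equals $\E[\|X^1_n\|^p]$, and the first equality follows.

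\emph{Second equality.} For $n\ge1$, condition on $\calF^N_{n-1}$. By the Fact, $X^1_n$ given $\calF^N_{n-1}$ has law $\Phi_n[\eta^N_{n-1}] = \eta^N_{n-1}K^{(n)}_{\eta^N_{n-1}}$. This also follows directly from \eqref{eq:ips} after noting that the kernel is applied to $X^1_{n-1}$, so I need to check this carefully. Conditional on $\calF^N_{n-1}$, the particle $X^1_n$ has law $K^{(n)}_{\eta^N_{n-1}}(X^1_{n-1},\bullet)$, not $\Phi_n[\eta^N_{n-1}]$. So the correct route is to average over the particle index. By the tower property,
\[
\E[\|X^1_n\|^p]=\frac1N\sum_{i=1}^N \E[\|X^i_n\|^p]=\E\Big[\frac1N\sum_{i=1}^N K^{(n)}_{\eta^N_{n-1}}\V_p(X^i_{n-1})\Big],
\]
where the first step is exchangeability again. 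The integrand equals $\eta^N_{n-1}K^{(n)}_{\eta^N_{n-1}}(\V_p) = \M_p(\Phi_n[\eta^N_{n-1}])$. This yields the second equality. If the Fact's phrase ``common law $\Phi_{n+1}[\eta^N_n]$'' is read as referring to a uniformly chosen particle, it says the same thing.

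\textbf{Main obstacle.} The only real subtlety is the one just handled: not conflating the conditional law of a specific particle with $\Phi_n[\eta^N_{n-1}]$. The two can be identified only after symmetrizing over $i$, which is where exchangeability is used a second time. Measurability of $(x,\eta)\mapsto K^{(n)}_\eta\V_p(x)$ is implicitly assumed, since the dynamics \eqref{eq:ips} must be well defined. Nonnegativity makes everything valid even when the moments are infinite.
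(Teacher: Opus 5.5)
Your proof is correct and follows the paper's argument. The first equality comes from exchangeability. For the second, you symmetrize over $i$ by exchangeability, apply the tower property with $\E[\|X^i_n\|^p\mid\calF^N_{n-1}]=K^{(n)}_{\eta^N_{n-1}}\V_p(X^i_{n-1})$, and recognize $\frac1N\sum_i K^{(n)}_{\eta^N_{n-1}}\V_p(X^i_{n-1})=\Phi_n[\eta^N_{n-1}](\V_p)$.

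Your mid-argument correction is also right: given $\calF^N_{n-1}$ the particles are conditionally independent but not identically distributed. So the identification with $\Phi_n[\eta^N_{n-1}]$ holds only after averaging over the particle index, which is exactly what the paper's computation does.
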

    \begin{proof}
        On one hand by exchangeability
        \begin{align*}
            \E[\M_p(\eta^N_n)] = \frac{1}{N}\sum_{i=1}^N \E[\|X^i_n\|^p] = \E[\|X^1_n\|^p].
        \end{align*}
        On the other, using $\calF^N_n:=\sigma(X^1_n,\dots,X^N_n)$ and $X^i_n \sim K^{(n)}_{\eta^N_{n-1}}(X^i_{n-1},\bullet)$, we get
        \begin{align*}
            \E[\|X^1_n\|^p]&=\frac{1}{N}\sum_{i=1}^n \E[\|X^i_n\|^p]\\
            &=\frac{1}{N}\sum_{i=1}^n \E[\E[\|X^i_n\|^p|\calF^N_{n-1}]]\\
            &= \frac{1}{N}\sum_{i=1}^N \E[K^{(n)}_{\eta^N_{n-1}}\V_p(X^i_{n-1})]\\
            &= \E\left[\eta^N_{n-1} K^{(n)}_{\eta^N_{n-1}}(\V_p)\right]\\
            &= \E[\Phi_n[\eta^N_{n-1}](\V_p)].
        \end{align*}
    \end{proof}

    \paragraph{A Quantitative Empirical Measure Estimate}
    We are ready to prove the main results, which are quantitative empirical measure estimates for \eqref{eq:ips} that directly result in propagation of chaos; the first result is Proposition~\ref{prop:emp_main}. The proof is not particularly complicated and essentially generalizes the argument from the proof of \cite[Theorem~8.3.3]{del2004feynman} by leveraging the favourable properties of Wasserstein distances for empirical measures and estimates from \cite{fournier2015rate}.

    \begin{proposition}\label{prop:emp_main}
        Suppose that Assumption~\ref{assump:R1} holds, i.e., there is $\tau_1(K^{(n)}_\bullet)\in(0,\infty)$ with
        \[
            \W_1(\mu K^{(n)}_\mu, \nu K^{(n)}_\nu)\leq \tau_1(K^{(n)}_\bullet)\W_1(\mu,\nu) ~\forall \mu,\nu\in \calP_1(\R^d).
        \]Suppose that $\eta_n$ follows the mean-field dynamics~\eqref{eq:mean_field} and $(X^1_n,\dots, X^N_n)$ follows the associated particle dynamics~\eqref{eq:ips}. Finally, assume there exist constants $M^{(p)}_k >0$ constants satisfying
        \[
            \E[\|X^1_k\|^p]\leq M^{(p)}_k
        \]
        for some $p>1$ and $k=0,\dots,n$. Then for every $n=1,2,\dots,$ we have
        \[
            \E[\W_1(\eta^{N}_n, \eta_n)]\leq C_{p, d}\cdot \frac{1}{N^{1/d}}\sum_{k=0}^n \left(M^{(p)}_k\right)^{1/p}\tau_1(\Phi_{k,n}).
        \]
    \end{proposition}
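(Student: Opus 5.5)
The argument is a telescoping decomposition along the nonlinear semigroup, in the spirit of \cite[Theorem~8.3.3]{del2004feynman}. Since $\eta_n=\Phi_{0,n}[\eta_0]$ and $\Phi_{n,n}$ is the identity, I write
\[
\eta^N_n-\eta_n \;``="\; \sum_{k=0}^{n}\Big(\Phi_{k,n}[\eta^N_k]-\Phi_{k-1,n}[\eta^N_{k-1}]\Big),
\]
with the convention $\Phi_{-1,n}[\eta^N_{-1}]:=\Phi_{0,n}[\eta_0]=\eta_n$. Rather than use this as a signed identity, I take the chain of measures $\Phi_{k,n}[\eta^N_k]$, $k=0,\dots,n$, and apply the triangle inequality for $\W_1$:
\[
\W_1(\eta^N_n,\eta_n)\le \W_1\big(\Phi_{0,n}[\eta^N_0],\Phi_{0,n}[\eta_0]\big)+\sum_{k=1}^n \W_1\big(\Phi_{k,n}[\eta^N_k],\Phi_{k,n}[\Phi_k[\eta^N_{k-1}]]\big),
\]
using $\Phi_{k-1,n}=\Phi_{k,n}\circ\Phi_k$.

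Next I use Lipschitzness of the composed maps. Under Assumption~\ref{assump:R1}, each $\Phi_j$ is $\tau_1(\Phi_j)$-Lipschitz on $\calP_1(\R^d)$, so $\Phi_{k,n}$ is Lipschitz with constant $\tau_1(\Phi_{k,n})\le\prod_{j=k+1}^n\tau_1(\Phi_j)$. Here $\tau_1(\Phi_{k,n})$ is defined as the best Lipschitz constant of the composed map, with $\tau_1(\Phi_{n,n})=1$. Assumption~\ref{assump:global}(A2), together with finiteness of the moments, ensures all measures involved lie in $\calP_1$. This gives
\[
\W_1(\eta^N_n,\eta_n)\le \tau_1(\Phi_{0,n})\W_1(\eta^N_0,\eta_0)+\sum_{k=1}^n\tau_1(\Phi_{k,n})\,\W_1\big(\eta^N_k,\Phi_k[\eta^N_{k-1}]\big).
\]

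The key step is bounding the local sampling errors in expectation. For $k=0$, $\eta^N_0$ is the empirical measure of $N$ i.i.d. samples from $\eta_0$, so the $r=1$ form of \eqref{eq:fournier} gives $\E\W_1(\eta^N_0,\eta_0)\le C_{p,d}\M_p(\eta_0)^{1/p}N^{-1/d}$, and $\M_p(\eta_0)=\E\|X^1_0\|^p\le M^{(p)}_0$. For $k\ge1$, the Fact says that, conditionally on $\calF^N_{k-1}$, the variables $X^1_k,\dots,X^N_k$ are i.i.d. with law $\Phi_k[\eta^N_{k-1}]$. Applying \eqref{eq:fournier} conditionally therefore gives
\[
\E\big[\W_1(\eta^N_k,\Phi_k[\eta^N_{k-1}])\,\big|\,\calF^N_{k-1}\big]\le C_{p,d}\,\M_p(\Phi_k[\eta^N_{k-1}])^{1/p}N^{-1/d}.
\]
I then take the full expectation, apply Jensen ($x\mapsto x^{1/p}$ is concave since $p>1$), and use Lemma~\ref{lem:emp_mom} to get $\E\M_p(\Phi_k[\eta^N_{k-1}])=\E\|X^1_k\|^p\le M^{(p)}_k$. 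Summing over $k$ yields the claim.

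I expect the main obstacle to be the conditional use of \cite{fournier2015rate}. The constant $C_{p,d}$ must be uniform in the underlying measure, which it is. I also need a measurable version of the conditional law, which is available because the conditional law is the explicit kernel expression $\Phi_k[\eta^N_{k-1}]$. Finally, on the event where $\M_p(\Phi_k[\eta^N_{k-1}])=\infty$ the bound is trivial, and that event has probability zero because of the moment assumption. Everything else is bookkeeping.
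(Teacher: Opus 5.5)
Your proposal is correct and follows the paper's proof step for step. It uses the same telescoping along $\Phi_{k,n}$, with the triangle inequality on the chain $\Phi_{k,n}[\eta^N_k]$ in place of the paper's supremum over $1$-Lipschitz test functions and Kantorovich duality, which is equivalent. It then uses the Lipschitz constants $\tau_1(\Phi_{k,n})$, a conditional application of \cite{fournier2015rate} given $\calF^N_{k-1}$, and Jensen with Lemma~\ref{lem:emp_mom}.

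One caveat applies equally to the paper's simplification of \eqref{eq:fournier}, so it is not specific to your argument. The pure $N^{-1/d}$ rate for $r=1$ holds only when $p>d/(d-1)$; for $1<p<d/(d-1)$ the $N^{-(p-1)/p}$ term dominates.
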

    \begin{proof}
        Let $f\in \lip_1(\R^d)$ be arbitrary and consider the decomposition, which comes from \cite[(8.23)]{del2004feynman},
        \begin{align*}
            \eta^N_n(f) - \eta_n(f) =& \Phi_{n,n}[\eta^N_n](f) - \Phi_{0,n}[\eta_0](f)\\
            =& \Phi_{n,n}[\eta^N_n](f) - \Phi_{n,n}[\Phi_{n-1,n}[\eta^N_{n-1}]](f)\\
            &+ \Phi_{n,n}[\Phi_{n-1,n}[\eta^N_{n-1}]](f) - \Phi_{n-1,n}[\Phi_{n-2,n-1}[\eta^N_{n-2}]](f)+\cdots\\
            &+ \Phi_{1,n}[\Phi_{0,1}[\eta^N_0]](f) - \Phi_{0,n}[\eta^N_{0}](f)\\
            &+\Phi_{0,n}[\eta^N_{0}](f) - \Phi_{0,n}[\eta_{0}](f)\\
            =&\sum_{k=0}^n \Phi_{k,n}[\eta^N_k](f) - \Phi_{k,n}[\Phi_{k-1, k}[\eta^N_{k-1}]](f)
        \end{align*}
        where $\Phi_{-1, 0}[\eta^N_{-1}]:=\eta_0$. Taking the supremum over all such $f$ and using Kantorovich duality gives 
        \[
            \W_1(\eta^N_n, \eta_n)\leq \sum_{k=0}^n \W_1(\Phi_{k,n}[\eta^N_k], \Phi_{k,n}[\Phi_k[\eta^{N}_{k-1}]]).
        \]
        As noted, $X^i_n$ are conditionally i.i.d. given $\calF^N_{n-1}:=\sigma(X^1_{n-1}, \dots, X^N_{n-1})$ with common law $\Phi_n[\eta^N_{n-1}]$. Thus taking expectations and using Lipschitz regularity of $\Phi$
        \begin{align*}
            \E[\W_1(\eta^N_n, \eta_n)]&\leq \sum_{k=0}^n \E[\W_1(\Phi_{k,n}[\eta^N_k], \Phi_{k,n}[\Phi_k[\eta^{N}_{k-1}]])]\\
            &\leq\sum_{k=0}^n \tau_1(\Phi_{k,n})\E[\W_1(\eta^N_k, \Phi_k[\eta^N_{k-1}])]\\
            &=\sum_{k=0}^n  \tau_1(\Phi_{k,n})\E[\E[\W_1(\eta^N_k, \Phi_k[\eta^N_{k-1}])|\calF^N_{k-1}]]\\
            &\leq\sum_{k=0}^n \tau_1(\Phi_{k,n})C_{p, d}\cdot \E[\M_p(\Phi_k[\eta^N_{k-1}])^{1/p}]\frac{1}{N^{1/d}}
        \end{align*}
        using \cite[Theorem~1]{fournier2015rate}. 
        Using $\E\left[\M_p(\Phi_k[\eta^N_{k-1}])^{1/p}\right]\leq \left(M^{(p)}_k\right)^{1/p}$ by Jensen's inequality and Lemma~\ref{lem:emp_mom}, we get
        \[
            \E[\W_1(\eta^{N}_n, \eta_n)]\leq C_{p,d}\cdot \frac{1}{N^{1/d}}\sum_{k=0}^n \left(M^{(p)}_k\right)^{1/p}\tau_1(\Phi_{k,n}).
        \]
    \end{proof}

    These ingredients are by now fairly standard for propagation of chaos; various forms of regularity and moment control appear in many papers studying propagation of chaos in discrete and continuous time. However, our result leverages these elements in a general setting without the need for bespoke coupling or combinatorial arguments, which allows for broad applicability. Moreover, as we will see in Sections~\ref{sec:mv} \& \ref{sec:fkm}, Lipschitz regularity can often be determined \emph{algebraically} from the structure of the kernels $K^{(n)}_\bullet$; therefore this approach transforms a \emph{probabilistic, dyanmic} question into an \emph{algebraic} one which is often much easier to answer.\nnewline

    Now we prove a similar result that holds in the more general setting of a one-sided local Lipschitz estimate. This generalization will become very useful when studying kernels with jumps, e.g., in Section~\ref{sec:fkm}. 
    \begin{proposition}\label{prop:emp_main_oneside}
        Suppose that Assumption~\ref{assump:R3} holds, i.e., for every $\mu,\nu\in \calP_1(\R^d)$ we have the one-sided local Lipschitz estimate
        \[
            \W_1(\mu K^{(n)}_\mu, \nu K^{(n)}_\nu)\leq \tau_1(K^{(n)}_\bullet)[\nu]\cdot \W_1(\mu,\nu),~\tau_1(K^{(n)}_\bullet)[\nu]\in(0,\infty).
        \]Suppose that $\eta_n$ follows the mean-field dynamics~\eqref{eq:mean_field} and $(X^1_n,\dots, X^N_n)$ follows the associated particle dynamics~\eqref{eq:ips}. Finally, assume there exist constants $M^{(p)}_k >0$ satisfying
        \[
            \E[\|X^1_k\|^p]\leq M^{(p)}_k
        \]
        for some $p>1$ and $k=0,\dots,n$. Then for every $n=1,2,\dots,$ we have
        \[
            \E[\W_1(\eta^{N}_n, \eta_n)]\leq C_{p, d}\frac{1}{N^{1/d}}\sum_{k=0}^n  \left(M^{(p)}_k\right)^{1/p}\prod_{j=k+1}^n \tau_1(\Phi_j)[\eta_{j-1}].
        \]recalling that $\Phi_j[\mu]:=\mu K^{(j)}_\mu$.
    \end{proposition}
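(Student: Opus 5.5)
The proof will follow the telescoping argument of Proposition~\ref{prop:emp_main}, with one change: the one-sided local estimate is always applied with the deterministic mean-field measure in the ``anchored'' slot $\nu$. The constants $\tau_1(\Phi_j)[\eta_{j-1}]$ then depend only on the deterministic flow, not on the random empirical measures. The first step is to pick the telescoping in the ``backward'' direction, so that the measure being propagated through $\Phi_{k,n}$ alongside the particle-derived measure is the true flow. Concretely, for $k=0,\dots,n$ set $\mu_k:=\Phi_{k,n}[\eta^N_k]$, and define $\Phi_{-1,0}[\eta^N_{-1}]:=\eta_0$ as before. For $f\in\lip_1(\R^d)$ write
\[
  \eta^N_n(f)-\eta_n(f)=\sum_{k=0}^n \Phi_{k,n}[\eta^N_k](f)-\Phi_{k,n}[\Phi_k[\eta^N_{k-1}]](f).
\]
Taking the supremum over $f$ gives $\W_1(\eta^N_n,\eta_n)\leq\sum_k \W_1(\Phi_{k,n}[\eta^N_k],\Phi_{k,n}[\Phi_k[\eta^N_{k-1}]])$.

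The difficulty is that neither argument in these summands is the mean-field measure, so \ref{assump:R3} would yield constants evaluated at random measures. To avoid this, I would bound each summand by the triangle inequality through $\eta_n=\Phi_{k,n}[\eta_k]$:
\[
\W_1(\Phi_{k,n}[\eta^N_k],\Phi_{k,n}[\eta_k])+\W_1(\Phi_{k,n}[\Phi_k[\eta^N_{k-1}]],\Phi_{k,n}[\eta_k]).
\]
In each term the second argument is now on the mean-field flow. Iterating \ref{assump:R3} $n-k$ times, with $\nu$ successively equal to $\eta_k,\eta_{k+1},\dots,\eta_{n-1}$, gives for any $\mu$
\[
\W_1(\Phi_{k,n}[\mu],\Phi_{k,n}[\eta_k])\leq\prod_{j=k+1}^n\tau_1(\Phi_j)[\eta_{j-1}]\,\W_1(\mu,\eta_k).
\]
However, this route produces $\W_1(\eta^N_k,\eta_k)$ terms, which leads to a recursion rather than the clean bound. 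I would therefore switch to a cleaner scheme: a direct one-step recursion. Write
\[
\W_1(\eta^N_n,\eta_n)\leq\W_1(\eta^N_n,\Phi_n[\eta^N_{n-1}])+\W_1(\Phi_n[\eta^N_{n-1}],\Phi_n[\eta_{n-1}]),
\]
and bound the last term by $\tau_1(\Phi_n)[\eta_{n-1}]\,\W_1(\eta^N_{n-1},\eta_{n-1})$ via \ref{assump:R3} with $\nu=\eta_{n-1}$. Set $a_n:=\E[\W_1(\eta^N_n,\eta_n)]$ and $\varepsilon_k:=\E[\W_1(\eta^N_k,\Phi_k[\eta^N_{k-1}])]$. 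Taking expectations gives $a_n\leq \varepsilon_n+\tau_1(\Phi_n)[\eta_{n-1}]a_{n-1}$. The multiplier is deterministic, so it passes through the expectation. Unrolling yields
\[
a_n\leq\sum_{k=0}^n\varepsilon_k\prod_{j=k+1}^n\tau_1(\Phi_j)[\eta_{j-1}],
\]
with $\varepsilon_0:=\E[\W_1(\eta^N_0,\eta_0)]$.

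It remains to bound $\varepsilon_k$ exactly as in Proposition~\ref{prop:emp_main}. By the Fact, conditionally on $\calF^N_{k-1}$ the particles are i.i.d.\ with law $\Phi_k[\eta^N_{k-1}]$. The estimate \eqref{eq:fournier} with $r=1$ then gives $\varepsilon_k\leq C_{p,d}N^{-1/d}\E[\M_p(\Phi_k[\eta^N_{k-1}])^{1/p}]$. Jensen's inequality together with Lemma~\ref{lem:emp_mom} bounds the expectation by $(M^{(p)}_k)^{1/p}$. The case $k=0$ is direct, since the initial particles are i.i.d.\ from $\eta_0$. Substituting gives the claimed bound.

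The main obstacle is the first point: ensuring the local constant is always evaluated at the deterministic $\eta_{j-1}$. This forces the one-step recursion, anchored at the mean-field measure, in place of the semigroup telescoping. It also requires the finiteness and measurability needed to take conditional expectations, which are guaranteed by the moment assumption and Assumption~\ref{assump:global}.
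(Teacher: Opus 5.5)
Your proposal is correct and, once you discard the telescoping detour, it is exactly the paper's argument. The paper uses the same one-step recursion centered at $\Phi_n[\eta^N_{n-1}]$, applies Assumption~\ref{assump:R3} with the deterministic anchor $\nu=\eta_{n-1}$, bounds the fluctuation term by the conditional Fournier--Guillin estimate followed by Jensen and Lemma~\ref{lem:emp_mom}, and then unrolls the recursion.
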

    \begin{proof}
        Since we know that each $X^i_n$ is conditionally i.i.d. given $\calF^N_{n-1}:=\sigma(X^1_{n-1}, \dots,X^N_{n-1})$ with common law $\Phi_n[\eta^N_{n-1}]$, we can use this as a centering term to obtain the fundamental recursion
        \begin{align*}
            \E[\W_1(\eta^{N}_n, \eta_n)]&\leq \E[\W_1(\eta^{N}_n, \Phi_n[\eta^N_{n-1}])] + \E[\W_1(\Phi_n[\eta^N_{n-1}], \eta_n)]\\
            &\leq \E[\W_1(\eta^{N}_n, \Phi_n[\eta^N_{n-1}])] + \tau_1(\Phi_n)[\eta_{n-1}]\cdot \E[\W_1(\eta^N_{n-1}, \eta_{n-1})]\\
            &=\E[\E[\W_1(\eta^{N}_n, \Phi_n[\eta^N_{n-1}])|\calF^N_{n-1}]] + \tau_1(\Phi_n)[\eta_{n-1}]\cdot \E[\W_1(\eta^N_{n-1}, \eta_{n-1})]\\
            &\leq C_{p,d}\frac{1}{N^{1/d}}\E[\M_p(\Phi_n[\eta^N_{n-1}])^{1/p}] + \tau_1(\Phi_n)[\eta_{n-1}]\cdot \E[\W_1(\eta^N_{n-1}, \eta_{n-1})]
        \end{align*}
        by \cite[Theorem~1]{fournier2015rate}. Using Jensen's inequality to get $\E[\M_p(\Phi_n[\eta^N_{n-1}])^{1/p}]\leq (M^{(p)}_n)^{1/p}$ gives
        \[
            \E[\W_1(\eta^{N}_n, \eta_n)]\leq C_{p,d}\frac{1}{N^{1/d}}(M^{(p)}_n)^{1/p} + \tau_1(\Phi_n)[\eta_{n-1}]\cdot \E[\W_1(\eta^N_{n-1}, \eta_{n-1})]
        \]and unrolling this sum yields the result.
    \end{proof}
    \begin{remark}
        This result uses a simpler decomposition than Proposition~\ref{prop:emp_main} but is slightly weaker since in general
        \[
            \tau_1(\Phi_{k,n})\leq \prod_{j=k+1}^n \tau_1(\Phi_j).
        \]Also, note that the one-sidedness of the estimate is a useful feature since it allows us to estimate the regularity by a deterministic constant $\tau(\Phi_n)[\eta_{n-1}]$ rather than a random one $\tau(\Phi_n)[\eta^N_{n-1}]$, which is important for extracting a simple recursion. Furthermore, the mean field system $\eta_n$ is often much easier to work with, and indeed we can see that $\tau_1(\Phi_n)[\eta_{n-1}]=\tau_1(\Phi_n)[\Phi_{0, n-1}[\eta_0]]$ has a closed form expression in terms of the initial conditions and the kernels $K^{(k)}_\bullet$.
    \end{remark}

    \paragraph{Propagation of Chaos}\label{subsec:poc}
    The first propagation of chaos result holds in the general setting of Assumption~\ref{assump:R1} (and also Assumption~\ref{assump:R3} although for simplicity we only prove it for the former). The result uses the following tensorization lemma, the proof of which we defer to Appendix~\ref{app:auxiliary_results} as it requires some combinatorial machinery that is not used anywhere else in this work.
    \begin{restatable}{lemma}{TensorizationLemma}\label{lem:tensorization}
        Let $X:=(X^1,\dots,X^N)$ be exchangeable random variables with $\law(X)\in\calP_1((\R^d)^{\otimes N})$ and let $\nu\in\calP_1(\R^d)$. For any $q\in \{1,\dots,N\}$, let $\mu^{N,q}:=\law(X^1,\dots,X^q)$; then
        \[
            \W_1(\mu^{N,q},\nu^{\otimes q})\leq q\E[\W_1(m(X), \nu)] + 2\frac{q^3}{N}\M_1\left(\mu^{N,1}\right).
        \]
    \end{restatable}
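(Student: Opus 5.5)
We compare $\mu^{N,q}$ with $\nu^{\otimes q}$ through an intermediate law: the law of $q$ particles drawn \emph{with replacement} from the empirical measure $m(X)$. Concretely, let $I_1,\dots,I_q$ be i.i.d. uniform on $\{1,\dots,N\}$, independent of $X$. Set $Y:=(X^{I_1},\dots,X^{I_q})$ and $\rho^{N,q}:=\law(Y)=\E[m(X)^{\otimes q}]$. The triangle inequality then gives
\[
\W_1(\mu^{N,q},\nu^{\otimes q})\le \W_1(\mu^{N,q},\rho^{N,q})+\W_1(\rho^{N,q},\nu^{\otimes q}).
\]
We bound the two terms separately. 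The first should produce the $q^3/N$ correction and the second the factor $q\,\E[\W_1(m(X),\nu)]$.

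For the second term, convexity of $\W_1$ under mixtures gives $\W_1(\E[m(X)^{\otimes q}],\nu^{\otimes q})\le \E[\W_1(m(X)^{\otimes q},\nu^{\otimes q})]$. This holds by gluing, conditionally on $X$, an optimal coupling of $m(X)^{\otimes q}$ and $\nu^{\otimes q}$, using a measurable selection, or directly via Kantorovich duality, since a supremum of expectations is at most the expectation of the supremum. For product measures, take the product of $q$ optimal couplings of $m(X)$ and $\nu$. Since the $\ell_2$ norm is bounded by the $\ell_1$ norm, $\|\bar x-\bar y\|\le\sum_i\|x^i-y^i\|$, so
\[
\W_1(m(X)^{\otimes q},\nu^{\otimes q})\le q\,\W_1(m(X),\nu).
\]
Taking expectations yields the term $q\,\E[\W_1(m(X),\nu)]$.

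For the first term, the key observation is that, conditionally on the event $D$ that $I_1,\dots,I_q$ are pairwise distinct, exchangeability gives $\law(Y\mid D)=\mu^{N,q}$. We therefore couple $Y$ with $\tilde Y:=(X^{J_1},\dots,X^{J_q})$, where $(J_1,\dots,J_q)$ is a uniformly random injective tuple. On $D$ we set $J=I$. On $D^c$ we choose $J$ from the appropriate conditional law so that its marginal is correct; the standard coupling of sampling with and without replacement allows this, for instance by resampling only the colliding indices. The cost is then
\[
\E\big[\|Y-\tilde Y\|\mathbf 1_{D^c}\big]\le \sum_{i=1}^q \E\big[(\|X^{I_i}\|+\|X^{J_i}\|)\mathbf 1_{D^c}\big].
\]
Here I expect the main obstacle: $\mathbf 1_{D^c}$ is correlated with which indices are chosen, so the expectation does not factor naively. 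The way around it is that the indices are independent of $X$. Hence $\E[\|X^{I_i}\|\mathbf 1_{D^c}]=\E[\mathbf 1_{D^c}\,\E[\|X^{I_i}\|\mid I]]=\P(D^c)\M_1(\mu^{N,1})$ by exchangeability, and likewise for $J$. The union bound gives $\P(D^c)\le \binom q2/N\le q^2/(2N)$. Therefore the first term is at most $q\cdot 2\cdot\frac{q^2}{2N}\M_1(\mu^{N,1})\le 2\frac{q^3}{N}\M_1(\mu^{N,1})$, which matches the claimed constant with room to spare.

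Combining the two bounds gives the lemma. The only genuinely combinatorial point is to construct the coupling of $(I,J)$ so that $J$ is uniform over injective tuples, $J=I$ on $D$, and $(I,J)$ is independent of $X$. I would build it explicitly by a sequential draw: when $I_k$ repeats an earlier index, redraw $J_k$ uniformly from the unused indices, and check the marginal of $J$ by symmetry.
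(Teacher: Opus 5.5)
Your proof is correct and follows essentially the same route as the paper. Both arguments pass through the law of $q$ draws with replacement from $m(X)$, bound its distance to $\nu^{\otimes q}$ by $q\,\E[\W_1(m(X),\nu)]$ via convexity and product couplings, and control the with-/without-replacement discrepancy by a collision probability of order $q^2/N$ times a first moment of order $q\,\M_1(\mu^{N,1})$. The one difference is that you build the coupling explicitly from random index tuples independent of $X$, instead of citing Del Moral's transport identity $m(X)^{\otimes q}=m(X)^{\odot q}R^{(q)}_N$. This turns the paper's symmetry claim about $\E[\M_1(m(X)^{\odot q}\tilde{R}^{N}_q)]$ into a clean conditioning argument; that claim holds in general only as the upper bound $q\,\M_1(\mu^{N,1})$, which is all that is needed. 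Via the union bound $\binom{q}{2}/N$, it also gives constant $1$ instead of $2$.
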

    \begin{theorem}\label{thm:poc_general}
        Suppose that Proposition~\ref{prop:emp_main} is satisfied. Then for $q\in \{1,\dots,N\}$
        \[
            \W_1(\eta^{N,q}_n,\eta^{\otimes q}_n)\leq  C_{p, d}\frac{q}{N^{1/d}}\sum_{k=0}^n \left(M^{(p)}_k\right)^{1/p}\tau_1(\Phi_{k,n})  + 2\frac{q^3}{N}(M^{(p)}_n)^{1/p}.
        \]
    \end{theorem}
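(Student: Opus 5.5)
The plan is to combine the tensorization Lemma~\ref{lem:tensorization} with the empirical measure estimate of Proposition~\ref{prop:emp_main}, and then control the residual first-moment term by the $p$-th moment bound.

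\textbf{Step 1: tensorize.} By the Fact, the particle system $(X^1_n,\dots,X^N_n)$ is exchangeable, since the initial conditions are i.i.d. and hence exchangeable. Its law lies in $\calP_1((\R^d)^{\otimes N})$: each $\E[\|X^i_n\|]\le (M^{(p)}_n)^{1/p}<\infty$. I therefore apply Lemma~\ref{lem:tensorization} with $X=\ovl{X}^N_n$, $\nu=\eta_n$, and $\mu^{N,q}=\eta^{N,q}_n$. Note that $\eta_n\in\calP_1(\R^d)$, either by Assumption~\ref{assump:global} or because $\W_1$-Lipschitzness of each $\Phi_k$ keeps the flow in $\calP_1$ when $\eta_0\in\calP_1$. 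Since $m(\ovl{X}^N_n)=\eta^N_n$, this gives
\[
\W_1(\eta^{N,q}_n,\eta_n^{\otimes q})\le q\,\E[\W_1(\eta^N_n,\eta_n)] + 2\frac{q^3}{N}\M_1(\eta^{N,1}_n).
\]

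\textbf{Step 2: bound the first term.} I substitute the bound from Proposition~\ref{prop:emp_main} directly. This yields
\[
C_{p,d}\frac{q}{N^{1/d}}\sum_{k=0}^n (M^{(p)}_k)^{1/p}\tau_1(\Phi_{k,n}).
\]

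\textbf{Step 3: bound the moment term.} Here $\M_1(\eta^{N,1}_n)=\E[\|X^1_n\|]$. By Jensen's (or Lyapunov's) inequality, using $p>1$,
\[
\E[\|X^1_n\|]\le \E[\|X^1_n\|^p]^{1/p}\le (M^{(p)}_n)^{1/p}.
\]
Combining the three steps gives the stated bound.

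\textbf{Main obstacle.} The substantive difficulty lies in Lemma~\ref{lem:tensorization}, which is taken as given. Within this proof, the only point requiring care is checking the integrability hypotheses of the lemma: that the joint law is in $\calP_1$ and that $\eta_n\in\calP_1$. Both follow from the moment assumption and Assumption~\ref{assump:global}.
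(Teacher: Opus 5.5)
Your proposal is correct and follows the paper's own proof exactly. The paper likewise applies Lemma~\ref{lem:tensorization} to the exchangeable particle system with $\nu=\eta_n$, inserts Proposition~\ref{prop:emp_main} for $q\,\E[\W_1(\eta^N_n,\eta_n)]$, and bounds $\M_1(\eta^{N,1}_n)\le \M_p(\eta^{N,1}_n)^{1/p}\le (M^{(p)}_n)^{1/p}$. Your explicit integrability checks on $\law(\ovl{X}^N_n)$ and $\eta_n$ are a harmless addition that the paper leaves implicit.
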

    \begin{proof}
        This is an immediate consequence of Lemma~\ref{lem:tensorization}, plus the fact that $\M_1(\eta^{N,1}_n)\leq \M_p(\eta^{N,1}_n)^{1/p}$.
    \end{proof}

    Theorem~\ref{thm:poc_general} holds in a general setting, but implies a dependence on $q$ which is\footnote{For simplicity, throughout the text we will use $f(n)=\calO(g(n))$ to mean ``$f(n)\leq C g(n)~\forall n$ for some $C>0$'', i.e., \emph{not} in the asymptotic sense.} $\calO(q^3)$. Under slightly stronger regularity assumptions, we can improve this to $\calO(q)$ by using the Markov chain equivalent of the classical synchronous coupling argument from \cite{sznitman1991topics}.

    \begin{lemma}\label{lem:tau}
        Suppose that $K_\bullet$ satisfies Assumption~\ref{assump:R2}. Then $\tau_1(K_\bullet) \leq c + C$.
    \end{lemma}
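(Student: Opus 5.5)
We prove $\W_1(\mu K_\mu, \nu K_\nu) \leq (c+C)\W_1(\mu,\nu)$ by building an explicit coupling of the two one-step laws and bounding its cost with Assumption~\ref{assump:R2}.

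\textbf{Step 1: glue couplings.} Fix $\mu,\nu\in\calP_1(\R^d)$ and let $\gamma\in\calC(\mu,\nu)$ be an optimal coupling for $\W_1$; optimal couplings exist for $\W_1$ on $\R^d$. For each pair $(x,y)$, take a coupling $\pi_{x,y}\in\calC(K_\mu(x,\bullet),K_\nu(y,\bullet))$ which is optimal, or $\epsilon$-optimal, for $\W_1$. We must choose $\pi_{x,y}$ measurably in $(x,y)$. This is handled by the standard measurable selection of optimal plans (e.g.\ Villani, Corollary~5.22). Alternatively, we can avoid selection altogether by working through Kantorovich duality, as in Step~4.

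Define the measure
\[
\Gamma(\dd x',\dd y'):=\int \gamma(\dd x,\dd y)\,\pi_{x,y}(\dd x',\dd y').
\]
Its first marginal is $\int\mu(\dd x)K_\mu(x,\dd x')=\mu K_\mu$, and its second marginal is $\nu K_\nu$. So $\Gamma$ is a coupling of $\mu K_\mu$ and $\nu K_\nu$.

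\textbf{Step 2: bound the cost.} Using $\Gamma$ and then Assumption~\ref{assump:R2} pointwise,
\[
\W_1(\mu K_\mu,\nu K_\nu)\leq\int\gamma(\dd x,\dd y)\,\W_1(K_\mu(x,\bullet),K_\nu(y,\bullet))\leq\int\gamma(\dd x,\dd y)\bigl(c\|x-y\|+C\W_1(\mu,\nu)\bigr).
\]
Since $\gamma$ is optimal and a probability measure, the right-hand side equals $c\,\W_1(\mu,\nu)+C\,\W_1(\mu,\nu)$. If $\epsilon$-optimal plans were used in Step~1, we let $\epsilon\to0$.

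\textbf{Step 3: conclude.} Dividing by $\W_1(\mu,\nu)$ for $\mu\neq\nu$ and taking the supremum gives $\tau_1(K_\bullet)\leq c+C$.

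\textbf{Step 4: the only technical point, and how to avoid it.} The one delicate step is the measurability of $(x,y)\mapsto\pi_{x,y}$. A cleaner route uses duality instead. Take any $f\in\lip_1$ and write
\[
\mu K_\mu(f)-\nu K_\nu(f)=\int\gamma(\dd x,\dd y)\bigl(K_\mu f(x)-K_\nu f(y)\bigr).
\]
For each fixed pair $(x,y)$, Kantorovich duality gives $|K_\mu f(x)-K_\nu f(y)|\leq\W_1(K_\mu(x,\bullet),K_\nu(y,\bullet))$. This integrand is measurable, because $K_\mu f$ and $K_\nu f$ are measurable functions. Applying Assumption~\ref{assump:R2}, integrating against $\gamma$, and then taking the supremum over $f\in\lip_1$ gives the same bound with no selection argument. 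Assumption~\ref{assump:global} guarantees that $\mu K_\mu$ and $\nu K_\nu$ both lie in $\calP_1(\R^d)$, so all these integrals are finite.
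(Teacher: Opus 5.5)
Your proof is correct, but it is organized differently from the paper's.

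The paper inserts the intermediate measure $\mu K_\nu$ and uses the triangle inequality:
\begin{itemize}
\item The term $\W_1(\mu K_\mu,\mu K_\nu)$ is bounded by $\int \W_1(K_\mu(x,\bullet),K_\nu(x,\bullet))\,\mu(\dd x)\leq C\,\W_1(\mu,\nu)$. This uses Assumption~\ref{assump:R2} only on the diagonal $x=y$.
\item The term $\W_1(\mu K_\nu,\nu K_\nu)$ is bounded by $c\,\W_1(\mu,\nu)$ via Proposition~\ref{prop:wass_contract}. That proposition identifies the $\W_1$-contraction coefficient of the frozen linear kernel $K_\nu$ with its pointwise Lipschitz constant.
\end{itemize}

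You instead glue once along an optimal plan $\gamma$ of $(\mu,\nu)$, or do a single duality computation in Step~4. This perturbs the spatial and measure arguments simultaneously and uses the joint form of Assumption~\ref{assump:R2} directly, with no intermediate measure.

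Your duality variant has a concrete advantage: it never needs a measurable choice of optimal kernel couplings. The paper's Proposition~\ref{prop:wass_contract} dispatches that point with an appeal to ``standard measurable selection arguments.'' Indeed, you can bound $|K_\mu f(x)-K_\nu f(y)|$ directly by $c\|x-y\|+C\W_1(\mu,\nu)$, which is obviously measurable in $(x,y)$.

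The paper's route, in exchange, is modular. It shows that $\tau_1(K_\bullet)$ is controlled by two separate partial Lipschitz constants: one in the measure variable at a fixed point, and the uniform spatial Lipschitz constant of each linear kernel $K_\nu$. It also reuses a general fact about linear kernels that appears elsewhere. The two formulations of the hypothesis are equivalent up to the triangle inequality, and both routes give $c+C$.
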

    \begin{proof}
        \begin{align*}
            \frac{\W_1(\mu K_\mu,\nu K_\nu)}{\W_1(\mu,\nu)}&\leq \frac{\W_1(\mu K_\mu, \mu K_\nu)}{\W_1(\mu,\nu)} + \frac{\W_1(\mu K_\nu, \nu K_\nu)}{\W_1(\mu,\nu)}\\
            &\leq \underbrace{\int \frac{\W_1(K_\mu(x, \bullet), K_\nu(x, \bullet))}{\W_1(\mu,\nu)}\mu(\dd x)}_{(1)} + \underbrace{\frac{\W_1(\mu K_\nu, \nu K_\nu)}{\W_1(\mu,\nu)}}_{(2)}
        \end{align*}
        $(1)\leq C$ since term with $x-y$ vanishes. For the second term, we use Proposition~\ref{prop:wass_contract} to get
        \begin{align*}
            (2)&\leq\sup_{x\neq y}\frac{\W_1(K_\nu(x, \bullet), K_\nu(y, \bullet))}{\|x - y\|}\leq c.
        \end{align*}
    \end{proof}

    For the sake of simplicity, we prove the result below for time-homogeneous kernels $K^{(n)}_\bullet = K_\bullet$, however the analogous result would hold in the time-inhomogeneous case as well.
    \begin{theorem}\label{thm:poc_good}
        Suppose that Proposition~\ref{prop:emp_main} is satisfied with $K_\bullet$ and that, in particular, $K_\bullet$ satisfies Assumption~\ref{assump:R2} with constants $c,C$. Then for every $n=1,2,\dots,$ and $q\in \{1.\dots, N\}$ we have
        \[
            \W_1(\eta^{N,q}_n, \eta_n^{\otimes q})\leq  C'_{p, d}\cdot \frac{q}{N^{1/d}}\sum_{k=0}^{n-1}c^{n-k-1}\sum_{j=0}^k \left(M^{(p)}_j\right)^{1/p}(c+C)^{k-j}.
        \]
    \end{theorem}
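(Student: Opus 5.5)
We will prove the estimate by a synchronous coupling between the particle system and $q$ independent copies of the mean-field chain. Using Assumption~\ref{assump:R2}, we build on one probability space the particle system $(X^1_n,\dots,X^N_n)$ and auxiliary processes $(Y^1_n,\dots,Y^N_n)$ as follows. Set $Y^i_0 = X^i_0$. Given $\calF_{n}:=\sigma(X^j_k,Y^j_k: j\le N, k\le n)$, for each $i$ draw $(X^i_{n+1},Y^i_{n+1})$ from an optimal $\W_1$-coupling of $K_{\eta^N_n}(X^i_n,\bullet)$ and $K_{\eta_n}(Y^i_n,\bullet)$, independently across $i$. A measurable selection of optimal couplings is needed here; we take it from standard results on measurable optimal transport plans.

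By construction, $(X^i_n)$ has the law of \eqref{eq:ips}. Each $Y^i$ is a copy of the mean-field chain \eqref{eq:mean_field}, because its transition kernel $K_{\eta_n}(Y^i_n,\bullet)$ depends only on $Y^i_n$ and the deterministic $\eta_n$. The $Y^i$ are mutually independent, since the coupling draws are conditionally independent and $Y^i_{n+1}$ depends only on $Y^i_n$ and its own randomness. Hence $\law(Y^1_n,\dots,Y^q_n)=\eta_n^{\otimes q}$, and with the $\ell_2$ norm
\[
\W_1(\eta^{N,q}_n,\eta_n^{\otimes q})\le \E\Big[\Big(\sum_{i=1}^q\|X^i_n-Y^i_n\|^2\Big)^{1/2}\Big]\le \sum_{i=1}^q\E\|X^i_n-Y^i_n\| = q\,\E\|X^1_n-Y^1_n\|,
\]
where the last equality uses exchangeability of the pairs $(X^i,Y^i)$.

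Next we derive a recursion for $D_n:=\E\|X^1_n-Y^1_n\|$. Conditioning on $\calF_{n}$ and applying Assumption~\ref{assump:R2} gives
\[
D_{n+1}\le c\,D_n + C\,\E[\W_1(\eta^N_n,\eta_n)], \qquad D_0=0.
\]
Unrolling this recursion yields
\[
D_n\le C\sum_{k=0}^{n-1}c^{n-1-k}\,\E[\W_1(\eta^N_k,\eta_k)].
\]
To bound each $\E[\W_1(\eta^N_k,\eta_k)]$, we apply Proposition~\ref{prop:emp_main}, which gives $C_{p,d}N^{-1/d}\sum_{j=0}^k (M^{(p)}_j)^{1/p}\tau_1(\Phi_{j,k})$. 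By Lemma~\ref{lem:tau} and submultiplicativity of Lipschitz constants under composition, $\tau_1(\Phi_{j,k})\le (c+C)^{k-j}$. Substituting these bounds and absorbing $C$ into $C'_{p,d}:=C\,C_{p,d}$ gives exactly the claimed bound.

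The main obstacle is rigorously justifying the coupling construction: the measurability of the optimal coupling selection, and the verification that the $Y^i$ are genuinely i.i.d.\ mean-field chains even though they are coupled to interacting particles. Independence holds because $Y^i_{n+1}$ is determined by $Y^i_n$ and private randomness, whose conditional law given everything else is $K_{\eta_n}(Y^i_n,\bullet)$. The only other points to check are the conditional application of Assumption~\ref{assump:R2} with the random measure $\eta^N_n$, and the finiteness of all expectations, which is guaranteed by the moment bounds $M^{(p)}_k$.
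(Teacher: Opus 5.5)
Your proposal is correct and is essentially the paper's proof. It uses the same synchronous coupling through (measurably selected) optimal $\W_1$-couplings started from $Y^i_0=X^i_0$, the same recursion $D_{n+1}\le c\,D_n + C\,\E[\W_1(\eta^N_n,\eta_n)]$ unrolled from $D_0=0$, exchangeability to extract the factor $q$, and Proposition~\ref{prop:emp_main} with $\tau_1(\Phi_{j,k})\le (c+C)^{k-j}$ from Lemma~\ref{lem:tau}; your explicit check that the $Y^i$ are i.i.d.\ mean-field chains (via the conditional product structure given $\calF_n$) fills in a point the paper leaves implicit.
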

    \begin{proof}
        First, note that, under these assumptions and Lemma~\ref{lem:tau}, $\tau_1(K_\bullet)\leq c + C$, and Proposition~\ref{prop:emp_main} applies so that we have control on $\E[\W_1(\eta^N_n, \eta_n)]$ as above. We will combine this with a standard coupling approach to leverage this into a propagation of chaos estimate.\nnewline
        
        The Lipschitz assumption \ref{assump:R2} implies that there is a Markov coupling $\gamma_{x,y,\mu,\nu}\in\calC(K_\mu(x,\bullet), K_\nu(y,\bullet))$ such that
        \[
            \E_{W,Z\sim \gamma_{x,y,\mu,\nu}}[\|W - Z\|]= \W_1(K_\mu(x,\bullet), K_\nu(y,\bullet))\leq c\|x-y\| + C\W_1(\mu,\nu).
        \] We can use this to construct a coupling of $X^i_n$ and $Y^i_n\iid \eta_n$ for a given $i\in \{1,\dots,q\}$ as follows: let $(X^1_n,\dots,X^N_n)$ be the particles of \eqref{eq:ips} at time $n$ and to each $X^i_n$, we independently couple $Y^i_n$ by setting $X^i_0=Y^i_0$ (by assumption, their initial distributions are the same), and for $n>0$, we sample
        \[
            (X^i_n, Y^i_n)\sim \gamma_{X^i_{n-1}, Y^i_{n-1},\eta^N_{n-1},\eta_{n-1},}.
        \]By construction, $X^i_n\sim K_{\eta^N_{n-1}}(X^i_{n-1}, \bullet)$ and $Y^i_n\sim K_{\eta_{n-1}}(Y^i_{n-1},\bullet)$ so the marginals of this coupling follow the correct dynamics. For any $i=1,\dots, N$ we have
        \begin{align*}
            \E[\|X^i_n - Y^i_n\|] &\leq c\E[\|X^i_{n-1} - Y^i_{n-1}\|] + C\E[\W_1(\eta^N_{n-1}, \eta_{n-1})]\\
            &\leq c(c\E[\|X^i_{n-2} - Y^i_{n-2}\|] + C\E[\W_1(\eta^N_{n-2}, \eta_{n-2})]) + C\E[\W_1(\eta^N_{n-1}, \eta_{n-1})]\\
            &= c^2 \E[\|X^i_{n-2} - Y^i_{n-2}\|] + C(\E[\W_1(\eta^N_{n-1}, \eta_{n-1})] + c\E[\W_1(\eta^N_{n-2}, \eta_{n-2})])\\
            &\cdots\\
            &\leq c^n\E[\|X^i_0 - Y^i_0\|] + C\sum_{k=0}^{n-1} c^{n-k-1}\E[\W_1(\eta^N_k, \eta_k)]\\
            &=C\sum_{k=0}^{n-1} c^{n-k-1}\E[\W_1(\eta^N_k, \eta_k)].
        \end{align*}
        Therefore by exchangeability
        \begin{align*}
            \W_1(\eta^{N,q}_n, \eta^{\otimes q}_n)&\leq \sum_{i=1}^q \E[\|X^i_n - Y^i_n\|]\\
            &= q\E[\|X^1_n - Y^1_n\|]\\
            &\leq q C\sum_{k=0}^{n-1} c^{n-k-1}\E[\W_1(\eta^N_k, \eta_k)]
        \end{align*}
        and then plugging in
        \[
            \E[\W_1(\eta^{N}_k, \eta_k)]\leq C_{p, d}\frac{1}{N^{1/d}}\sum_{j=0}^k \left(M^{(p)}_j\right)^{1/p}(C+c)^{k-j}
        \]from Proposition~\ref{prop:emp_main}, we have
        \begin{align*}
            \W_1(\eta^{N,q}_n, \eta^{\otimes q}_n)&\leq C'_{p,d}\cdot \frac{q}{N^{1/d}}\sum_{k=0}^{n-1}c^{n-k-1}\sum_{j=0}^k\left(M^{(p)}_j\right)^{1/p}(C + c)^{k-j}
        \end{align*}
        as desired.
    \end{proof}
    \begin{remark}
        Note that, assuming $M^{(p)}_k\leq M^{(p)}_\star$, after some algebra this is still qualitatively ``geometric'' $\calO((C + c)^n)$ like the previous result.
    \end{remark}

    \subsection{Sharpness and Rates}\label{subsec:rates}

    Let us first note that Proposition~\ref{prop:emp_main} is sharp at the current level of generality, i.e., it is not hard to construct nontrivial, contractive nonlinear Markov kernels that attain the $\E[\W_1(\eta^N_n,\eta_n)]\gtrsim N^{-1/d}$ bound using arguments from \cite{fournier2015rate}. Indeed, the following simple nonlinear Markov kernel is instructive.
    \begin{proposition}\label{prop:sharp}
        Let $\calU_d$ be the uniform distribution on $[-1,1]^d$ and define
        \[
            K_\eta(x,\dd y) = \frac{1}{3}\delta_x(\dd y) + \frac{1}{3}\eta(\dd y) + \frac{1}{3}\calU_d.
        \]Then $K_\bullet$ is a strict contraction, and there is $\eta_0$ such that $\E[\|X^1_n\|^p]\leq M^{(p)}_\star<\infty~\forall p>1,~n\geq 0$ and 
        \[
            \E[\W_1(\eta^{N}_n, \eta_n)]\gtrsim N^{-1/d}
        \]uniformly in $n$.
    \end{proposition}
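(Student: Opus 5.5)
The plan is to reduce everything to the explicit mean-field map and pick $\eta_0=\calU_d$, which makes the mean-field flow stationary. The lower bound then becomes a deterministic quantization statement: no measure supported on $N$ points can be $\W_1$-close to $\calU_d$.

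\emph{Step 1 (contraction).} Compute $\Phi[\mu]=\mu K_\mu=\frac{1}{3}\mu+\frac{1}{3}\mu+\frac{1}{3}\calU_d=\frac{2}{3}\mu+\frac{1}{3}\calU_d$. For any coupling $\gamma\in\calC(\mu,\nu)$, the measure $\frac{2}{3}\gamma+\frac{1}{3}(\mathrm{id},\mathrm{id})_\#\calU_d$ is a coupling of $\Phi[\mu]$ and $\Phi[\nu]$. Optimizing over $\gamma$ gives $\W_1(\Phi[\mu],\Phi[\nu])\leq\frac{2}{3}\W_1(\mu,\nu)$, so $\tau_1(K_\bullet)\leq 2/3<1$.

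Alternatively, one can check Assumption~\ref{assump:R2} directly. Couple the three mixture components separately: $\delta_x$ with $\delta_y$, $\mu$ with $\nu$ optimally, and $\calU_d$ with itself. This gives $\W_1(K_\mu(x,\bullet),K_\nu(y,\bullet))\leq\frac{1}{3}\|x-y\|+\frac{1}{3}\W_1(\mu,\nu)$, hence $c=C=1/3$, and Lemma~\ref{lem:tau} again yields $2/3$. Assumption~\ref{assump:global} (A2) is trivially satisfied.

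\emph{Step 2 (choice of $\eta_0$ and moments).} Take $\eta_0=\calU_d$. Since $\Phi[\calU_d]=\calU_d$, we get $\eta_n=\calU_d$ for all $n$. For the particle system, argue by induction that every particle stays in the cube $[-1,1]^d$. Initially $X^i_0\in[-1,1]^d$. At each step, $X^i_{n+1}$ is drawn from one of three sources: $\delta_{X^i_n}$, the empirical measure $\eta^N_n$ (supported on the current particle positions, all in the cube), or $\calU_d$. Each source is supported in $[-1,1]^d$, so the new particle is too. Hence $\|X^1_n\|\leq\sqrt d$ almost surely, and $\E[\|X^1_n\|^p]\leq d^{p/2}=:M^{(p)}_\star$ for all $p>1$ and $n\geq0$.

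\emph{Step 3 (lower bound).} This is the step that needs an actual argument, though it is elementary. I will show that for any points $x^1,\dots,x^N$, deterministically,
\[
\W_1\big(m(x^1,\dots,x^N),\calU_d\big)\geq c_d N^{-1/d}.
\]
Let $\omega_d$ be the volume of the unit ball and choose $r$ with $N\omega_d r^d=\frac{1}{2}2^d$, so $r=c'_dN^{-1/d}$. The union of the balls $B(x^i,r)$ has $\calU_d$-mass at most $1/2$. Hence under any coupling, at least mass $1/2$ of $\calU_d$ is transported to the support points from distance at least $r$. So every coupling has cost at least $r/2$, giving $\W_1\geq r/2$.

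Applying this pathwise to $\eta^N_n$ at every $n$ and taking expectations gives $\E[\W_1(\eta^N_n,\eta_n)]\geq \frac{c'_d}{2}N^{-1/d}$ uniformly in $n$.

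The only potential obstacle is confirming that the cube is invariant under the particle dynamics, which is what licenses the uniform moment bound. As checked in Step 2, this holds because all three mixture components are supported in $[-1,1]^d$.
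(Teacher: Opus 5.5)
Your proof is correct, but it takes a different route from the paper's.

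\emph{The paper's route.} It takes an arbitrary bounded $\eta_0$ supported in the cube. It uses the explicit formula $\eta_n=(2/3)^n\eta_0+(1-(2/3)^n)\calU_d$ to get a density lower bound $3^{-1}2^{-d}$ on the cube for $n\geq 1$. It then cites a quantization theorem of Graf and Luschgy to conclude $\W_1(\eta^N_n,\eta_n)\gtrsim N^{-1/d}$ almost surely.

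\emph{Your route.} You choose the stationary initial law $\eta_0=\calU_d$, so $\eta_n\equiv\calU_d$. You replace the citation with a self-contained volume argument: $N$ balls of radius $r\asymp N^{-1/d}$ carry at most half the $\calU_d$-mass, so every coupling moves mass at least $1/2$ a distance at least $r$.

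\emph{What each buys.} Your version gives an explicit constant and also covers $n=0$, which the paper's general $\eta_0$ does not (take a Dirac initial law). It also makes the invariance of the cube under the particle dynamics, and hence the uniform moment bound, transparent. The paper's version shows the lower bound is not an artifact of starting at stationarity.

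Your elementary argument also extends to the paper's generality. If $\eta_n$ has density at least $\rho$ on $[-1,1]^d$, the $\eta_n$-mass of the cube outside the $N$ balls is at least $\rho(2^d-N\omega_d r^d)$. Choosing $N\omega_d r^d=2^{d-1}$ then gives $\W_1\geq \rho\,2^{d-1}r\gtrsim N^{-1/d}$, with no appeal to quantization theory.

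Your contraction step agrees with the paper's bound $c=C=1/3$. Your direct coupling $\frac{2}{3}\gamma+\frac{1}{3}(\mathrm{id},\mathrm{id})_\#\calU_d$ gives $\tau_1(K_\bullet)\leq 2/3$ without going through Lemma~\ref{lem:tau}.
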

    \begin{proof}
        For the same of simplicity, take $\eta_0$ to be bounded and supported on the unit cube; in particular, it has finite moments of all orders. The same is true of $X^1_n$ under these conditions. Moreover, $K_\bullet$ is a strict contraction such that
        \[
            \W_1(K_\mu(x,\bullet), K_\nu(y,\bullet))\leq \frac{1}{3}\|x-y\| + \frac{1}{3}\W_1(\mu,\nu).
        \]

        Note that $\eta_n$ has a strictly positive density on the unit cube for $n\geq 1$ for any initial measure. Indeed,
        \[
            \eta_n=\left(\frac{2}{3}\right)^n \eta_0 + \left(1-\left(\frac{2}{3}\right)^n\right)\calU_d
        \]so that the density of $\eta_n$, also denoted $\eta_n$, satisfies $\eta_n(x)\geq (1-(2/3)^n)2^{-d}\geq (1-2/3)2^{-d}=3^{-1}2^{-d}$. Thus, classical results form quantization theory, e.g.,  \cite[Theorem 6.2]{graf2000foundations}, imply that
        \[
            \W_1(\eta^N_n, \eta_n)\gtrsim N^{-1/d}~~~\eta_n-a.s
        \]regardless of the dependence structure of $\eta^N_n$, or even if $\eta^N_n$ is not supported on $\calU_d$. Furthermore, since the cited quantization result only depends on the volume of the unit cube, the lower bound on the density, and the dimension, the above result holds \emph{uniformly} in $n$.
    \end{proof}

    This example reveals an important limitation of our technique: by leveraging empirical measure theory to establish propagation of chaos in the ways above, we cannot hope to overcome the curse of dimensionality, i.e., the $1/d$ in the exponent of $N$, at least in general. Indeed, the overall argument above holds for any measure having a density w.r.t. the Lebesgue measure on $\R^d$ and satisfying mild moment conditions (although perhaps not uniformly), and this result is a geometric fact of approximating continuous measures by discrete ones, not a probabilistic result. Furthermore, $\W_1(\eta^{N,q}_n,\eta_n^{\otimes q})$ may be \emph{much} better that $\calO(N^{-1/d})$. In fact, in the example above, it is easy to see that $\calU_d$ is invariant for $K_{\bullet}$, and if $\eta_0=\calU_d$ in Proposition~\ref{prop:sharp}, then $\eta^{N,1}_n=\calU_d~\forall n\geq 0$. This implies $\W_1(\eta^{N,1}_n, \eta_n)=\W_1(\eta^{N,1}_n, \calU_d)=0$, even though the same is not necessarily true for $q>1$ due to correlations between particles.\nnewline

    On the other hand, the results of Section~\ref{subsec:poc} apply in very broad settings. Lipschitzness is a mild assumption that is likely satisfied by many systems of interest as we will show in later sections. Therefore we see that propagation of chaos is a relatively common phenomenon for McKean-type nonlinear Markov chains, and Lipschitzness is a convenient setting for the study of many applications derived from this theory. A slower rate of $\calO(N^{-1/d})$ may well be an acceptable trade-off, at least as a starting point for further analysis.\nnewline

    Nevertheless, it is interesting to ask how one can improve these bounds and, based on the remarks above, the most obvious solution is to bypass the use of the empirical measure entirely. Transportation inequalities and their favourable tensorization properties provide one such option, and we discuss this in some detail below.

    \paragraph{Transportation Inequalities} First, let us investigate the case where $\eta_n$ satisfies a $T_2(\lambda_n)$ transportation inequality for each $n\geq 0$, i.e. that $\W_2(\nu, \eta_n)\leq \sqrt{2\lambda_n \KL(\nu\|\eta_n)}~\forall n\geq 0$ and every $\nu\in \calP_2(\R^d)$ with $\nu\ll \eta_n$. This can be used to obtain dimension-free quantitative propagation of chaos as follows: proceeding formally and neglecting issues of absolute continuity, if $\eta_n$ satisfies a $T_2(\lambda_n)$ inequality then its $q$-fold tensor product also satisfies a $T_1(\lambda_n)$ inequality on the product space $(\R^d)^{\otimes q}$, i.e. $\W_1(\nu, \eta_n^{\otimes q})\leq \sqrt{2\lambda_n \KL(\nu\|\eta_n^{\otimes q})}$ \cite{gozlan2009characterization}. But then we can apply an inequality of Csizar for exchangeable measures \cite[Lemma 8.5.1]{del2004feynman} to obtain
    \[
        \W_1(\eta^{N,q}_n, \eta_n^{\otimes q})\leq \sqrt{2\lambda_n \KL(\eta^{N,q}_n\|\eta_n^{\otimes q})} \lesssim \frac{\sqrt{q}}{\sqrt{N}}\sqrt{\lambda_n \KL(\eta^{N,N}_n\|\eta_n^{\otimes N})}.
    \]This is a substantial improvement over Theorem~\ref{thm:poc_general} because it does not explicitly depend on $d$ (although $\lambda_n\KL(\eta^{N,N}_n, \|\eta_n^{\otimes N})$ may depend on $d$ implicitly), and $\calO(\sqrt{q/N})$ is constant in $q/N$, which is closer to the optimal rates from \cite{lacker2023hierarchies}. However, this approach has the significant drawback that proving $T_2(\lambda)$ inequalities is generally hard, and it seems unlikely to be tractable for many systems of interest. One sufficient condition is if $\eta_n$ satisfies a logarithmic Sobolev inequality, although this would be nontrivial to show for each $\eta_n$ without special knowledge of the system. \nnewline

    On the other hand, $T_1(\lambda)$ inequalities have a much more tractable sufficient condition in the form of existence of square-exponential moments \cite{bolley2005weighted}, i.e., when $\E[\exp(\alpha \|X\|^2)]<\infty$ for $\alpha>0$. Moreover, the moment control techniques developed in this work can be adapted to proving the existence of square-exponential moments for the time-marginals $\eta_n$ -- indeed, this seems likely to be true for both applications we consider in Sections~\ref{sec:mv}~\&~\ref{sec:fkm}. The trade-off for more tractable sufficient conditions is that, under the $T_1(\lambda)$ condition, we now have \emph{linear} tensorization in $q$, not constant:
    \[
        \W_1(\eta_n^{N,q}, \eta^{\otimes q}_n)\leq \sqrt{2\lambda_n q\KL(\eta^{N,q}_n, \eta^{\otimes q}_n)}\lesssim \frac{q}{\sqrt{N}}\sqrt{\lambda_n \KL(\eta^{N,N}_n\|\eta_n^{\otimes N})}.
    \]This is still a net improvement over our current results as the scaling in $N$ is no longer dependent on $d$, and the worst-case scaling in $q$ is $\calO(q)$ versus $\calO(q^3)$ previously.\nnewline

    In both cases, the issue of $\KL(\eta^{N,N}_n\|\eta^{\otimes N}_n)$ remains. However this may not to be too much of an issue for many interesting applications. Showing
    \[
        \diff{\eta^{N,N}_n}{\eta^{\otimes N}_n}(x)\propto\exp  H_N(x)
    \]with $N\mapsto H_N(x)$ is $\calO(1)$ will suffice, where $H_N$ is an ``interaction Hamiltonian''. For example, considering the system in Section~\ref{sec:fkm}, in \cite[Theorem~8.2.3]{del2004feynman} it is shown that $\eta^{N,N}_n\ll \eta_n^{\otimes N}$ and furthermore they derive  $\calO(1)$ bounds w.r.t $N$ in-terms of regularity of $K_\bullet$, which become uniform-in-time as soon as $K_\bullet$ satisfies sufficient regularity conditions \cite[Corollary~8.5.1]{del2004feynman}. This essentially shows that our $\calO(q/N^{1/d}\vee q^3/N))$ results in Section~\ref{sec:fkm} can be be strengthened to $\calO(q/\sqrt{N})$, at the cost of using deep knowledge of the system of interest versus general tools such as Lipschitz regularity.\nnewline

    \subsection{Moments}
    We finish this section with a discussion on establishing moment bounds of the form $\E[\|X^1_n\|^p]\leq M^{(p)}_n$ as required by Theorem~\ref{thm:poc_general}. Many popular examples of nonlinear Markov kernels $K_\eta$ have a structure which consists of a ``linear'' part and a nonlocal ``interaction part'' which depends ultimately on a integral w.r.t. $\eta$. Notably, \emph{both} applications from Sections~\ref{sec:mv}~\&~\ref{sec:fkm}, which have very different origins, satisfy this structure. Below, we show how popular ``drift criterion'' or ``Lyapunov function'' techniques from linear Markov theory can be used to establish moment control when $K_\eta$ satisfies this form.

    \begin{assumption}[Lyapunov Moment Condition]\label{assump:lyap_mom}
        Let $V:\R^d\to[0,\infty)$ be a measurable function. Assume that there are $a_n,b_n,c\in [0,\infty)$ with $a_n+b_n,c>0$, such that
        \[
            K^{(n)}_\eta V(x) \leq a_n V(x) + b_n\eta(V) + c.
        \]
    \end{assumption}

    \begin{proposition}\label{prop:lyap_mom_mf}
        Assume Assumption~\ref{assump:lyap_mom} holds and that $\eta_0(V)<\infty$. Then for $n\geq 0$, $\eta_{n}$ from \eqref{eq:mean_field} satisfies
        \[
            \eta_{n}(V)\leq A(n) \eta_0(V) + c\sum_{k=1}^{n} \prod_{j=k+1}^n (a_j + b_j)
        \]where
        \[
            A(k):=\prod_{i=1}^k (a_i + b_i),~~~k=1,2,\dots.
        \]
    \end{proposition}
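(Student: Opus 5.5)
The plan is to integrate the Lyapunov inequality of Assumption~\ref{assump:lyap_mom} against $\eta_{n-1}$, which yields a scalar linear recursion for $\eta_n(V)$. We then unroll that recursion. Since $\eta_n=\eta_{n-1}K^{(n)}_{\eta_{n-1}}$ and $V\geq 0$, Tonelli's theorem gives
\[
    \eta_n(V)=\int \eta_{n-1}(\dd x)\,K^{(n)}_{\eta_{n-1}}V(x)\leq a_n\,\eta_{n-1}(V)+b_n\,\eta_{n-1}(V)+c=(a_n+b_n)\,\eta_{n-1}(V)+c .
\]
This is the key observation. In the mean-field system the measure indexing the kernel coincides with the measure being propagated, so the nonlocal term $b_n\eta(V)$ simply merges with the local term $a_nV(x)$.

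Next, we proceed by induction on $n$, with the convention that empty products equal $1$ and $A(0)=1$. The case $n=0$ is trivial. At the inductive step, we check finiteness before using the bound. Finiteness of $\eta_{n-1}(V)$ (inductive hypothesis) implies finiteness of $\eta_n(V)$ by the display above, so the recursion makes sense in $[0,\infty)$ at each step. Plugging in the hypothesis
\[
    \eta_{n-1}(V)\leq A(n-1)\eta_0(V)+c\sum_{k=1}^{n-1}\prod_{j=k+1}^{n-1}(a_j+b_j)
\]
and multiplying by $(a_n+b_n)$ gives $A(n)\eta_0(V)$ for the first term. For the sum, the products extend to $\prod_{j=k+1}^{n}$. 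The additional constant $c$ is exactly the $k=n$ term, whose product is empty.

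There is no real obstacle here; the only points requiring care are bookkeeping. First, measurability of $x\mapsto K^{(n)}_\eta V(x)$ is needed so the integration is valid; this is standard for Markov kernels and nonnegative measurable $V$. Second, the empty-product conventions must be handled so the formula is correct at $n=0,1$.
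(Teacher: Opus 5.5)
Your proof is correct and follows the paper's argument. You integrate the Lyapunov bound against the current mean-field marginal to get $\eta_n(V)\leq (a_n+b_n)\eta_{n-1}(V)+c$, then induct, absorbing the extra $c$ as the empty-product $k=n$ term. Your indexing $\eta_n=\eta_{n-1}K^{(n)}_{\eta_{n-1}}$ is correct where the paper's step writes $K^{(n)}_{\eta_n}$ instead of $K^{(n+1)}_{\eta_n}$, and your convention $A(0)=1$ covers the $n=0$ case, whereas the paper starts its induction at $n=1$.
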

    \begin{proof}
        For $n=1$, we have
        \[
            \eta_1(V)\leq (a_1 + b_1)\eta_0(V) + c  = (a_1 + b_1)\eta_0 + c\sum_{k=1}^1\prod_{j=2}^1 (a_j + b_j).
        \]Hence assuming true for $n$, for $n+1$ we have
        \begin{align*}
            \eta_{n+1}(V)&=\eta_n(K^{(n)}_{\eta_n}V)\\
            &\leq (a_{n+1} + b_{n+1})\eta_n(V) + c \\
            &= (a_{n+1} + b_{n+1})\left[A(n)\eta_0(V) + c\sum_{k=1}^n \prod_{j=k+1}^n (a_j + b_j)\right] + c\\
            &= A(n+1)\eta_0(V) + c\sum_{k=1}^n \prod_{j=k+1}^{n+1}(a_j + b_j) + c\\
            &=A(n+1)\eta_0(V) + c\sum_{k=1}^{n+1} \prod_{j=k+1}^{n+1}(a_j + b_j)
        \end{align*}
        since the $k=n+1$-th term is 1:
        \[
            \prod_{j=(n+1)+1}^{n+1}(a_j + b_j) := 1.
        \]
    \end{proof}

    Now we extend this approach to the moments of the particle system \eqref{eq:ips}, which is actually what is required for Theorem~\ref{thm:poc_general}. We leverage the useful fact that, while in general $\eta^N_n\neq \Phi_n[\eta^N_{n-1}]$, it is true in the \emph{weak sense}, i.e., $\E[\eta^N_n(f)]=\E[\Phi_n[\eta^N_{n-1}](f)]$ for the same reason as Lemma~\ref{lem:emp_mom}. Note that this is only a one-step equality, i.e., we do \emph{not} have $E[\eta^N_n(f)]=\E[\Phi_n[\Phi_{n-1}[\eta^N_{n-2}]](f)]$. However, we can use the fact that the inequality in Assumption~\ref{assump:lyap_mom} is \emph{recursive} to say something about the particle dynamics, in expectation. Therefore, a one-step equality $\E[\eta^N_n(f)]=\E[\Phi_n[\eta^N_{n-1}](f)]$ plus a recursive inequality on $K^{(n)}_\eta$ allows us to treat the moments of the particle system like those of the (simpler) mean-field system in Proposition~\ref{prop:lyap_mom_mf}.

    \begin{proposition}\label{prop:lyap_mom_ips}
        Suppose that Assumption~\ref{assump:lyap_mom} holds, and that $\E[\eta^N_0(V)]<\infty$. Then
        \[
            \E[\Phi_{n+1}[\eta^N_n](V)]\leq A(n+1)\E[\eta^N_0(V)] + c\sum_{k=1}^{n+1} \prod_{j=k+1}^{n+1} (a_j + b_j).
        \]
    \end{proposition}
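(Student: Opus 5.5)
We argue by induction on $n$, mirroring Proposition~\ref{prop:lyap_mom_mf}, but working with expectations over the particle system. The key identity is the weak one-step equality behind Lemma~\ref{lem:emp_mom}:
\[
\E[\eta^N_n(V)] = \E[\Phi_n[\eta^N_{n-1}](V)] \qquad (n\geq 1).
\]
Its proof is the same as that of Lemma~\ref{lem:emp_mom}. We condition on $\calF^N_{n-1}$ and use that the particles are conditionally i.i.d.\ with law $\Phi_n[\eta^N_{n-1}]$. This gives $\E[\eta^N_n(V)\st\calF^N_{n-1}]=\Phi_n[\eta^N_{n-1}](V)$, and we then take expectations in $[0,\infty]$; nonnegativity of $V$ makes this legitimate.

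\textbf{One-step bound.} For a fixed (random) empirical measure $\mu=\eta^N_n$, Assumption~\ref{assump:lyap_mom} applied with $\eta=\mu$ and integrated against $\mu$ gives
\[
\Phi_{n+1}[\mu](V)=\mu\bigl(K^{(n+1)}_\mu V\bigr)\leq a_{n+1}\mu(V)+b_{n+1}\mu(V)+c=(a_{n+1}+b_{n+1})\mu(V)+c .
\]
This is deterministic and holds pathwise. Taking expectations yields
\[
\E[\Phi_{n+1}[\eta^N_n](V)]\leq (a_{n+1}+b_{n+1})\E[\eta^N_n(V)]+c .
\]

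\textbf{Induction.} For $n=0$ the one-step bound is exactly the claim, since $A(1)=a_1+b_1$ and the sum reduces to the empty product $1$. For the inductive step, suppose the claim holds at $n-1$, that is, for $\E[\Phi_n[\eta^N_{n-1}](V)]$. The weak identity converts $\E[\eta^N_n(V)]$ into $\E[\Phi_n[\eta^N_{n-1}](V)]$, so
\[
\E[\Phi_{n+1}[\eta^N_n](V)]\leq (a_{n+1}+b_{n+1})\Bigl[A(n)\E[\eta^N_0(V)]+c\sum_{k=1}^{n}\prod_{j=k+1}^{n}(a_j+b_j)\Bigr]+c .
\]
The same algebra as in the proof of Proposition~\ref{prop:lyap_mom_mf} then reorganizes this into $A(n+1)\E[\eta^N_0(V)]+c\sum_{k=1}^{n+1}\prod_{j=k+1}^{n+1}(a_j+b_j)$. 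The final $+c$ is the $k=n+1$ term, whose product is empty and equals $1$.

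\textbf{Main obstacle.} The only delicate point is justifying the weak identity and finiteness, which needs care with conditional expectations of possibly infinite quantities. Since $V\geq 0$, all expectations lie in $[0,\infty]$ and the tower property holds for nonnegative variables. The induction then shows finiteness at every step, starting from $\E[\eta^N_0(V)]<\infty$. We also need to check that the index convention matches: the kernel $K^{(n+1)}$ acts at step $n+1$, as in \eqref{eq:ips}.
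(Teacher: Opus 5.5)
Your proof is correct and is essentially the paper's argument. Both combine the pathwise bound $\Phi_{n+1}[\mu](V)\le (a_{n+1}+b_{n+1})\mu(V)+c$ with the one-step identity $\E[\eta^N_n(V)]=\E[\Phi_n[\eta^N_{n-1}](V)]$. The paper first unrolls the recursion for $\E[\eta^N_n(V)]$, using exchangeability and the single particle $X^1_n$, and then applies one more step; you instead induct directly on $\E[\Phi_{n+1}[\eta^N_n](V)]$.

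One small correction to how you justify the identity. Given $\calF^N_{n-1}$, the particles are conditionally independent but not identically distributed: $X^i_n$ has conditional law $K^{(n)}_{\eta^N_{n-1}}(X^i_{n-1},\cdot)$. The identity therefore comes from linearity, $\E[\eta^N_n(V)\mid\calF^N_{n-1}]=\frac{1}{N}\sum_{i=1}^N K^{(n)}_{\eta^N_{n-1}}V(X^i_{n-1})=\Phi_n[\eta^N_{n-1}](V)$, exactly as in Lemma~\ref{lem:emp_mom}, not from a conditional i.i.d.\ property.
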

    \begin{proof}
        Firstly, for $\calF^N_n:=\sigma(X^1_n, \dots, X^N_n)$, we have almost surely
        \begin{align*}
            \E[\Phi_{n+1}[\eta^N_n](V)|\calF^N_n] &= \eta^N_n\left(K^{(n+1)}_{\eta^N_n}V\right) \\
            &\leq a_{n+1}\eta^N_n(V) + b_{n+1}\eta^N_n(V) + c \\
            &= (a_{n+1} + b_{n+1})\eta^N_n(V) + c
        \end{align*}
        so that by the Law of Total Expectation,
        \[
            \E[\Phi_{n+1}[\eta^N_n](V)]\leq (a_{n+1} + b_{n+1})\E[\eta^N_n(V)] + c.
        \]Using exchangeability, the Law of Total Expectation again, and the fact that $X^i_n \sim K^{(n)}_{\eta^N_{n-1}}(X^i_{n-1}, \bullet)$, we have
        \begin{align*}
            \E[\eta^N_n(V)] &= \E\left[\frac{1}{N}\sum_{i=1}^N V(X^i_n)\right]=\E[V(X^1_n)]=\E[K^{(n)}_{\eta^N_{n-1}}V(X^1_{n-1})]\\
            &\leq a_n \E[V(X^1_{n-1})] + b_n \E[\eta^N_{n-1}(V)] + c\\
            &= (a_n + b_n) \E[\eta^N_{n-1}(V)] + c.
        \end{align*}
        Hence, proceeding as in Proposition~\ref{prop:lyap_mom_mf}, we get
        \[
            \E[\eta^N_n(V)]\leq A(n) \E[\eta^N_0(V)] + c\sum_{k=1}^{n} \prod_{j=k+1}^n (a_j + b_j)
        \]so that
        \begin{align*}
            \E[\Phi_{n+1}[\eta^N_n](V)]&\leq (a_{n+1} + b_{n+1})\E[\eta^N_{n}(V)] + c\\
            &\leq A(n+1)\E[\eta^N_0(V)] + c\sum_{k=1}^{n+1} \prod_{j=k+1}^{n+1} (a_j + b_j).
        \end{align*}
    \end{proof}

    The following is an immediate corollary of the above results.
    \begin{corollary}
        Suppose that Assumption~\ref{assump:lyap_mom} is satisfied, and that $\E[\eta^N_0(V)]<\infty$. Let $p\geq 1$; if there exists constants $\theta>0$ and $R\geq 0$ such that 
        \[
            \|x\|^p\leq \theta V(x)~\forall \|x\|>R
        \]then there are constants $M_n^{(p)}\in(0,\infty)$ such that
        \[
            \E[\|X^1_n\|^p]\leq M^{(p)}_n~~~\forall n=0,1,2,\dots
        \]Furthermore, if $a_n + b_n < 1~\forall n\geq 1$, then the above bound can be replaced by a uniform constant $M^{(p)}_\star$.
    \end{corollary}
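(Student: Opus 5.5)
The plan is to reduce the particle moment $\E[\|X^1_n\|^p]$ to the Lyapunov quantity $\E[\eta^N_n(V)]$. Propositions~\ref{prop:lyap_mom_mf} and \ref{prop:lyap_mom_ips} already control this quantity recursively. The domination $\|x\|^p\leq\theta V(x)$ holds only outside the ball of radius $R$, so we first record the pointwise bound
\[
    \|x\|^p \leq R^p + \theta V(x)\qquad\forall x\in\R^d,
\]
which holds because $\|x\|^p\leq R^p$ on $\{\|x\|\leq R\}$ and $V\geq 0$ everywhere. Applying this bound to $X^1_n$ and using exchangeability gives
\[
    \E[\|X^1_n\|^p]\leq R^p + \theta\,\E[V(X^1_n)] = R^p+\theta\,\E[\eta^N_n(V)].
\]

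Next we bound $\E[\eta^N_n(V)]$. For $n=0$ it is finite by assumption. For $n\geq 1$, Lemma~\ref{lem:emp_mom} applies verbatim with $\V_p$ replaced by $V$: its proof only uses exchangeability and conditional independence. This gives $\E[\eta^N_n(V)]=\E[\Phi_n[\eta^N_{n-1}](V)]$. Proposition~\ref{prop:lyap_mom_ips}, with index $n-1$, then bounds the right-hand side by $A(n)\E[\eta^N_0(V)]+c\sum_{k=1}^n\prod_{j=k+1}^n(a_j+b_j)$. Equivalently, one can read off the intermediate bound from inside that proof. We therefore set
\[
    M^{(p)}_n:=R^p+\theta\Big(A(n)\E[\eta^N_0(V)]+c\sum_{k=1}^{n}\prod_{j=k+1}^n(a_j+b_j)\Big),
\]
with $A(0):=1$ and the empty sum equal to $0$. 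This is finite and strictly positive (add $1$ if necessary to ensure positivity when $R=0$).

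For the uniform statement, a pointwise hypothesis $a_n+b_n<1$ is not enough on its own. The products $A(n)$ are then at most $1$, but the sum $\sum_k\prod_{j>k}(a_j+b_j)$ need not be bounded unless the factors stay uniformly away from $1$. The main obstacle is therefore interpreting the hypothesis correctly. I would read it as a uniform bound $\sup_n(a_n+b_n)\leq\rho<1$, which is automatic in the time-homogeneous case. Under this reading $A(n)\leq 1$, and the sum is at most $\sum_{m\geq 0}\rho^m=1/(1-\rho)$. This yields the uniform constant
\[
    M^{(p)}_\star:=R^p+\theta\big(\E[\eta^N_0(V)]+c/(1-\rho)\big).
\]
I would state this uniformity assumption explicitly in the write-up.

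The remaining steps are routine bookkeeping: checking the recursion indices and the convention for empty products.
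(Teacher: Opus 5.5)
Your proof is correct and is the ``immediate'' argument the paper leaves implicit. You use the global bound $\|x\|^p\le R^p+\theta V(x)$, then exchangeability (equivalently Lemma~\ref{lem:emp_mom} with $V$ in place of $\|\cdot\|^p$) to pass to $\E[\eta^N_n(V)]$, then the recursion of Proposition~\ref{prop:lyap_mom_ips}. You are also right to require $\sup_n(a_n+b_n)\le\rho<1$ for the uniform part, because the statement as written is false. Take the ($\eta$-independent) kernel $K^{(n)}_\eta(x,\cdot)=\delta_{(a_n\|x\|+1)e_1}$ with $a_n=n/(n+1)$, $b_n=0$, $c=1$, $V=\|\cdot\|$, $p=\theta=1$, $R=0$ and $X^1_0=0$. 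It satisfies Assumption~\ref{assump:lyap_mom} with equality and has $a_n+b_n<1$ for every $n$, yet $\E[\|X^1_n\|]=\sum_{k=1}^n\frac{k+1}{n+1}\ge n/2$.
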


    \begin{remark}
        Note that the same corollary applies instead with $ V(x)=\exp(\alpha \|x\|^2);~\alpha>0$. In this case, we obtain control over square-exponential moments, which in turn implies finiteness of \emph{all} moments, and also a $T_1$ transportation inequality \cite{bolley2005weighted}, as discussed in Section~\ref{subsec:rates}.
    \end{remark}

    \section{Uniform Propagation of Chaos}\label{sec:uniform_propagation_of_chaos}

    Below is an immediate consequences of Theorem~\ref{thm:poc_general} when $K^{(n)}_\bullet$ is a contraction for every $n \geq 1$.
    \begin{corollary}[Uniform Propagation of Chaos]\label{coro:unif_poc}
        Suppose $\tau_1(K^{(n)}_\bullet)\leq \tau_1^\star<1~\forall n\geq 1$ and $M^{(p)}_k\leq M^{(p)}_\star<\infty~\forall k\geq 0$. Then 
        \[
            \W_p(\eta^{N,q}_n, \eta^{\otimes q}_n)\leq C_{p, d}\cdot \frac{\left(M^{(p)}_\star\right)^{1/p}}{1-\tau_1^\star}\frac{q}{N^{1/d}}.
        \]
    \end{corollary}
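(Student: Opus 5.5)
The plan is to read the left-hand side as the $1$-Wasserstein distance, with the index $p$ referring only to the moment order $p>1$ used in Proposition~\ref{prop:emp_main}. Nothing earlier in the paper controls $\W_p$ for $p>1$, and $\W_1\le\W_p$ goes the wrong way for deducing a $\W_p$ bound from a $\W_1$ bound. With that reading, the corollary should follow from the results already proved plus a geometric-series summation.

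\textbf{Step 1: empirical-measure bound.} Submultiplicativity of Lipschitz constants under composition gives
\[
\tau_1(\Phi_{k,n})\le\prod_{j=k+1}^n\tau_1(\Phi_j)\le(\tau_1^\star)^{n-k},
\]
as in the remark after Proposition~\ref{prop:emp_main_oneside}. Inserting this and $M^{(p)}_k\le M^{(p)}_\star$ into Proposition~\ref{prop:emp_main} yields
\[
\E[\W_1(\eta^N_n,\eta_n)]\le C_{p,d}\,\bigl(M^{(p)}_\star\bigr)^{1/p}N^{-1/d}\sum_{k=0}^n(\tau_1^\star)^{n-k}\le C_{p,d}\frac{\bigl(M^{(p)}_\star\bigr)^{1/p}}{1-\tau_1^\star}N^{-1/d}.
\]
This bound is uniform in $n$.

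\textbf{Step 2: passing to $q$-marginals.} The factor $q$ comes from Lemma~\ref{lem:tensorization}, applied with $\nu=\eta_n$ to the exchangeable vector $\ovl{X}^N_n$ (exchangeable by the Fact). This gives the main term $q\,\E[\W_1(\eta^N_n,\eta_n)]$, which Step 1 bounds by the claimed expression. Equivalently, Theorem~\ref{thm:poc_general} can be specialised in the same way. The only other contribution is the remainder $2q^3N^{-1}\M_1(\eta^{N,1}_n)\le 2q^3N^{-1}\bigl(M^{(p)}_\star\bigr)^{1/p}$.

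\textbf{Step 3: removing the remainder (main obstacle).} The statement as worded has no $q^3/N$ term, so this remainder must be eliminated.

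My first attempt would be to absorb it into $C_{p,d}$. We have
\[
\frac{q^3/N}{q/N^{1/d}}=q^2N^{1/d-1}.
\]
This ratio is bounded when $q\lesssim N^{(1-1/d)/2}$. That covers the typical fixed-$q$, large-$N$ regime and, since $d\ge 3$, every $q\le N^{1/3}$. It does not cover $q$ up to $N$.

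For the full range I would replace Lemma~\ref{lem:tensorization} by the synchronous coupling of Theorem~\ref{thm:poc_good}. This needs the contraction to come from Assumption~\ref{assump:R2} with $c+C\le\tau_1^\star<1$, so in particular $c<1$. With the bound of Step 1 the coupling gives
\[
\W_1(\eta^{N,q}_n,\eta^{\otimes q}_n)\le qC\sum_{k=0}^{n-1}c^{n-k-1}\,\E[\W_1(\eta^N_k,\eta_k)]\le \frac{qC}{1-c}\,C_{p,d}\frac{\bigl(M^{(p)}_\star\bigr)^{1/p}}{1-\tau_1^\star}N^{-1/d}.
\]
This has no $q^3$ term, and the prefactor $C/(1-c)\le 1$ because $c+C<1$. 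So the stated form holds exactly on this route.

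I therefore expect the corollary to hold as stated either in the regime $q^2\lesssim N^{1-1/d}$, via Steps 1–2 alone, or for all $q\le N$ under the strengthened Assumption~\ref{assump:R2}, via Step 3. Checking which of these hypotheses the corollary implicitly intends is the delicate point of the proof.
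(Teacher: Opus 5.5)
Your Steps 1--2 are exactly the paper's argument. The paper gives no separate proof and presents the corollary as an immediate consequence of Theorem~\ref{thm:poc_general}: insert $\tau_1(\Phi_{k,n})\le(\tau_1^\star)^{n-k}$ and $M^{(p)}_k\le M^{(p)}_\star$, sum the geometric series, and read $\W_p$ as $\W_1$.

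You are also right that this route leaves the term $2q^3N^{-1}(M^{(p)}_\star)^{1/p}$. The printed statement drops it without justification, while the paper's later geometric-ergodicity corollary keeps a $q^3/N$ term. Your Step 3 is therefore a correct repair of the statement, not a gap in your proof. It works either by absorbing the term when $q^2\lesssim N^{1-1/d}$ (after enlarging $C_{p,d}$), or by using the synchronous coupling of Theorem~\ref{thm:poc_good} under Assumption~\ref{assump:R2} with $c+C<1$, where $C/(1-c)\le 1$ recovers the stated constant.
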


    \subsection{Modified Wasserstein Distance}
    
    \begin{definition}[Modified 1-Wasserstein Distance]
        Suppose there is a concave, increasing function $\rho:[0,\infty)\to[0,\infty)$ satisfying $\rho(0)=0$ and
        \[
            \ul{c}r\leq \rho(r) \leq \ovl{C}r
        \]for constants $\ul{c},\ovl{C}\in (0,\infty)$. Then
        \[
            d_\rho(x,y):=\rho(\|x-y\|)
        \]defines a metric on $\R^d$ and we may define the associated 1-Wasserstein distance on $\calP_\rho(\R^d)$
        \[
            \W_{\rho}(\mu,\nu):=\inf_{\gamma\in \calC(\mu,\nu)}\int d_\rho(x,y)\gamma(\dd x, \dd y)=\inf_{\gamma\in \calC(\mu,\nu)}\int \rho(\|x-y\|)\gamma(\dd x, \dd y)
        \]where we have defined
        \[
            \M_\rho(\mu):=\int \rho(\|x\|)\mu(\dd x)
        \]and 
        \[
            \calP_\rho(\R^d):=\{\mu\in\calP(\R^d) \st \M_\rho(\mu)<\infty\}.
        \]
    \end{definition}
    It is easy to see that $\W_\rho$ and $\W_1$ are equivalent and that $\M_1(\mu)<\infty$ iff $\M_\rho(\mu)<\infty$. However $\W_\rho$ offers more flexibility to tailor the metric $d_\rho(x,y)$ to the kernel of interest and therefore obtain a contraction. This would lead straightforwardly to a uniform propagation of chaos in the case of a time-homogeneous kernel or a uniform-in-$n$ bound on the contraction coefficients. This approach is now a standard technique in the study of Markov chains \cite{hairer2011yet} and nonlinear diffusions \cite{eberle2019quantitative,durmus2020elementary}.
    \begin{corollary}
        Suppose that the assumptions of Theorem~\ref{thm:poc_general} are satisfied, and that $K^{(n)}_\bullet$ is a contraction in $\W_\rho$, i.e.
        \[
            \W_\rho(\mu K^{(n)}_\mu,\nu K^{(n)}_\nu)\leq \tau_\rho(K^{(n)}_\bullet)\W_\rho(\mu,\nu)
        \]where $\tau_\rho(K^{(n)}_\bullet) \leq \tau_\rho^\star<1$ and $M^{(p)}_n\leq M^{(p)}_\star<\infty$ for all $n\geq 1$. Then
        \[
            \E[\W_1(\eta^{N}_n, \eta_n)]\leq C_{p, d}\frac{\ovl{C}}{\ul{c}}\cdot \frac{1}{N^{1/d}}\frac{(M^{(p)}_\star)^{1/p}}{1-\tau_\rho^\star}.
        \]
    \end{corollary}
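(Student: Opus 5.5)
The plan is to rerun the telescoping argument from the proof of Proposition~\ref{prop:emp_main}, now in the metric $\W_\rho$, and convert back to $\W_1$ only at the very end. Two facts follow from $\ul{c}r\leq\rho(r)\leq\ovl{C}r$ applied inside the coupling integral:
\[
\ul{c}\,\W_1(\mu,\nu)\leq \W_\rho(\mu,\nu)\leq \ovl{C}\,\W_1(\mu,\nu).
\]
Consequently $\E[\W_1(\eta^N_n,\eta_n)]\leq \ul{c}^{-1}\E[\W_\rho(\eta^N_n,\eta_n)]$, and it suffices to bound the $\W_\rho$ error.

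For the telescoping step, $\W_\rho$ is the Kantorovich cost of the metric $d_\rho$. Hence Kantorovich--Rubinstein duality holds for $d_\rho$-Lipschitz test functions, and the decomposition from \cite[(8.23)]{del2004feynman} gives
\[
\W_\rho(\eta^N_n,\eta_n)\leq\sum_{k=0}^n \W_\rho\bigl(\Phi_{k,n}[\eta^N_k],\Phi_{k,n}[\Phi_k[\eta^N_{k-1}]]\bigr),
\]
with the convention $\Phi_{-1,0}[\eta^N_{-1}]:=\eta_0$. Applying the contraction hypothesis $n-k$ times shows that the $k$-th term is at most $(\tau_\rho^\star)^{n-k}\W_\rho(\eta^N_k,\Phi_k[\eta^N_{k-1}])$. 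Then I would upper-bound $\W_\rho\leq\ovl{C}\W_1$ and condition on $\calF^N_{k-1}$. Since the particles at time $k$ are conditionally i.i.d.\ with law $\Phi_k[\eta^N_{k-1}]$, estimate~\eqref{eq:fournier} (with $r=1$) applies and gives a conditional bound of $C_{p,d}\M_p(\Phi_k[\eta^N_{k-1}])^{1/p}N^{-1/d}$. For $k=0$ the same bound holds because $\eta^N_0$ is an i.i.d.\ sample from $\eta_0$. Jensen's inequality together with Lemma~\ref{lem:emp_mom} bounds the expectation of this moment term by $(M^{(p)}_\star)^{1/p}$.

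Summing the resulting geometric series gives
\[
\E[\W_1(\eta^N_n,\eta_n)]\leq \frac{1}{\ul{c}}\sum_{k=0}^n(\tau_\rho^\star)^{n-k}\,\ovl{C}\,C_{p,d}\frac{(M^{(p)}_\star)^{1/p}}{N^{1/d}}\leq C_{p,d}\frac{\ovl{C}}{\ul{c}}\frac{1}{N^{1/d}}\frac{(M^{(p)}_\star)^{1/p}}{1-\tau^\star_\rho}.
\]

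The only point needing care is the telescoping identity in $\W_\rho$. One has to check that $d_\rho$ is a genuine metric, which holds because $\rho$ is concave, increasing and vanishes at $0$, so it is subadditive and the triangle inequality follows. Then duality over $d_\rho$-$1$-Lipschitz functions is valid. Alternatively, the duality can be avoided altogether by using the triangle inequality for $\W_\rho$ directly, which already yields the displayed sum. Every other step mirrors the proof of Proposition~\ref{prop:emp_main}.
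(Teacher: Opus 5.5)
Your proof is correct and is essentially the paper's argument. The paper converts the Lipschitz constant of the composed map, $\tau_1(\Phi_{k,n})\le(\ovl{C}/\ul{c})\,\tau_\rho(\Phi_{k,n})\le(\ovl{C}/\ul{c})(\tau_\rho^\star)^{n-k}$, and then invokes Proposition~\ref{prop:emp_main} as a black box; you instead rerun the same telescoping in $\W_\rho$ and apply the metric equivalence only at the sampling step and at the end, which gives the identical constant (in both versions the $k=0$ term tacitly uses $M^{(p)}_0\le M^{(p)}_\star$, although the statement assumes the moment bound only for $n\ge 1$).
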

    \begin{proof}
        Firstly, we note that $\calP_1(\R^d)=\calP_\rho(\R^d)$ so that
        \begin{align*}
            \tau_1(K^{(n)}_\bullet)&=\sup_{\mu,\nu\in\calP_1(\R^d),~\mu\neq\nu}\frac{\W_1(\mu K^{(n)}_\mu, \nu K^{(n)}_\nu)}{\W_1(\mu,\nu)}\\
            &=\sup_{\mu,\nu\in\calP_\rho(\R^d),~\mu\neq\nu}\frac{\W_1(\mu K^{(n)}_\mu, \nu K^{(n)}_\nu)}{\W_1(\mu,\nu)}\\
            &\leq \sup_{\mu,\nu\in\calP_\rho(\R^d),~\mu\neq\nu}\frac{\ovl{C}}{\ul{c}}\cdot \frac{\W_\rho(\mu K^{(n)}_\mu, \nu K^{(n)}_\nu)}{\W_\rho(\mu,\nu)}\\
            &= \frac{\ovl{C}}{\ul{c}}\tau_\rho(K^{(n)}_\bullet).
        \end{align*}
        Hence, we can use this to improve the estimate from Theorem~\ref{thm:poc_general}:
        \begin{align*}
            \E[\W_1(\eta^{N}_n, \eta_n)]&\leq C_{r, d}\cdot \frac{1}{N^{1/d}}\sum_{k=0}^n \left(M^{(p)}_k\right)^{1/p}\tau_1(\Phi_{n,k})\\
            &\leq C_{p, d}\frac{\ovl{C}}{\ul{c}}\cdot \frac{1}{N^{1/d}}\sum_{k=0}^n \left(M^{(p)}_k\right)^{1/p}\tau_\rho(\Phi_{n,k})\\
            &\leq C_{p, d}\frac{\ovl{C}}{\ul{c}}\cdot \frac{1}{N^{1/d}}\left(M^{(p)}_\star\right)^{1/p}\sum_{k=0}^n (\tau_\rho^{\star})^{n-k}\\
            &\leq C_{p, d}\frac{\ovl{C}}{\ul{c}}\cdot \frac{1}{N^{1/d}}\frac{\left(M^{(p)}_\star\right)^{1/p}}{1-\tau_\rho^\star}.
        \end{align*}
        The other results from Section~\ref{sec:propagation_of_chaos} apply \emph{mutatis mutandis}.
    \end{proof}

    \subsection{Geometric Ergodicity}
    Finally, we explore the possibility of converting a non-uniform propagation of chaos result, e.g. from Theorem~\ref{thm:poc_good}, into a uniform one. Specifically, we show that \emph{geometric ergodicity} of the mean field and interacting particle systems, plus non-uniform propagation of chaos, can result in uniform propagation of chaos. This approach was explored in \cite{guillin2021uniform} in continuous time for the 2-Wasserstein distance.
    \begin{assumption}\label{assump:ergodic}
        Let $K_\bullet$ be time-homogeneous.
        \begin{assumptions}[E]
            \item Assume that $K_\bullet$ satisfies Assumption~\ref{assump:R1}, that $M_k^{(r)}\leq M_\star^{(p)}<\infty$, and that \text{$\tau:=\tau_1(K_\bullet)>1$} so that by Proposition~\ref{prop:emp_main}
            \[
                \E[\W_1(\eta^N_n, \eta_n)]\leq \wtilde{C}_{p,d} N^{-1/d}\tau^n
            \]for some $\wtilde{C}_{p,d}>0$ independent of $N$.
            \item There is $\alpha<1,~C<\infty$, an invariant measure $\eta_\infty\in\calP_1(\R^d)$ for \eqref{eq:mean_field} and for every $N\geq 1$, there is an invariant measure $\eta^{N,N}_\infty\in\calP_1((\R^d)^{\otimes N})$ for \eqref{eq:ips} such that we have
            \begin{align*}
                \W_1(\eta_n,\eta_\infty)&\leq C\alpha^n \W_1(\eta_0,\eta_\infty)\\
                \E[\W_1(\eta^N_n,\eta^{N}_\infty)]&\leq C\alpha^n \E[\W_1(\eta^{N}_0,\eta^{N}_\infty)]
            \end{align*}
            where $\eta^N_\infty:=m(\ovl{X}^N_\infty)$ with $\ovl{X}^N_\infty\sim \eta^{N,N}_\infty$.
            \item $\eta_\infty$ has finite $p$-th moment for some $p>1$.
        \end{assumptions}
    \end{assumption}

    \begin{proposition}\label{prop:invar}
        Suppose Assumption~\ref{assump:ergodic} holds. Then there exists a constant $C'>0$ independent of $N$ such that
        \[
            \E[\W_1(\eta^N_\infty, \eta_\infty)]\leq C'N^{-1/d}.
        \]
    \end{proposition}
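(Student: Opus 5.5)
The plan is a standard triangle-inequality argument that balances geometric ergodicity against the (exponentially growing) finite-time estimate. Start both the mean-field system and the particle system from the \emph{mean-field} invariant measure: take $\eta_0=\eta_\infty$, so $\eta_n=\eta_\infty$ for all $n$, and $X^1_0,\dots,X^N_0\iid\eta_\infty$. By E3 this initial law has finite $p$-th moment. Then for every $n$,
\[
    \E[\W_1(\eta^N_\infty,\eta_\infty)]\leq \E[\W_1(\eta^N_\infty,\eta^N_n)] + \E[\W_1(\eta^N_n,\eta_n)] + \W_1(\eta_n,\eta_\infty).
\]
The last term vanishes. The middle term is bounded by E1 as $\wtilde{C}_{p,d}N^{-1/d}\tau^n$; this needs the uniform moment bound $M^{(p)}_\star$, which E1 grants.

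For the first term, couple $\ovl{X}^N_n$ and $\ovl{X}^N_\infty$ independently, or in any way compatible with how E2 is phrased, and use E2:
\[
    \E[\W_1(\eta^N_n,\eta^N_\infty)]\leq C\alpha^n\E[\W_1(\eta^N_0,\eta^N_\infty)].
\]
Then bound $\E[\W_1(\eta^N_0,\eta^N_\infty)]\leq \E[\W_1(\eta^N_0,\eta_\infty)]+\E[\W_1(\eta_\infty,\eta^N_\infty)]$. The first piece is at most $C_{p,d}\M_p(\eta_\infty)^{1/p}N^{-1/d}\le C''$ by \cite{fournier2015rate}, since $\eta^N_0$ is a genuine i.i.d. empirical measure. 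Write $D:=\E[\W_1(\eta^N_\infty,\eta_\infty)]$. This quantity is finite, since $\eta^{N,N}_\infty\in\calP_1$ and $\eta_\infty\in\calP_1$. We obtain the self-referential inequality
\[
    D\leq C\alpha^n(C'' + D) + \wtilde{C}_{p,d}N^{-1/d}\tau^n.
\]

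Now fix $n_0$ independent of $N$ such that $C\alpha^{n_0}\leq 1/2$. Absorbing the $D$ term gives
\[
    D\leq 2C\alpha^{n_0}C''N^{-1/d}\cdot(\text{const}) + 2\wtilde{C}_{p,d}\tau^{n_0}N^{-1/d}.
\]
To make the first term decay like $N^{-1/d}$ rather than staying constant, keep $C''$ in the form $C_{p,d}\M_p(\eta_\infty)^{1/p}N^{-1/d}$. Then everything is $O(N^{-1/d})$ with constant $C'=2C\alpha^{n_0}C_{p,d}\M_p(\eta_\infty)^{1/p}+2\wtilde{C}_{p,d}\tau^{n_0}$, which is independent of $N$ because $n_0$ depends only on $C,\alpha$.

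The main obstacle is the absorption step and the interpretation of E2. One must be sure that the constant $\wtilde{C}_{p,d}$ in E1 applies with the initial condition $\eta_\infty$. This is fine because $M^{(p)}_\star$ is assumed uniform, and with $\eta_0=\eta_\infty$ the moment bounds can be checked via Lemma~\ref{lem:emp_mom}. One must also be sure that E2 holds for this particular initialization. If E2 instead forces $\E[\W_1(\eta^N_0,\eta^N_\infty)]$ to be estimated without reference to $D$ (for instance via moments of $\eta^{N,N}_\infty$), the fallback is to bound it directly by $\M_1(\eta_\infty)+\E[\M_1(\eta^N_\infty)]$. One would then choose $n=n(N)\approx \frac{\log N}{d\log(\tau/\alpha)}$ to balance $\alpha^n$ against $\tau^nN^{-1/d}$, at the cost of a worse exponent. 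I would try the absorption route first, since it yields exactly the claimed $N^{-1/d}$ rate.
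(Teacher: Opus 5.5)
Your proposal is correct and follows essentially the same route as the paper: start both systems from $\eta_\infty$, apply the three-term triangle inequality with E1 and E2, split $\E[\W_1(\eta^N_0,\eta^N_\infty)]$ through $\eta_\infty$ so that $D$ reappears multiplied by $C\alpha^n$, bound the i.i.d.\ piece by the Fournier--Guillin estimate, and absorb after fixing $n$ with $C\alpha^n\le 1/2$. Your check that $D<\infty$, which the absorption needs, and your exponent $\M_p(\eta_\infty)^{1/p}$ are details the paper's write-up leaves implicit or states loosely (it writes $\M_p(\eta_\infty)$).
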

    \begin{proof}
        For any trajectories $\eta_n,\eta^{N,N}_n$ following dynamics \eqref{eq:mean_field}, \eqref{eq:ips} respectively, with initial conditions $\eta_0=\eta_\infty$, $X^i_0\iid \eta_0$, we have
        \begin{align*}
            \E[\W_1(\eta^N_\infty,\eta_\infty)]&\leq \E[\W_1(\eta^N_\infty,\eta^N_n)] + \E[\W_1(\eta^N_n,\eta_n)] + \W_1(\eta_n,\eta_\infty)\\
            &\leq C\alpha^n\E[\W_1(\eta^N_\infty,\eta^N_0)] + \wtilde{C}_{p,d}N^{-1/d}\tau^n + \underbrace{C\alpha^n \W_1(\eta_0,\eta_\infty)}_{=0}\\
            &=C\alpha^n\E[\W_1(\eta^N_\infty,\eta^N_0)] + \wtilde{C}_{p,d}N^{-1/d}\tau^n\\
            &\leq C\alpha^n\E[\W_1(\eta^N_\infty,\eta_0)] + C\alpha^n\E[\W_1(\eta^N_0,\eta_0)] + \wtilde{C}_{p,d}N^{-1/d}\tau^n\\
            &=C\alpha^n\E[\W_1(\eta^N_\infty,\eta_\infty)] + C\alpha^n\E[\W_1(\eta^N_0,\eta_\infty)] + \wtilde{C}_{p,d}N^{-1/d}\tau^n.
        \end{align*}
        Note that $\eta^N_0\neq \eta^N_\infty$, but rather $\eta^N_0=\wtilde{\eta}^N_\infty=:m(\wtilde{X}^1_\infty, \dots, \wtilde{X}^N_\infty)$ where each $\wtilde{X}^i_\infty\iid \eta_\infty$. However, we do have from \cite{fournier2015rate} that 
        \[
            \E[\W_1(\wtilde{\eta}^N_\infty,\eta_\infty)]\leq C_{p,d}\M_p(\eta_\infty)N^{-1/d}
        \]so that
        \[
            \E[\W_1(\eta^N_\infty,\eta_\infty)]\leq C\alpha^n\E[\W_1(\eta^N_\infty,\eta_\infty)] + \wtilde{C}_{p,d}'N^{-1/d}(\tau^n + 1).
        \]Then picking $n=\lceil \log(2C)/\log(1/\alpha)\rceil$ we get
        \[
            \E[\W_1(\eta^N_\infty,\eta_\infty)]\leq 2\wtilde{C}N^{-1/d}(2C)^{\log(\tau)/\log(1/\alpha)}.
        \]
    \end{proof}

    \begin{theorem}
        Suppose Assumption~\ref{assump:ergodic} holds, and that $\eta^{N,N}_0=\eta_0^{\otimes N}$. Then there is $\kappa\in (0, 1/d)$ and $C'$ independent of $n,N$ s.t. $\forall n\geq 0$ and $N\geq 1$ we have
        \[
            \E[\W_1(\eta^N_n, \eta_n)]\leq \frac{C'}{N^\kappa}.
        \]
    \end{theorem}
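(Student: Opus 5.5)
The plan is to interpolate between two estimates: a short-time bound that grows like $\tau^n N^{-1/d}$, and a long-time bound that decays like $\alpha^n$ up to an $N^{-1/d}$ error. Then I optimize the switching time in terms of $N$.

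\textbf{Short times.} By Assumption~\ref{assump:ergodic}(E1), i.e. Proposition~\ref{prop:emp_main} with uniform moments, we have
\[
\E[\W_1(\eta^N_n,\eta_n)]\leq \wtilde{C}_{p,d}N^{-1/d}\tau^n .
\]

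\textbf{Long times.} Insert the invariant measures and use the triangle inequality:
\[
\E[\W_1(\eta^N_n,\eta_n)]\leq \E[\W_1(\eta^N_n,\eta^N_\infty)] + \E[\W_1(\eta^N_\infty,\eta_\infty)] + \W_1(\eta_\infty,\eta_n).
\]
The middle term is at most $C'N^{-1/d}$ by Proposition~\ref{prop:invar}. The last term is at most $C\alpha^n\W_1(\eta_0,\eta_\infty)$ by (E2). For the first term, (E2) gives the bound $C\alpha^n\E[\W_1(\eta^N_0,\eta^N_\infty)]$. I bound this uniformly in $N$ by
\[
\E[\W_1(\eta^N_0,\eta^N_\infty)]\leq \E[\M_1(\eta^N_0)]+\E[\M_1(\eta^N_\infty)] = \M_1(\eta_0)+\M_1(\eta^{N,1}_\infty).
\]
This uses $\eta^{N,N}_0=\eta_0^{\otimes N}$ and exchangeability. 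It requires $\sup_N \M_1(\eta^{N,1}_\infty)<\infty$, which I expect to be the \textbf{main obstacle}. I would get it from Proposition~\ref{prop:invar}: since $\M_1(\mu)=\W_1(\mu,\delta_0)$,
\[
\E[\M_1(\eta^N_\infty)]\leq \M_1(\eta_\infty)+\E[\W_1(\eta^N_\infty,\eta_\infty)]\leq \M_1(\eta_\infty)+C'N^{-1/d},
\]
which is uniform in $N$. Alternatively, I would use the uniform moment bound $M_\star$ along the stationary particle trajectory, if that is included in (E1). Either way, the long-time regime gives
\[
\E[\W_1(\eta^N_n,\eta_n)]\leq C''\alpha^n + C'N^{-1/d}.
\]

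\textbf{Optimizing the switch.} Set $\beta:=\log(1/\alpha)/(\log\tau+\log(1/\alpha))\in(0,1)$ and $n_N:=\beta\log N/(d\log(1/\alpha))$.
\begin{itemize}
\item For $n\leq n_N$, the first bound gives $N^{-1/d}\tau^{n}\leq N^{-1/d}N^{\log\tau\,\beta/(d\log(1/\alpha))}$.
\item For $n> n_N$, the second bound gives $\alpha^n\leq N^{-\beta/d}$.
\end{itemize}
A direct computation shows both exponents equal $-\kappa$ with
\[
\kappa=\frac{1}{d}\cdot\frac{\log(1/\alpha)}{\log\tau+\log(1/\alpha)}\in(0,1/d),
\]
since $\tau>1$. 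This holds up to constants coming from rounding $n_N$ (absorb a factor $\tau$) and from the $N^{-1/d}\leq N^{-\kappa}$ term. Taking the maximum over the two regimes gives $\E[\W_1(\eta^N_n,\eta_n)]\leq C'N^{-\kappa}$ uniformly in $n$ and $N$.
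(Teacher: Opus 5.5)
Your proposal is correct and follows essentially the same route as the paper. It uses the same three-term triangle inequality through $\eta^N_\infty$ and $\eta_\infty$, Proposition~\ref{prop:invar} for the middle term, and the same switching time $n_0=\log N/(d\log(\tau/\alpha))$, which yields the same $\kappa=\frac{1}{d}\frac{\log(1/\alpha)}{\log(\tau/\alpha)}$.

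The only difference is cosmetic: you bound $\E[\W_1(\eta^N_0,\eta^N_\infty)]$ by first moments, using Proposition~\ref{prop:invar} to control $\E[\M_1(\eta^N_\infty)]$. The paper instead goes through $\eta_0$ and $\eta_\infty$ with the Fournier--Guillin estimate, and both give an $N$-uniform constant.
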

    \begin{proof}
        Using the triangle inequality, plus Proposition~\ref{prop:invar}, we get
        \begin{align*}
            \E[\W_1(\eta^{N}_n, \eta_n)]&\leq \E[\W_1(\eta^{N}_n, \eta^{N}_\infty)] + \E[\W_1(\eta^{N}_\infty, \eta_\infty)] + \W_1(\eta_\infty, \eta_n)\\
            &\leq C\alpha^n\E[\W_1(\eta^{N}_0,\eta^{N}_\infty)] + \wtilde{C} N^{-1/d} + C\alpha^n\W_1(\eta_0, \eta_\infty).
        \end{align*}
        Using the fact that $X^i_0\iid \eta_0$,
        \begin{align*}
            \E[\W_1(\eta^{N}_0,\eta^{N}_\infty)]&\leq \E[\W_1(\eta^{N}_0,\eta_0)] + \W_1(\eta_0,\eta_\infty) + \E[\W_1(\eta_\infty,\eta^{N}_\infty)]
        \end{align*}
        and using \cite[Theorem~1]{fournier2015rate} again on the first term and Proposition~\ref{prop:invar} on the last term we get
        \begin{align*}
            \E[\W_1(\eta^{N}_0,\eta^{N}_\infty)]&\leq C''N^{-1/d} + \W_1(\eta_0,\eta_\infty) + \wtilde{C}N^{-1/d}\\
            &= C'''N^{-1/d} + \W_1(\eta_0,\eta_\infty).
        \end{align*}
        Therefore inserting this above, we are left with
        \[
            \E[\W_1(\eta^{N}_n, \eta_n)]\leq C_0\alpha^n + C_1N^{-1/d}
        \]for some constants $C_0,C_1$ independent of $n,N$.

        To conclude, we balance this estimate, which is good for large $n$ but has a term independent of $N$, against the estimate from Theorem~\ref{thm:poc_good}, which behaves well in $N$ for all fixed $n$ but explodes as $n\to\infty$. In precise terms, we have
        \[
            \E[\W_1(\eta^{N}_n, \eta_n)]\leq C_2\left( (\alpha^n + N^{-1/d})\wedge \tau^n N^{-1/d}\right)
        \]hence for $n_0:=(1/d)\log(N)/\log(\tau/\alpha)$, distinguishing between $n<n_0$ and $n>n_0$ establishes the proof, with
        \[
            \kappa:=\frac{1}{d}\frac{\log(1/\alpha)}{\log(\tau/\alpha)}.
        \]
    \end{proof}

    Note that $\E[\W_1(\eta^N_n, \eta_n)]$ being geometrically ergodic, independent of $N$ no less, is a strong assumption. It is stronger than asking the same of $\eta^{N,N}_n$. It would be much more desirable to study $\eta^{N,N}_n$ as this is a \emph{bona fide} linear Markov chain, and hence its ergodicity is very well understood (although dependence on $N$ is another matter). However, all we know at this level of generality is that
    \[
        \E[\W_1(\eta^N_n,\eta^N_{\infty})]\leq \frac{1}{\sqrt{N}}\W_1(\eta^{N,N}_n, \eta^{N,N}_\infty)
    \]using the $1/\sqrt{N}$-Lipschitzness of the map $x:=(x^1,\dots,x^N)\mapsto m(x)$. Assuming instead that $\eta^{N,N}_n$ is uniformly-in-$N$ ergodic, we would then get
    \[
        \E[\W_1(\eta^N_n,\eta^N_\infty)]\leq \frac{1}{\sqrt{N}}C\alpha^n \W_1(\eta^{N,N}_0, \eta^{N,N}_\infty).
    \]However, this is incompatible with the strategy of Proposition~\ref{prop:invar} as we need to relate the right side back to the left side in order to obtain a self-contained bound.

    \begin{corollary}
        Under Assumption~\ref{assump:ergodic}, we have the uniform propagation of chaos
        \[
            \W_1(\eta^{N,q}_n, \eta_n^{\otimes q})\leq C'\frac{q}{N^{\kappa}} + 2\frac{q^3}{N}.
        \]
    \end{corollary}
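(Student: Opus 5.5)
The plan is to combine the uniform-in-time empirical measure estimate of the preceding theorem with the tensorization Lemma~\ref{lem:tensorization}, in the same way Theorem~\ref{thm:poc_general} was deduced from Proposition~\ref{prop:emp_main}.

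First, I will check the hypotheses of Lemma~\ref{lem:tensorization} at each fixed time $n$. The particles $(X^1_n,\dots,X^N_n)$ are exchangeable by the Fact, since $\eta^{N,N}_0=\eta_0^{\otimes N}$ is exchangeable. Their joint law lies in $\calP_1((\R^d)^{\otimes N})$ because of the moment bound $\E[\|X^1_n\|^p]\leq M^{(p)}_\star$ with $p>1$, which is part of Assumption~\ref{assump:ergodic}. The measure $\nu=\eta_n$ lies in $\calP_1(\R^d)$ by Assumption~\ref{assump:global}. Applying the lemma with $\mu^{N,q}=\eta^{N,q}_n$ and $\nu=\eta_n$ then gives
\[
\W_1(\eta^{N,q}_n,\eta_n^{\otimes q})\leq q\,\E[\W_1(\eta^N_n,\eta_n)] + 2\frac{q^3}{N}\M_1(\eta^{N,1}_n).
\]

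Second, I will bound the first term by the preceding theorem, which gives $\E[\W_1(\eta^N_n,\eta_n)]\leq C'N^{-\kappa}$ uniformly in $n$ and $N$. This produces the term $C'q/N^{\kappa}$.

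Third, I will control $\M_1(\eta^{N,1}_n)$ uniformly in $n$. By Jensen's inequality, $\M_1(\eta^{N,1}_n)\leq \M_p(\eta^{N,1}_n)^{1/p}=\E[\|X^1_n\|^p]^{1/p}\leq (M^{(p)}_\star)^{1/p}$, and the right-hand side is a constant independent of $n$ and $N$. This gives the second term $2(M^{(p)}_\star)^{1/p}q^3/N$.

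The statement as written has a bare $2q^3/N$, so the constant $(M^{(p)}_\star)^{1/p}$ must either be absorbed into a generic constant or the moment bound normalized; I would record this explicitly rather than hide it.

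The only delicate point is uniformity in $n$, and it rests on two things. The first is the uniform moment bound $M^{(p)}_\star$, which is already built into Assumption~\ref{assump:ergodic}. The second is that the preceding theorem really does hold for all $n$, including small $n$, where one uses the $\tau^nN^{-1/d}$ branch of the estimate. Since both are supplied earlier in the paper, no new argument is needed beyond substituting these bounds.
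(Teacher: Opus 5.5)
Your proposal is correct and follows the route the paper intends. The corollary is stated without proof, but it is obtained exactly as Theorem~\ref{thm:poc_general} is: feed the uniform-in-time bound $\E[\W_1(\eta^N_n,\eta_n)]\leq C'N^{-\kappa}$ into Lemma~\ref{lem:tensorization}, and control $\M_1(\eta^{N,1}_n)$ by the uniform moment bound. Your remark that the second term should read $2(M^{(p)}_\star)^{1/p}q^3/N$ is also right, since the printed statement drops this factor, which appears explicitly in Theorem~\ref{thm:poc_general}.
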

    This is $\calO(q/N^{\kappa} \vee q^3/N)
    )$ which is worse than the coupling bound in Theorem~\ref{thm:poc_good}. However, note that above we explicitly assumed that the Lipschitz constant was greater than one so we cannot plug this back into Theorem~\ref{thm:poc_good} and we must use the more general, but worse, approach in Theorem~\ref{thm:poc_general} that doesn't use additional regularity. One exception is that if Assumption~\ref{assump:R2} is satisfied with $c<1$ but $c+C>1$, this would result in $\calO(q/N^\kappa)$ bounds.

	\section{Euler-Maruyama Scheme for McKean-Vlasov SDE}\label{sec:mv}
	In this section, we use the tools developed thus far to study propagation of chaos for a family of nonlinear Markov kernels associated to Euler-Maruyama discretizations of the following McKean-Vlasov nonlinear stochastic differential equation, recalled from \eqref{eq:mv_sde}
	\begin{equation}\label{eq:mv_sde_again}
		\dd X_t = - \nabla V(X_t)\dd t - \nabla W* \eta_t(X_t) \dd t + \sqrt{2}\dd B_t.
	\end{equation}
	This equation, and continuous-time propagation of chaos techniques for its associated particle system, are discussed in Section~\ref{subsec:related_work}. In particular, the numerical analysis of these nonlinear diffusions was originally studied in \cite{bossy1997stochastic}, and we use the same setup, although our analysis is different. Hence consider the simple first-order, explicit Euler-Maruyama scheme with discretization size $\delta>0$ associated to \eqref{eq:mv_sde_again}
	\begin{equation}\label{eq:mv_mc}
		X_{n + 1} = X_n - \delta \nabla V(X_n) - \delta\nabla W * \eta_n(X_n) + \sqrt{2 \delta}Z_n;~~~\eta_n:=\law(X_n),~Z_n\iid \calN(0, I_d)
	\end{equation}
	which defines a nonlinear Markov chain having the nonlinear Markov kernel
	\begin{equation}\label{eq:mv_kernel}
		K^\delta_\eta(x, \dd y) := \calN(x - \delta\nabla V(x) - \delta\nabla W * \eta(x), 2\delta I_d)(\dd y)
	\end{equation}
	where $\calN(m, \Sigma)$ is the $d$-dimensional multivariate normal with mean vector $m$ and covariance matrix $\Sigma$. Below, we show how the techniques developed in Section~\ref{sec:propagation_of_chaos} can be readily applied to prove propagation of chaos in the case that $V + W$ is uniformly convex. Note that $\delta$ will be fixed for this analysis, so we will suppress it in the notation when there is no chance of confusion.

	\begin{assumption}\label{assump:mv}
		Let $V,W:\R^d\to\R$. 
		\begin{assumptions}[MV]
			\item \label{assump:mv1} Assume that $\hess(V)\geq \lambda_VI_d$ and $\hess(W)\geq \lambda_W I_d$  for $\lambda_V, \lambda_W\in\R$.
			\item \label{assump:mv2} Assume that $\nabla V$ is $C_V$-Lipschitz and that $\nabla W$ is $C_W$-Lipschitz.
			\item \label{assump:mv3} $W$ is an even function.
		\end{assumptions}
	\end{assumption}

	Now we prove a basic lemma on the properties of the interaction term.
	\begin{lemma}\label{lem:conv_lip}
		Let $\mu,\nu\in\calP_1(\R^d)$.
		\begin{enumerate}
			\item Suppose Assumption~\ref{assump:mv1}, i.e.  $W:\R^d\to \R$ is s.t. $\hess(W)\geq \lambda_WI_d$ for some $\lambda_W\in\R$. Then for $\mu\in \calP(\R^d)$
			\[
				\ip{x - y}{\nabla W * \mu(x) - \nabla W*\mu(y)}\geq \lambda_W \|x - y\|^2
			\]
			\item Suppose Assumption~\ref{assump:mv2}, i.e. $\nabla W$ is Lipschitz-continuous with constant $C_W$. Then
			\[
				\|\nabla W * \mu(x) - \nabla W*\nu(y)\|\leq C_W\|x - y\| + C_W\W_1(\mu,\nu).
			\]
			\item Suppose Assumptions~\ref{assump:mv2},\ref{assump:mv3}, i.e. $\nabla W$ is Lipschitz-continuous with constant $C_W$, and that $\M_2(\eta)<\infty$. Then
			\[
				\|\nabla W * \eta(x)\|^2 \leq 2C_W^2\|x\|^2 + 2C_W^2 \M_2(\eta).
			\]
		\end{enumerate}
	\end{lemma}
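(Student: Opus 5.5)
The plan is to prove the three parts separately. Each one writes $\nabla W*\mu(x)=\int \nabla W(x-z)\mu(\dd z)$ and then applies a pointwise property of $\nabla W$ inside the integral.

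\emph{Part 1.} Uniform convexity $\hess(W)\geq\lambda_W I_d$ gives, for all $u,v\in\R^d$, the monotonicity inequality
\[
\ip{u-v}{\nabla W(u)-\nabla W(v)}\geq\lambda_W\|u-v\|^2 .
\]
To see this, integrate $t\mapsto \ip{u-v}{\hess(W)(v+t(u-v))(u-v)}$ over $t\in[0,1]$. Take $u=x-z$ and $v=y-z$, so that $u-v=x-y$ for every $z$. Integrating in $\mu(\dd z)$ and pulling the inner product outside the integral gives the claim. The only point to check is integrability of $\nabla W(x-z)$ in $z$. This holds whenever $\nabla W$ has linear growth and $\mu\in\calP_1$; otherwise the statement is read for measures where the convolution is defined.

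\emph{Part 2.} Split the difference with the triangle inequality:
\[
\|\nabla W*\mu(x)-\nabla W*\nu(y)\|\leq\|\nabla W*\mu(x)-\nabla W*\mu(y)\|+\|\nabla W*\mu(y)-\nabla W*\nu(y)\|.
\]
\begin{itemize}
\item The first term is at most $C_W\|x-y\|$, by the Lipschitz bound applied pointwise under the integral.
\item For the second term, take an optimal coupling $\gamma\in\calC(\mu,\nu)$. Then $\nabla W*\mu(y)-\nabla W*\nu(y)=\int(\nabla W(y-z)-\nabla W(y-z'))\gamma(\dd z,\dd z')$. Its norm is at most $C_W\int\|z-z'\|\,\dd\gamma=C_W\W_1(\mu,\nu)$.
\end{itemize}
Equivalently, each coordinate of $z\mapsto\nabla W(y-z)/C_W$ is 1-Lipschitz, so Kantorovich duality also works. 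The coupling route avoids losing a dimension factor from working coordinate by coordinate, so I would use it.

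\emph{Part 3.} Evenness of $W$ makes $\nabla W$ odd, so $\nabla W(0)=0$. The Lipschitz bound then gives $\|\nabla W(x-z)\|\leq C_W\|x-z\|\leq C_W(\|x\|+\|z\|)$. Jensen's inequality gives $\|\nabla W*\eta(x)\|^2\leq\int\|\nabla W(x-z)\|^2\eta(\dd z)$. Combining this with $(a+b)^2\leq 2a^2+2b^2$ yields $2C_W^2\|x\|^2+2C_W^2\M_2(\eta)$.

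None of the steps is a real obstacle. The only delicate points are the integrability and measurability justifications when interchanging integrals, and the observation that evenness is used only to get $\nabla W(0)=0$.
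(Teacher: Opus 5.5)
Your proposal is correct and takes essentially the same route as the paper. Part 1 integrates the pointwise monotonicity of $\nabla W$ against $\mu$. Part 2 uses the triangle inequality plus a coupling bound; the paper takes an arbitrary coupling and then the infimum, which is equivalent to your optimal coupling. Part 3 combines Jensen, $\nabla W(0)=0$ from evenness, and $(a+b)^2\le 2a^2+2b^2$.
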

	\begin{proof}
		\hfill 
		\begin{enumerate}
			\item We have
			\begin{align*}
				&\ip{x - y}{\nabla W*\mu(x) - \nabla W*\mu(y)}\\
				&= \E_\mu[\ip{x - y}{\nabla W(x - Z) - \nabla W(y - Z)}]\\
				&= \E_\mu[\ip{(x - Z) -(y - Z)}{\nabla W(x - Z) - \nabla W(y - Z)}]\\
				&\geq \lambda_W\E_\mu[ \|(x - Z) - (y - Z)\|^2] = \lambda_W \|x - y\|^2
			\end{align*}
			\item Consider
			\begin{align*}
				&\|\nabla W*\mu(x) - \nabla W*\nu(y)\|\\
				&\leq \|\nabla W*\mu(x) - \nabla W*\mu(y)\| + \|\nabla W*\mu(y) - \nabla W*\nu(y)\|\\
				&= \left\|\int \nabla W(x - z)\mu(\dd z) - \int \nabla W(y - z')\mu(\dd z')\right\|\cdots\\
				&+ \left\|\int \nabla W(y - z)\mu(\dd z) - \int \nabla W(y - z')\nu(\dd z')\right\|\\
				&= \left\|\int [\nabla W(x - z) - \nabla W(y-z)]\mu(\dd z)\right\| + \left\|\int \nabla W(y - z)\mu(\dd z) - \int \nabla W(y - z')\nu(\dd z')\right\|
			\end{align*}
			The first term can be bounded using Lipschitzness of $W$ via
			\[
				\left\|\int [\nabla W(x - z) - \nabla W(y-z)]\mu(\dd z)\right\|\leq \int\|\nabla W(x - z) - \nabla W(y - z)\|\mu(\dd z)\leq C_W\|x - y\|.
			\]For the second term, let $\gamma\in\calC(\mu,\nu)$ be any coupling of $\mu,\nu$; then
			\begin{align*}
				&\left\|\int \nabla W(y - z)\mu(\dd z) - \int \nabla W(y - z')\nu(\dd z')\right\|\\
				&= \left\|\int \nabla W(y - z)\gamma(\dd z, \dd z') - \int \nabla W(y - z')\gamma(\dd z, \dd z')\right\|\\
				&=\left\|\int [\nabla W(y - z) - \nabla W(y - z')]\gamma(\dd z, \dd z')\right\|\\
				&\leq \int \|\nabla W(y - z) - \nabla W(y - z')\|\gamma(\dd z, \dd z')\\
				&\leq C_W \int \|z - z'\|\gamma(\dd z, \dd z').
			\end{align*}
			Taking the infimum over all such couplings yields
			\[
				\|\nabla W*\mu(y) - \nabla W*\nu(y)\|\leq C_W \W_1(\mu,\nu)
			\]
			\item Using Lipzchitzness of $\nabla W$ and evenness of $W$, which implies $\nabla W(0)=0$, we get
			\begin{align*}
				\|\nabla W * \eta(x)\|^2&= \left\|\int \nabla W(x - y)\eta(\dd y)\right\|^2\\
				&\leq \int \|\nabla W(x - y)\|^2\eta(\dd y) \\
				&\leq C_W^2 \int \|x-y\|^2 \eta(\dd y)\\
				&\leq C_W^2 2\|x\|^2 + 2C_W^2 \M_2(\eta).
			\end{align*}
		\end{enumerate}
	\end{proof}

	\subsection{Regularity}
	Now we can prove a central result establishing the regularity of $(x,\mu)\mapsto K_\mu(x,\bullet)$ in $\W_2$, from which estimating the contraction coefficient of $K_\bullet$ will follow easily.

	\begin{proposition}\label{prop:mv_wass}
		Suppose that Assumption~\ref{assump:mv} holds, and let $\mu,\nu\in \calP_1(\R^d)$. Then
		\[
			\W_2(K_\mu(x, \bullet), K_\nu(y, \bullet))\leq \sqrt{1 - 2\delta(\lambda_V + \lambda_W) + \delta^2(C_V + C_W)^2}\|x - y\| + \delta C_W\W_1(\mu,\nu).
		\]
	\end{proposition}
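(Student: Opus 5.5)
Both kernels are Gaussians with the same covariance $2\delta I_d$, so I would use the synchronous coupling. Take $Z\sim\calN(0,I_d)$ and set
\[
	X' = x - \delta\nabla V(x) - \delta\nabla W*\mu(x) + \sqrt{2\delta}Z,\qquad Y' = y - \delta\nabla V(y) - \delta\nabla W*\nu(y) + \sqrt{2\delta}Z .
\]
The noise cancels, so $\W_2(K_\mu(x,\bullet),K_\nu(y,\bullet))\le \|m_\mu(x)-m_\nu(y)\|$, where $m_\eta(x):=x-\delta\nabla V(x)-\delta\nabla W*\eta(x)$ is the mean. (For translates of a common law this is in fact an equality.) The problem therefore reduces to a deterministic bound on $\|m_\mu(x)-m_\nu(y)\|$.

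I would split the mean difference by inserting $m_\mu(y)$:
\[
	\|m_\mu(x)-m_\nu(y)\|\le \|m_\mu(x)-m_\mu(y)\| + \|m_\mu(y)-m_\nu(y)\|.
\]
The second term equals $\delta\|\nabla W*\mu(y)-\nabla W*\nu(y)\|$. Part 2 of Lemma~\ref{lem:conv_lip}, taken with $x=y$ (its proof in fact isolates exactly this piece), bounds it by $\delta C_W\W_1(\mu,\nu)$. This produces the second term of the claim.

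For the first term, I write $G:=\nabla V+\nabla W*\mu$, so that $m_\mu(x)-m_\mu(y)=(x-y)-\delta(G(x)-G(y))$. Expanding the square gives
\[
	\|m_\mu(x)-m_\mu(y)\|^2=\|x-y\|^2-2\delta\ip{x-y}{G(x)-G(y)}+\delta^2\|G(x)-G(y)\|^2 .
\]
For the cross term, Assumption~\ref{assump:mv1} gives $\ip{x-y}{\nabla V(x)-\nabla V(y)}\ge\lambda_V\|x-y\|^2$ by integrating the Hessian along the segment. Part 1 of Lemma~\ref{lem:conv_lip} gives the analogous bound with $\lambda_W$ for the convolution term. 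Together the cross term is at most $-2\delta(\lambda_V+\lambda_W)\|x-y\|^2$. For the quadratic term, $\nabla V$ is $C_V$-Lipschitz by Assumption~\ref{assump:mv2}, and $\nabla W*\mu$ is $C_W$-Lipschitz by part 2 of Lemma~\ref{lem:conv_lip} with $\nu=\mu$. Hence $\|G(x)-G(y)\|\le (C_V+C_W)\|x-y\|$. Taking square roots yields the factor $\sqrt{1-2\delta(\lambda_V+\lambda_W)+\delta^2(C_V+C_W)^2}$.

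No step here is a genuine obstacle; the argument is elementary once the synchronous coupling is chosen. The point needing care is the order of the decomposition. The convexity and co-Lipschitz estimates must be applied to the same measure $\mu$ at both points, inside the squared expansion, to get the improved factor. The measure change $\mu\to\nu$ has to be separated out beforehand by the triangle inequality, since it carries no sign information. I would also remark that the expression under the square root is nonnegative: by Cauchy–Schwarz, $\lambda_V+\lambda_W\le C_V+C_W$ whenever $\lambda_V+\lambda_W\ge 0$, so it is at least $(1-\delta(C_V+C_W))^2$ in that case, and it is trivially positive otherwise. Finally, I would note that $\W_1\le\W_2$, so the same bound holds in $\W_1$. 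This is the form needed to verify Assumption~\ref{assump:R2} with $c=\sqrt{1-2\delta(\lambda_V+\lambda_W)+\delta^2(C_V+C_W)^2}$ and $C=\delta C_W$.
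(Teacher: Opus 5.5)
Your proposal is correct and follows essentially the same route as the paper. You identify $\W_2$ with the distance between the Gaussian means (the paper invokes the Gaussian $\W_2$ formula, you use the synchronous coupling, which gives the same identity), split at $K_\mu(y,\bullet)$ by the triangle inequality, expand the square using Lemma~\ref{lem:conv_lip} with the convexity and Lipschitz assumptions, and bound the measure-change term by $\delta C_W\W_1(\mu,\nu)$. Your closing remark that $\W_1\le\W_2$ gives Assumption~\ref{assump:R2} with $C=\delta C_W$ is also right, and is slightly sharper than the $\sqrt{2}\delta C_W$ stated in the paper's subsequent corollary.
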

	\begin{proof}
		Since $K_\eta(x, \bullet)$ is Gaussian, we can use the formula for the optimal $\W_2$-distance between two multivariate normal distributions to conclude that
		\[
			\W_2(K_\mu(x, \bullet), K_\nu(y, \bullet))=\|x -\delta \nabla V(x) - \delta\nabla W*\mu(x) - (y - \delta\nabla V(y) - \delta\nabla W*\nu(y))\|.
		\]We will use the triangle inequality to separately estimate
		\[
			\W_2(K_\mu(x, \bullet), K_\nu(y, \bullet))\leq \underbrace{\W_2(K_\mu(x, \bullet), K_\mu(y, \bullet))}_{(1)} + \underbrace{\W_2(K_\mu(y, \bullet), K_\nu(y, \bullet))}_{(2)}.
		\]For $(1)$, we have
		\begin{align*}
			\W^2_2(K_\mu(x, \bullet), K_\mu(y, \bullet))&= \|x - y - \delta(\nabla V(x) + \nabla W*\mu(x) - (\nabla V(y) + \nabla W*\mu(y))\|^2\\
			&= \|x - y\|^2 - 2\delta\ip{x - y}{\nabla V(x) + \nabla W*\mu(x) - (\nabla V(y) + \nabla W*\mu(y))} +\cdots \\
			& \cdots +\delta^2 \|\nabla V(x) + \nabla W*\mu(x) - (\nabla V(y) + \nabla W*\mu(y))\|^2.
		\end{align*}
		The last term can be straightforwardly estimated by $\delta^2(C_V + C_W)^2\|x-y\|^2$ from the Lipschitz assumptions and Lemma~\ref{lem:conv_lip}. For the middle term, we have
		\begin{align*}
			&2\delta\ip{x - y}{\nabla V(x) + \nabla W*\mu(x) - (\nabla V(y) + \nabla W*\mu(y))}\\
			&= 2\delta \ip{x - y}{\nabla V(x) - \nabla V(y)} + 2\delta \ip{x - y}{\nabla W*\mu(x)  - \nabla W*\mu(y))}\\
			&\geq 2\delta(\lambda_V + \lambda_W)\|x - y\|^2
		\end{align*}
		using Lemma~\ref{lem:conv_lip}. Hence for $(1)$ we have
		\[
			\W_2(K_\mu(x, \bullet), K_\mu(y, \bullet)\leq \sqrt{1 - 2\delta(\lambda_V + \lambda_W) + \delta^2(C_V + C_W)^2}\|x - y\|.
		\]For $(2)$, we can use Lemma~\ref{lem:conv_lip} to obtain
		\begin{align*}
			\W_2(K_\mu(y, \bullet), K_\nu(y, \bullet)) = \delta\|\nabla W*\mu(y) - \nabla W*\nu(y)\|\leq \delta C_W\W_1(\mu,\nu).
		\end{align*}
		Hence putting these together yields the result.
	\end{proof}

	Remarkably, the estimate above applies equally to $\W_1$ as $\W_2$.
	\begin{corollary}
		Under the conditions of the Proposition~\ref{prop:mv_wass}, we have
		\[
			\W_1(K_\mu(x, \bullet), K_\nu(y, \bullet))\leq \sqrt{1 - 2\delta(\lambda_W + \lambda_V)+ \delta^2(C_V + C_W)^2}\|x - y\| + \sqrt{2}\delta C_W\W_1(\mu,\nu).
		\]
	\end{corollary}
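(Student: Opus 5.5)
The plan is to derive the corollary from Proposition~\ref{prop:mv_wass} using $\W_1\leq \W_2$. Since $\W_1\leq\W_2$ by Jensen (or Cauchy--Schwarz) applied to any coupling, we immediately get
\[
    \W_1(K_\mu(x,\bullet),K_\nu(y,\bullet))\leq \W_2(K_\mu(x,\bullet),K_\nu(y,\bullet)).
\]
Applying Proposition~\ref{prop:mv_wass} then yields the claim with the smaller constant $\delta C_W$ in front of $\W_1(\mu,\nu)$. Since $\delta C_W\leq\sqrt{2}\delta C_W$, this is stronger than the stated inequality, which therefore follows.

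If I wanted an argument that does not pass through the $\W_2$ bound, I would use the structure of the kernel directly. Both measures are Gaussians with the same covariance $2\delta I_d$, so the synchronous (translation) coupling, which uses the same noise $\sqrt{2\delta}Z$, is admissible. Under it the transported distance is the constant $\|m_\mu(x)-m_\nu(y)\|$, where
\[
    m_\eta(x):=x-\delta\nabla V(x)-\delta\nabla W*\eta(x).
\]
Consequently $\W_1$ and $\W_2$ coincide for these two measures, both being equal to $\|m_\mu(x)-m_\nu(y)\|$. Indeed, the mean-difference lower bound $\W_1\geq\|\E[Y]-\E[Y']\|$ shows the translation coupling is optimal for $\W_1$ as well. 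One then repeats the split of Proposition~\ref{prop:mv_wass}. The first term is the $x\mapsto y$ change with fixed $\mu$, controlled by expanding the square and using Lemma~\ref{lem:conv_lip}(1) together with the Lipschitz bounds. The second is the change of measure at fixed $y$, controlled by Lemma~\ref{lem:conv_lip}(2).

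There is no real obstacle in either argument. The only point needing care is recognizing that the $\sqrt{2}$ in the statement is slack, presumably inherited from a cruder comparison, and is not required. I would also verify that the quantity under the square root is nonnegative. This holds automatically because it dominates $\|m_\mu(x)-m_\mu(y)\|^2/\|x-y\|^2\geq0$.
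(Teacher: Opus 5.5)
Your proposal is correct and matches the paper's proof, which simply bounds $\W_1$ by $\W_2$ and then applies Proposition~\ref{prop:mv_wass}. You are also right that this argument actually gives the constant $\delta C_W$, so the $\sqrt{2}$ in the statement is unnecessary slack.
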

	\begin{proof}
		Just use
		\[
			\W_1(K_\mu(x, \bullet), K_\nu(y, \bullet))\leq \W_2(K_\mu(x, \bullet), K_\nu(y, \bullet)).
		\]
	\end{proof}

	Finally, Lemma~\ref{lem:tau} directly implies that we can estimate the contraction coefficient of $K^\delta_\bullet$ using the results above.
	\begin{theorem}\label{thm:mv_lip}
		Let $K^\delta_\bullet$ be the Markov kernel \eqref{eq:mv_kernel}, and suppose that Assumption~\ref{assump:mv} holds. Then 
		\begin{equation}\label{eq:tau_mv}
			\tau_1(K^\delta_\bullet) \leq \sqrt{1 - 2\delta(\lambda_V + \lambda_W) + \delta^2(C_V + C_W)^2} + \sqrt{2}\delta C_W.
		\end{equation}
	\end{theorem}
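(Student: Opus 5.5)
The bound \eqref{eq:tau_mv} follows from Lemma~\ref{lem:tau} once we check that $K^\delta_\bullet$ satisfies Assumption~\ref{assump:R2} with explicit constants $c$ and $C$. Concretely, the preceding Corollary (the $\W_1$ version of Proposition~\ref{prop:mv_wass}) gives, for all $x,y\in\R^d$ and $\mu,\nu\in\calP_1(\R^d)$,
\[
    \W_1(K^\delta_\mu(x,\bullet),K^\delta_\nu(y,\bullet))\leq c\|x-y\| + C\W_1(\mu,\nu),
\]
with
\[
    c:=\sqrt{1-2\delta(\lambda_V+\lambda_W)+\delta^2(C_V+C_W)^2},\qquad C:=\sqrt{2}\,\delta C_W.
\]
This is exactly Assumption~\ref{assump:R2}, and Lemma~\ref{lem:tau} then yields $\tau_1(K^\delta_\bullet)\leq c+C$, which is the claim.

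Three checks are needed for this to be rigorous.

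\textbf{The square root is real.} We need $1-2\delta(\lambda_V+\lambda_W)+\delta^2(C_V+C_W)^2\geq 0$. By Cauchy--Schwarz, the Lipschitz bound in Assumption~\ref{assump:mv2} and the convexity bounds of Lemma~\ref{lem:conv_lip} force $\lambda_V\leq C_V$ and $\lambda_W\leq C_W$. Hence
\[
    1-2\delta(\lambda_V+\lambda_W)+\delta^2(C_V+C_W)^2\geq \bigl(1-\delta(C_V+C_W)\bigr)^2\geq 0 .
\]
The expression under the root is in fact the squared Lipschitz constant of the drift map $x\mapsto x-\delta\nabla V(x)-\delta\nabla W*\mu(x)$.

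\textbf{$\tau_1$ is well defined.} Assumption~\ref{assump:global} requires that $\mu K^\delta_\mu\in\calP_1(\R^d)$ whenever $\mu\in\calP_1(\R^d)$. This holds because the Gaussian mean is affine-growth: Lipschitz gradients give $\|\nabla V(x)\|\leq \|\nabla V(0)\|+C_V\|x\|$, and $\nabla W*\mu$ grows at most linearly in $\|x\|$ with a constant depending on $\M_1(\mu)$.

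\textbf{Applying Lemma~\ref{lem:tau}.} Its proof bounds term $(2)$ using the $\W_1$-contraction of the linear kernel $K_\nu$ by its pointwise Lipschitz constant in $x$ (Proposition~\ref{prop:wass_contract}). Here that constant is $c$, since setting $\mu=\nu$ in Assumption~\ref{assump:R2} removes the $C$ term. Term $(1)$ is bounded by $C$, since setting $x=y$ removes the $c$ term.

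The only real obstacle is bookkeeping: making sure the constant on $\W_1(\mu,\nu)$ carried over from Proposition~\ref{prop:mv_wass} is the one appearing in \eqref{eq:tau_mv}. Proposition~\ref{prop:mv_wass} itself gives $\delta C_W$, which the Corollary states as $\sqrt{2}\,\delta C_W$. The smaller constant $\delta C_W$ would give a sharper bound, so the stated estimate with $\sqrt{2}$ holds a fortiori. Everything else is a direct citation.
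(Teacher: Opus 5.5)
Your proposal is correct and matches the paper's argument: the $\W_1$ corollary of Proposition~\ref{prop:mv_wass} verifies Assumption~\ref{assump:R2} with $c=\sqrt{1-2\delta(\lambda_V+\lambda_W)+\delta^2(C_V+C_W)^2}$ and $C=\sqrt{2}\,\delta C_W$, and Lemma~\ref{lem:tau} then gives $\tau_1(K^\delta_\bullet)\leq c+C$. Your extra checks are sound details the paper leaves implicit: nonnegativity of the radicand via $\lambda_V\leq C_V$ and $\lambda_W\leq C_W$, finiteness of first moments, and the remark that the $\sqrt{2}$ is superfluous because $\W_1\leq\W_2$ already gives $\delta C_W$. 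One small correction: the radicand is only an upper bound on the squared Lipschitz constant of the drift map, not equal to it.
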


	\subsection{Moments}
	The second ingredient necessary for the application of Theorem~\ref{thm:poc_good} is moment control. We approach this using a Lyapunov condition as discussed.

	\begin{proposition}\label{prop:mv_mom}
		Let $K^\delta_\eta$ be the nonlinear Markov kernel from \eqref{eq:mv_kernel} and suppose Assumption~\ref{assump:mv} holds. Then
		\[
			K^\delta_\eta\V_2(x)\leq a\V_2(x) + b\eta(\V_2) + c
		\]where
		\begin{align*}
			a&:=1 + \delta(2 - 2(\lambda_V +\lambda_W)) + 4\delta^2(C_V^2 + C_W^2)\\
			b&:=\delta(2 + 4\delta)C_W^2\\
			c&:= (1 + 4\delta) \delta\|\nabla V(0)\|^2 + 2d\delta.
		\end{align*}
	\end{proposition}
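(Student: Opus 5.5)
The plan is to compute $K^\delta_\eta\V_2(x)$ exactly, using that $K^\delta_\eta(x,\bullet)$ is Gaussian, and then bound the drift terms with Assumption~\ref{assump:mv} and Lemma~\ref{lem:conv_lip}. We may assume $\M_2(\eta)<\infty$, since otherwise the right-hand side is $+\infty$ and there is nothing to prove. Write $m(x):=x-\delta g(x)$ with $g(x):=\nabla V(x)+\nabla W*\eta(x)$. For $Y\sim\calN(m(x),2\delta I_d)$ we have $\E\|Y\|^2=\|m(x)\|^2+2d\delta$. Expanding gives
\[
K^\delta_\eta\V_2(x)=\|x\|^2-2\delta\ip{x}{\nabla V(x)}-2\delta\ip{x}{\nabla W*\eta(x)}+\delta^2\|g(x)\|^2+2d\delta .
\]
The term $2d\delta$ already accounts for part of $c$.

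\textbf{First-order terms.} Center each gradient at the origin.
\begin{itemize}
\item For $V$, write $\ip{x}{\nabla V(x)}=\ip{x-0}{\nabla V(x)-\nabla V(0)}+\ip{x}{\nabla V(0)}$. The first piece is at least $\lambda_V\|x\|^2$ by convexity (\ref{assump:mv1}).
\item For $W$, write $\ip{x}{\nabla W*\eta(x)}=\ip{x-0}{\nabla W*\eta(x)-\nabla W*\eta(0)}+\ip{x}{\nabla W*\eta(0)}$. The first piece is at least $\lambda_W\|x\|^2$ by Lemma~\ref{lem:conv_lip}(1).
\item Bound the leftover cross terms with $-2\delta\ip{x}{v}\le\delta\|x\|^2+\delta\|v\|^2$.
\end{itemize}
For the $W$ cross term we need $\|\nabla W*\eta(0)\|$. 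Since $\nabla W(0)=0$ by evenness (\ref{assump:mv3}) and $\nabla W$ is $C_W$-Lipschitz, we get $\|\nabla W*\eta(0)\|\le\int\|\nabla W(-z)\|\eta(\dd z)\le C_W\M_1(\eta)$. Jensen then gives $\|\nabla W*\eta(0)\|^2\le C_W^2\eta(\V_2)$. Collecting, the first-order contribution is at most
\[
\delta(2-2(\lambda_V+\lambda_W))\|x\|^2+\delta\|\nabla V(0)\|^2+\delta C_W^2\eta(\V_2).
\]

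\textbf{Second-order term.} Use $\|g\|^2\le 2\|\nabla V(x)\|^2+2\|\nabla W*\eta(x)\|^2$.
\begin{itemize}
\item Lipschitzness (\ref{assump:mv2}) gives $\|\nabla V(x)\|^2\le 2C_V^2\|x\|^2+2\|\nabla V(0)\|^2$.
\item Lemma~\ref{lem:conv_lip}(3) gives $\|\nabla W*\eta(x)\|^2\le 2C_W^2\|x\|^2+2C_W^2\eta(\V_2)$.
\end{itemize}
This yields $4\delta^2(C_V^2+C_W^2)\|x\|^2+4\delta^2\|\nabla V(0)\|^2+4\delta^2C_W^2\eta(\V_2)$.

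\textbf{Collecting.} Adding everything:
\begin{itemize}
\item the coefficient of $\V_2(x)$ is exactly $a$;
\item the coefficient of $\eta(\V_2)$ is $\delta(1+4\delta)C_W^2\le b$;
\item the constant is $(1+4\delta)\delta\|\nabla V(0)\|^2+2d\delta=c$.
\end{itemize}

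The only delicate point is how to handle the inner products. One has to center at the origin and apply the one-sided convexity bound from Lemma~\ref{lem:conv_lip}, rather than Cauchy--Schwarz, because Cauchy--Schwarz would lose the $-2\delta(\lambda_V+\lambda_W)$ term. One also needs the evenness of $W$ so that the interaction term contributes only through $\eta(\V_2)$ and not through an extra constant.
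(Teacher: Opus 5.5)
Your proof is correct and follows essentially the same route as the paper: compute the Gaussian second moment exactly, center the gradients at the origin to use the one-sided convexity bounds, apply Young's inequality to the cross terms, and use Lemma~\ref{lem:conv_lip}(3) for the quadratic drift term. The only difference is that you bound $\|\nabla W*\eta(0)\|^2\le C_W^2\M_1(\eta)^2\le C_W^2\eta(\V_2)$ directly via Jensen, whereas the paper applies Lemma~\ref{lem:conv_lip}(3) at $x=0$ and gets $2C_W^2\M_2(\eta)$; this is why you obtain the slightly sharper coefficient $\delta(1+4\delta)C_W^2\le b$ instead of $b$ itself.
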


	\begin{proof}
		Recall first that if $Y\sim K^\delta_\eta(x, \bullet)$, then
		\[
			Y = x - \delta\nabla V(x) - \delta \nabla W * \eta(x) + \sqrt{2\delta}Z;~~~Z\sim \calN(0, I_d).
		\]We will use the fact 
		\[
			K^\delta_\eta \V_2(x)=\E_{Y\sim K^\delta_\eta(x, \bullet)}[\|Y\|^2]
		\] to work directly with the 2-norm. Moreover, we will use the fact that for $u\in\R^d,v\in\R$
		\[
			\E[\|u + vZ\|^2] = \E[\|u\|^2] + 2\E[\ip{u}{vZ}] + v^2 \E[\|Z\|^2]= \|u\|^2 + v^2\E[\|Z\|^2],
		\]which follows from linearity of expectations and that $\E[Z]=0$, to obtain
		\begin{align*}
			\E[\|Y\|^2]=& \|x\|^2 - 2\delta\ip{x}{\nabla V(x)} - 2\delta \ip{x}{\nabla W*\eta(x)}+ \cdots \\
			&+ \delta^2 \|\nabla V(x)\|^2 + \delta^2 \|\nabla W*\eta(x)\|^2 + 2\delta^2\ip{\nabla V(x)}{\nabla W*\eta(x)} + 2\delta \E[\|Z\|^2].
		\end{align*}
		Now recall that $\nabla V$ is Lipschitz so $\|\nabla V(x)\|\leq C_V\|x\| + \|\nabla V(0)\|$. Also, recall that $W$ is even by assumption $\implies\nabla W(0) = 0$, so $\|\nabla W(x)\|\leq C_W\|x\|$. From Lemma~\ref{lem:conv_lip}, we have that $\|\nabla W*\eta(x)\|^2\leq 2C_W^2 \|x\|^2 + 2C_W^2 \M_2(\eta)$.
		Also, using Cauchy-Schwarz and Young's inequality for products, we have
		\[
			\ip{\nabla V(x)}{\nabla W*\eta(x)}\leq \|\nabla V(x)\|\|\nabla W*\eta (x)\|\leq \frac{1}{2}\|\nabla V(x)\|^2 + \frac{1}{2}\|\nabla W*\eta(x)\|^2
		\]
		and
		\begin{align*}
			\ip{x}{\nabla V(x)} &= \ip{x - 0}{\nabla V(x) - \nabla V(0) + \nabla V(0)}\\
			&= \ip{x - 0}{\nabla V(x)-\nabla V(0)} + \ip{x}{\nabla V(0)}\\
			&\geq \lambda_V \|x\|^2 + \ip{x}{\nabla V(0)}.
		\end{align*}
		Moreover
		\[
			|\ip{x}{\nabla V(0)}|\leq \|x\|\|\nabla V(0)\| \leq \frac{1}{2}\|x\|^2 + \frac{1}{2}\|\nabla V(0)\|^2
		\]and
		\[
			|\ip{x}{\nabla W*\eta(0)}|\leq \|x\|\|\nabla W*\eta(0)\| \leq  \frac{1}{2}\|x\|^2 + C^2_W\M_2(\eta).
		\]
		So putting this all together we get
		\begin{align*}
			\E[\|Y\|^2] \leq& \|x\|^2 - 2\delta\ip{x}{\nabla V(x)} - 2\delta \ip{x}{\nabla W*\eta(x)} + \delta^2 \|\nabla V(x)\|^2 + \delta^2 \|\nabla W*\eta (x)\|^2 + \cdots\\
			&+ 2\delta^2\ip{\nabla V(x)}{\nabla W*\eta(x)} + 2\delta \E[\ip{Z}{Z}]\\
			\leq& \|x\|^2 - 2\delta(\lambda_V + \lambda_W)\|x\|^2 - 2\delta\ip{x}{\nabla V(0)} - 2\delta \ip{x}{\nabla W*\eta(0)} + \cdots \\
			&+ 2\delta^2\|\nabla V(x)\|^2 + 2\delta^2\|\nabla W*\eta(x)\|^2 + 2\delta d\\
			\leq &\|x\|^2 - 2\delta(\lambda_V + \lambda_W)\|x\|^2 + \delta\|x\|^2 + \delta \|\nabla V(0)\|^2 + \delta \|x\|^2 + 2\delta C^2_W\M_2(\eta) + \cdots \\
			&+ 4\delta^2C_V^2\|x\|^2 + 4\delta^2\|\nabla V(0)\|^2 + 4\delta^2C_W^2\|x\|^2 + 4\delta^2 C_W^2 M_2(\eta) + 2\delta d\\
			&= \underbrace{(1 + \delta(2 - 2(\lambda_V +\lambda_W)) + \delta^2(4C_V^2 + 4C_W^2))}_{a}\|x\|^2 +\cdots \\
			&+ \underbrace{\delta(2 + 4\delta) C_W^2}_{b}\M_2(\eta) + \underbrace{\delta(1 + 4\delta)\|\nabla V(0)\|^2 + 2d\delta}_{c}
		\end{align*}
		and the proof is completed by noting that $\M_2(\eta)\equiv\eta(\V_2)$.
	\end{proof}

	\subsection{Propagation of Chaos}
	Now we can put the previous results together in a statement of the propagation of chaos for the Markov chain \eqref{eq:mv_mc}.
	\begin{theorem}
		Suppose that Assumption~\ref{assump:mv} is true. Then $\tau_1(K^\delta_\bullet)<\infty$ and is explicitly bounded by \eqref{eq:tau_mv}, and 
		if $\E[\|X^1_0\|^2]<\infty$, then there are constants $M^{(2)}_n$ such that
	 	$\E[\|X^1_n\|^2]\leq M^{(2)}_n$ for every $n\geq 0$ and therefore
	 	\[
	 		\W_1(\eta^{N,q}_n, \eta^{\otimes q}_n)\leq C_{2,d}\frac{q}{N^{1/d}} \sum_{k=0}^n \left(M^{(2)}_k\right)^{1/2}\tau_1(K^\delta_\bullet)^{n-k}.
	 	\]If, additionally, $\lambda_V,\lambda_W,C_V, C_W,~\delta$ are such that $\tau_1(K^\delta_\bullet)<1$ and $a + b<1$ from Proposition~\ref{prop:mv_mom}, in particular $\lambda_V + \lambda_W > 0$ and $V + W$ is sufficiently convex, then this propagation of chaos is uniform in $n$.
	\end{theorem}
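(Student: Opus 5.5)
The plan is to verify the two ingredients of Section~\ref{sec:propagation_of_chaos}, Lipschitz regularity and moment control, for $K^\delta_\bullet$, and then invoke the general propagation of chaos machinery. Regularity is essentially already done. The corollary to Proposition~\ref{prop:mv_wass} shows that $K^\delta_\bullet$ satisfies Assumption~\ref{assump:R2} with
\[
c=\sqrt{1-2\delta(\lambda_V+\lambda_W)+\delta^2(C_V+C_W)^2},\qquad C=\sqrt2\,\delta C_W .
\]
Lemma~\ref{lem:tau} (equivalently Theorem~\ref{thm:mv_lip}) then gives $\tau_1(K^\delta_\bullet)\le c+C<\infty$, which is \eqref{eq:tau_mv}. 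Since the kernel is time-homogeneous, $\Phi_{k,n}=\Phi^{\,n-k}$, and composing Lipschitz maps yields $\tau_1(\Phi_{k,n})\le \tau_1(K^\delta_\bullet)^{n-k}$.

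For moments, Proposition~\ref{prop:mv_mom} shows that Assumption~\ref{assump:lyap_mom} holds with $V=\V_2$ and the constants $a,b,c$ given there, which are constant in $n$. Given $\E[\|X^1_0\|^2]=\E[\eta^N_0(\V_2)]<\infty$, I apply Proposition~\ref{prop:lyap_mom_ips} together with Lemma~\ref{lem:emp_mom}, which gives $\E[\|X^1_n\|^2]=\E[\Phi_n[\eta^N_{n-1}](\V_2)]$. This yields the explicit bound
\[
M^{(2)}_n:=(a+b)^n\E[\|X^1_0\|^2]+c\sum_{k=1}^n(a+b)^{n-k}.
\]
Taking $p=2>1$ is admissible, and Assumption~\ref{assump:global} is satisfied for $d\ge3$ because $2\neq d/(d-1)$. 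Proposition~\ref{prop:emp_main} then gives
\[
\E[\W_1(\eta^N_n,\eta_n)]\le C_{2,d}N^{-1/d}\sum_{k=0}^n (M^{(2)}_k)^{1/2}\tau_1(K^\delta_\bullet)^{n-k}.
\]

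To pass to $\eta^{N,q}_n$ without the $q^3/N$ term of Theorem~\ref{thm:poc_general}, I will use the synchronous coupling of Theorem~\ref{thm:poc_good}, which applies because Assumption~\ref{assump:R2} holds. That argument gives $\W_1(\eta^{N,q}_n,\eta_n^{\otimes q})\le qC\sum_{k<n}c^{n-k-1}\E[\W_1(\eta^N_k,\eta_k)]$. The main obstacle is collapsing this double sum into the single sum of the statement. Using $c\le\tau:=c+C$, the coefficient of $(M^{(2)}_j)^{1/2}$ is at most
\[
C\sum_{k=j}^{n-1}c^{n-k-1}\tau^{k-j}\le\tau^{n-j}\sum_{m\ge0}(c/\tau)^m\cdot\frac{C}{\tau}=\tau^{n-j},
\]
because $\sum_{m\ge0}(c/\tau)^m=\tau/(\tau-c)=\tau/C$. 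So the coefficient of $(M^{(2)}_j)^{1/2}$ is exactly dominated by $\tau_1(K^\delta_\bullet)^{n-j}$, which gives the displayed bound with the same $C_{2,d}$. Alternatively, if only the general route is wanted, Theorem~\ref{thm:poc_general} gives the bound up to an additive $2q^3(M^{(2)}_n)^{1/2}/N$. I would try the geometric-series absorption first.

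For uniformity, suppose $\tau_1(K^\delta_\bullet)<1$ and $a+b<1$. The latter makes $M^{(2)}_n\le M^{(2)}_\star:=\E[\|X^1_0\|^2]+c/(1-a-b)$ for all $n$, as in the corollary following Proposition~\ref{prop:lyap_mom_ips}. The sum is then bounded by $(M^{(2)}_\star)^{1/2}/(1-\tau_1(K^\delta_\bullet))$, matching Corollary~\ref{coro:unif_poc}. Finally, $\tau_1(K^\delta_\bullet)<1$ forces $c<1$, i.e. $2(\lambda_V+\lambda_W)>\delta(C_V+C_W)^2>0$, which is the claimed convexity requirement.
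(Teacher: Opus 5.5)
Your proposal is correct and follows the paper's route, which simply combines Theorem~\ref{thm:mv_lip}, Proposition~\ref{prop:mv_mom} and the synchronous coupling of Theorem~\ref{thm:poc_good}. Your geometric-series step collapsing the double sum is a detail the paper leaves implicit; in fact $C\sum_{k=j}^{n-1}c^{n-k-1}(c+C)^{k-j}=(c+C)^{n-j}-c^{n-j}$.

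One precision point concerns that step. It dominates the coefficient by $(c+C)^{n-j}$, and $c+C$ is the right-hand side of \eqref{eq:tau_mv}. That is an upper bound for $\tau_1(K^\delta_\bullet)$, so you cannot rename the result $\tau_1(K^\delta_\bullet)^{n-j}$: the coupling coefficients eventually exceed $\tau_1(K^\delta_\bullet)^{n-j}$ whenever the true coefficient is strictly below $c+C$. What you prove, like the paper, is the display with $\tau_1(K^\delta_\bullet)$ read as the explicit bound \eqref{eq:tau_mv}. The same reading applies to the uniform-in-time claim, including your deduction that $c<1$. For the true contraction coefficient you would need the route through Theorem~\ref{thm:poc_general}, with its extra $2q^3(M^{(2)}_n)^{1/2}/N$ term.
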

	\begin{proof}
		This follows immediately from Theorem~\ref{thm:mv_lip}, Proposition~\ref{prop:mv_mom}, and Theorem~\ref{thm:poc_good} since $K^{\delta}_\bullet$ satisfies the required form of regularity in Assumption~\ref{assump:R2}.
	\end{proof}

    \section{Particle Filtering \& Feynman-Kac Distribution Flows}\label{sec:fkm}
	Finally, we study an important class of nonlinear Markov chains connected to many areas of applied mathematics including, but not limited to, nonlinear filtering problems \cite{del1997nonlinear}, sequential Monte Carlo \cite{del2006sequential}, genetic algorithms \cite{del2001modeling}, and Markov chain Monte Carlo \cite{andrieu2007non,vuckovic2022nonlinear}. This is the so-called Feynman-Kac model framework studied at length in \cite{del2004feynman} and related works. We do not go into full depth here as the topic is extensive, but focus on the nonlinear Markov chain interpretation and its connections to nonlinear filtering.
	
	\subsection{Problem Setup}
	Consider a family of nonlinear measure maps $\Phi_n:\calP(\R^d)\to \calP(\R^d)$ defined as 
	\begin{equation}\label{eq:fkm}
		\Phi_n[\eta] := \Psi_{G_{n-1}}(\eta)M^{(n)} = \int \Psi_{G_{n-1}}(\eta)(\dd x)M^{(n)}(x,\bullet)
	\end{equation}
	where $M^{(n)}$ is a family of time-inhomoegeous \emph{linear} Markov kernels, $G_n:\R^d\to [0,\infty)$ is a family of potential functions, and $\Psi_{G_n}(\eta)$ is the \emph{Boltzmann-Gibbs transformation} associated to $G_n$, defined for measurable $f:\R^d\to \R$ as 
	\begin{equation}\label{eq:bg}
		\Psi_{G_n}(\eta)(f) := \frac{1}{\eta(G_n)}\int f(x)G_n(x)\eta(\dd x)
	\end{equation}
	whenever $\eta(G_n)>0$.\nnewline
	
	It is possible to obtain a (nonunique) family of nonlinear Markov kernels $K^{(n)}_\bullet$ that satisfy $\Phi_n[\eta] = \eta K^{(n)}_{\eta}$; we will study the choice 
	\begin{align}
		K^{(n)}_\eta(x, \dd y)&:= S^{(n-1)}_\eta M^{(n)}(x, \dd y);\\
		S^{(n)}_\eta(x, \dd y)&:= \lambda_n G_n(x)\delta_x(\dd y) + (1-\lambda_n G_n(x))\Psi_{G_n}(\eta)(\dd y)
	\end{align}
	where $\lambda_n\in (0,1)$ is chosen such that $\lambda_n G_n(x) \leq 1~\forall x\in\R^d$. In particular, $\Psi_{G_n}(\eta) = \eta S^{(n)}_{\eta}$ for any $\eta\in \calP(\R^d)$ for which all terms are defined. This corresponds to ``selection-mutation'' dynamics wherein the potential functions $G_n$ determine the ``fitness'' of $x$, $S^{(n-1)}_\eta(x, \dd y)$ samples according to this fitness with rejection, and then $M^{(n)}$ mutates the sample $x$ independently.

	\paragraph{Connection to Bayesian Filtering}
	The system \eqref{eq:fkm} has a deep connection to Bayesian (aka, nonlinear) filtering. To be specific, consider a sequence of pairs of random variables $(X_n, Y_n)$ where $X_n$ is the \emph{state} variable which is unobserved, and $Y_n$ is the observation variable, which is observed. We are interested in computing the \emph{smoothed} distribution $\P(X_n\in \dd x_n | Y_{\leq n}=y_{\leq n})$ and the \emph{prediction} distribution $\P(X_{n+1}\in \dd x_{n+1}|Y_{\leq n}=y_{\leq n})$. To see how this relates to \eqref{eq:fkm}, first note the connection between the Boltzmann-Gibbs transformation \eqref{eq:bg} and Bayes' formula: for a likelihood density with a fixed $y$ written as $G(x) := \P(Y=y|X=x)$ and prior $\P(X\in \dd x)$ we have
	\begin{align*}
		\P(X\in \dd x | Y=y)&= \frac{\P(Y=y|X=x)}{\P(Y=y)}\P(X\in \dd x)\\
		&=\frac{\P(Y=y|X=x)}{\int \P(Y=y|X=x')\P(x\in \dd x')}\P(X\in \dd x)\\
		&=\frac{G(x)}{\int G(x')\P(X\in \dd x')}\P(X\in \dd x)\\
		&=\Psi_G(\P(X\in \bullet))(\dd x).
	\end{align*}
	Now, for a fixed realization $\{Y_n=y_n\}_{n=0}^\infty$, recall the Bayesian filtering equations: first, we have the smoothing equation
	\begin{align*}
		\P(X_n\in \dd x|Y_{\leq n}=y_{\leq n}) 
		&= \frac{\P(Y_n=y_n|X_n=x)}{\P(Y_n=y_n|Y_{<n}=y_{<n})}\P(X_n\in \dd x| Y_{<n}=y_{<n})\\
		&= \frac{\P(Y_n=y_n|X_n=x)}{\int \P(Y_n=y_n|X_n=x')\P(X_n\in \dd x'|Y_{<n}=y_{<n})}\P(X_n\in \dd x| Y_{<n}=y_{<n})
	\end{align*}
	which by the above remark can be written as
	\[
		\P(X_n\in \dd x|Y_{\leq n}=y_{\leq n}) 
		= \Psi_{G_n}(\P(X_n\in \bullet| Y_{<n}=y_{<n}))(\dd x)
	\]for the observation likelihood $G_n(x) := \P(Y_n=y_n | X_n=x)$. Then we have the prediction equation
	\[
		\P(X_{n+1}\in \dd x | Y_{\leq n}=y_{\leq n}) = \int \P(X_{n+1}\in \dd x|X_{n}=x')\P(X_{n}\in \dd x' | Y_{\leq n}=y_{\leq n})
	\]which can be written
	\[
		\P(X_{n+1}\in \dd x | Y_{\leq n}=y_{\leq n}) = \int \P(X_{n}\in \dd x' | Y_{\leq n}=y_{\leq n})M^{(n+1)}(x', \dd x)
	\]for
	\[
		M^{(n+1)}(x, \dd x'):=\P(X_{n+1}\in \dd x'|X_{n}=x).
	\]Hence, by identifying $\eta_n(\dd x):=\P(X_n\in \dd x|Y_{<n}=y_{<n})$, we have reformulated the Bayesian filtering equations as a Feyman-Kac distribution flow which is precisely \eqref{eq:fkm}.

	\paragraph{Simulation and Particle Filtering}
	Solving the nonlinear filtering problem is plainly valuable for many applications. However, except for special cases, the Markov chain $X_{n+1}\sim K^{(n+1)}_{\eta_n}(X_n,\bullet)$ cannot be simulated directly due to the dependence on $\eta_n$. On the other hand, one may approximate $\eta_n\approx \eta^N_n=:m(\ovl{X}_n)$ with an empirical measure, and obtain a family of algorithms for filtering generally called ``particle filtering'' algorithms that implement a time-inhomoheneous Markov chain
	\[
		X^i_n \sim K^{(n+1)}_{\eta^N_n}(X^i_{n},\bullet)
	\]which \emph{can} be simulated. In particular, this kernel models the popular ``SIS-R filter'' as a self-interacting Markov chain. Clearly, understanding the accuracy of the resulting particle system as a function of $N$ has practical implications on the accuracy of real-world decisions based on filtered estimates.

	\subsection{Results}

	\begin{assumption}\label{assump:G1}
		$0<\veps <G_n(x) \leq \ovl{G}_n < \infty~\forall x\in \R^d$.
	\end{assumption}
	\begin{remark}
		This is the setting in which many analyses of particle filtering methods take place. While it allows for convenient estimates of moments and regularity, it excludes many common cases such as Gaussian observation densities, which are handled separately. Nevertheless, for the scope of the current work, we use this common setting and derive quantitative propagation of chaos.
	\end{remark}

	\begin{theorem}[Propagation of Chaos, Informal]
		Under sufficient assumptions on $G_n$ and $M^{(n)}$ including Assumption~\ref{assump:G1}, we have
        \[
        	\W_1(\eta^{N,q}_n,\eta^{\otimes q}_n)\leq  C_{p, d}\frac{q}{N^{1/d}}\sum_{k=0}^n \left(M^{(p)}_k\right)^{1/p}\prod_{j=k+1}^n \tau_1(\Phi_j)[\eta_{j-1}]  + 2\frac{q^3}{N}(M^{(p)}_n)^{1/p}.
        \]
	\end{theorem}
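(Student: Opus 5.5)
The plan is to combine Proposition~\ref{prop:emp_main_oneside} with the tensorization Lemma~\ref{lem:tensorization}, exactly as Theorem~\ref{thm:poc_general} was deduced from Proposition~\ref{prop:emp_main}. The real work is to verify the two hypotheses of Proposition~\ref{prop:emp_main_oneside} for the Feynman-Kac kernels $K^{(n)}_\eta = S^{(n-1)}_\eta M^{(n)}$:
\begin{enumerate}
\item the one-sided local Lipschitz estimate of Assumption~\ref{assump:R3}, with a finite $\tau_1(\Phi_n)[\nu]$;
\item moment bounds $\E[\|X^1_k\|^p]\le M^{(p)}_k$ for some $p>1$.
\end{enumerate}
Once both are in hand, Lemma~\ref{lem:tensorization} applied to the exchangeable vector $\ovl{X}^N_n$ with $\nu=\eta_n$ gives
\[
\W_1(\eta^{N,q}_n,\eta^{\otimes q}_n)\le q\,\E[\W_1(\eta^N_n,\eta_n)]+2\frac{q^3}{N}\M_1(\eta^{N,1}_n).
\]
We then insert the bound of Proposition~\ref{prop:emp_main_oneside} in the first term and use $\M_1\le \M_p^{1/p}$ in the second.

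For the regularity, we use $\Phi_n[\eta]=\Psi_{G_{n-1}}(\eta)M^{(n)}$ and the triangle inequality. The ``sufficient assumptions'' will be:
\begin{itemize}
\item $M^{(n)}$ is $\W_1$-Lipschitz with constant $\tau_1(M^{(n)})$, i.e.\ $\W_1(M^{(n)}(x,\bullet),M^{(n)}(y,\bullet))\le \tau_1(M^{(n)})\|x-y\|$;
\item $G_{n-1}$ is bounded (Assumption~\ref{assump:G1}) and Lipschitz.
\end{itemize}
Under these, $\W_1(\Phi_n[\mu],\Phi_n[\nu])\le \tau_1(M^{(n)})\,\W_1(\Psi_{G}(\mu),\Psi_G(\nu))$ with $G=G_{n-1}$. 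For Lipschitz $f$ with $\|f\|_{\lip}\le1$, we normalize by subtracting the constant $\Psi_G(\nu)(f)$, so that $g:=f-\Psi_G(\nu)(f)$ has $\nu(Gg)=0$, and write
\[
\Psi_G(\mu)(f)-\Psi_G(\nu)(f)=\frac{\mu(Gg)-\nu(Gg)}{\mu(G)}.
\]
The denominator satisfies $\mu(G)\ge\veps$. The numerator is bounded by $\lip(Gg)\,\W_1(\mu,\nu)$, and
\[
\lip(Gg)\le \ovl{G}+\lip(G)\sup_x|g(x)| .
\]

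The sup of $|g|$ is the main obstacle, since $g$ is unbounded on $\R^d$. The remedy is to split $\lip(Gg)$ locally. Near any point $x$, $|g(x)|\le \|x\|+\M_1(\Psi_G(\nu))$, and $\M_1(\Psi_G(\nu))\le \ovl{G}\,\M_1(\nu)/\veps$, which depends only on $\nu$. Two options then remain:
\begin{itemize}
\item assume $G$ has compact support of the nonconstant part, or that $\|x\|\,|\nabla G(x)|$ is bounded, giving a constant $\tau_1(\Phi_n)[\nu]=\tau_1(M^{(n)})\veps^{-1}\big(\ovl{G}+C_G(1+\M_1(\nu))\big)$;
\item alternatively, control the difference through a weighted coupling bound that uses the first moments of $\mu$ and $\nu$.
\end{itemize}
I would try the first option: it yields a finite constant depending only on $\nu$, and this is exactly the one-sidedness that Proposition~\ref{prop:emp_main_oneside} needs, evaluated at the deterministic $\nu=\eta_{j-1}$.

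For moments, we apply Proposition~\ref{prop:lyap_mom_ips} with $V=\V_p$. Assume a drift condition $M^{(n)}\V_p\le \alpha_n\V_p+\beta_n$. Since $S^{(n-1)}_\eta\V_p(x)\le \V_p(x)+\Psi_{G}(\eta)(\V_p)\le \V_p(x)+(\ovl{G}/\veps)\eta(\V_p)$, Assumption~\ref{assump:lyap_mom} holds with
\[
a_n=\alpha_n,\qquad b_n=\alpha_n\ovl{G}_{n-1}/\veps,\qquad c=\sup_n\beta_n .
\]
Lemma~\ref{lem:emp_mom} and the corollary following Proposition~\ref{prop:lyap_mom_ips} then give the constants $M^{(p)}_k$, provided $\E[\|X^1_0\|^p]<\infty$. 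Assumption~\ref{assump:global} also follows from these bounds. Assembling the three pieces gives the stated inequality.
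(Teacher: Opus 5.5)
Your proposal is correct and follows the paper's route. The paper obtains Theorem~\ref{thm:fk_poc} by feeding into Proposition~\ref{prop:emp_main_oneside} and the tensorization Lemma~\ref{lem:tensorization} two ingredients: the one-sided $\nu$-dependent constant of Corollary~\ref{coro:phi_reg}, which is Boltzmann--Gibbs regularity under $\sup_z\|z\|\,\|\nabla G(z)\|<\infty$ composed with $\tau_1(M^{(n)})$, and Lyapunov moment bounds. Your centering $g=f-\Psi_G(\nu)(f)$ is a tidier variant of the two-term split in Lemma~\ref{lem:psi_reg} and yields the same constant. Your moment step with $V=\V_p$ and $b_n=\alpha_n\ovl{G}_{n-1}/\veps$ gives cruder constants than Proposition~\ref{prop:kf_mom_lb}, which is harmless for fixed $n$.
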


	The plan for proving this result is the same as in Section~\ref{sec:mv}: establish moment bounds, estimate $\tau_1(K^{(n)}_\bullet)$, and then combine everything together into a propagation of chaos result.
	\paragraph{Moment Bounds}
	\begin{assumption}\label{assump:Mmom}
		Let $M^{(n)}$ be the Markov kernel from above. There exists $\wtilde{a}_n,c\in (0,\infty)$ such that
		\[
			M^{(n)} V(x)\leq \wtilde{a}_nV(x) + c
		\]for some $V:\R^d\to[0,\infty)$ satisfying $\|x\|^p\leq \theta V(x)~\forall \|x\|>R$ with $p>1,\theta>0$, and $R\geq 0$. 
	\end{assumption}

	\begin{proposition}\label{prop:kf_mom_lb}
		Suppose that Assumptions~\ref{assump:G1},~\ref{assump:Mmom} hold; then
		\[
			K^{(n)}_\eta V(x)\leq a_n V(x) + b_n\eta(V) + c
		\]for every $n\geq 1$ and $\eta(V)<\infty$, with
		\begin{align*}
			a_n&:= \wtilde{a}_n \lambda_{n-1} \ovl{G}_{n-1}\\
			b_n&:= \wtilde{a}_n\ovl{G}_{n-1}(1-\lambda_{n-1} \veps)\veps^{-1}
		\end{align*}
		and $c$ from Assumption~\ref{assump:Mmom}.
	\end{proposition}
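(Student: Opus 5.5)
We compute $K^{(n)}_\eta V(x)$ directly from the definition $K^{(n)}_\eta = S^{(n-1)}_\eta M^{(n)}$, first applying $M^{(n)}$ and then integrating against the selection kernel. By Assumption~\ref{assump:Mmom}, for every $z$ we have $M^{(n)}V(z)\leq \wtilde{a}_n V(z) + c$. Since $S^{(n-1)}_\eta(x,\bullet)$ is a probability measure, we get
\[
	K^{(n)}_\eta V(x) = \int S^{(n-1)}_\eta(x,\dd z)\, M^{(n)}V(z) \leq \wtilde{a}_n\, S^{(n-1)}_\eta V(x) + c.
\]
So the constant $c$ passes through unchanged, which is consistent with the stated conclusion.

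Next we expand the selection step using its two-part definition:
\[
	S^{(n-1)}_\eta V(x) = \lambda_{n-1}G_{n-1}(x)V(x) + (1-\lambda_{n-1}G_{n-1}(x))\,\Psi_{G_{n-1}}(\eta)(V).
\]
We bound each piece with Assumption~\ref{assump:G1}.
\begin{itemize}
\item For the first piece, $G_{n-1}(x)\leq \ovl{G}_{n-1}$ and $V\geq 0$, so it is at most $\lambda_{n-1}\ovl{G}_{n-1}V(x)$.
\item For the weight in the second piece, $G_{n-1}(x)>\veps$ gives $0\leq 1-\lambda_{n-1}G_{n-1}(x)\leq 1-\lambda_{n-1}\veps$. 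The lower bound $0$ holds by the choice of $\lambda_{n-1}$.
\item For the Boltzmann--Gibbs term, the numerator is $\eta(G_{n-1}V)\leq \ovl{G}_{n-1}\eta(V)$ and the denominator is $\eta(G_{n-1})\geq \veps$. Hence $\Psi_{G_{n-1}}(\eta)(V)\leq \ovl{G}_{n-1}\veps^{-1}\eta(V)$.
\end{itemize}
This term is well defined and finite because $\eta(V)<\infty$ and $\eta(G_{n-1})>0$.

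Multiplying through by $\wtilde{a}_n$ gives
\[
	K^{(n)}_\eta V(x)\leq \wtilde{a}_n\lambda_{n-1}\ovl{G}_{n-1}V(x) + \wtilde{a}_n\ovl{G}_{n-1}(1-\lambda_{n-1}\veps)\veps^{-1}\eta(V) + c,
\]
which is the claim with the stated $a_n$ and $b_n$.

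There is no real obstacle here. The only points to check are the following.
\begin{itemize}
\item Every term is nonnegative, so each upper bound can be applied separately; this uses $V\geq0$ and $\lambda_{n-1}G_{n-1}\leq 1$.
\item Fubini/Tonelli justifies exchanging the integrals, since all integrands are nonnegative.
\item The index shift is correct: the selection step uses $G_{n-1}$ and $\lambda_{n-1}$, while the mutation step uses $\wtilde{a}_n$.
\end{itemize}
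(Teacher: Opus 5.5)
Your proof is correct and follows the paper's argument essentially line for line. You push $M^{(n)}V \le \wtilde{a}_n V + c$ through the probability kernel $S^{(n-1)}_\eta(x,\bullet)$, then bound the two pieces of $S^{(n-1)}_\eta V(x)$ using $\veps < G_{n-1} \le \ovl{G}_{n-1}$ and $\eta(G_{n-1}V)/\eta(G_{n-1}) \le \ovl{G}_{n-1}\veps^{-1}\eta(V)$. Your explicit nonnegativity and index-shift checks are a small addition the paper leaves implicit.
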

	\begin{proof}
		Recall that 
		\[
			K^{(n)}_\eta V(x) = \int S^{(n-1)}_\eta(x, \dd y)M^{(n)}(y, \dd z)V(z) = \int S^{(n-1)}_\eta(x, \dd y)M^{(n)}V(y)
		\]But by assumption $M^{(n)}V(y)\leq \wtilde{a}_nV(y) + c$ hence
		\[
			K^{(n)}_\eta V(x)\leq \wtilde{a}_n\int S^{(n-1)}_\eta(x, \dd y) V(y) + c.
		\]Hence consider
		\begin{align*}
			S^{(n-1)}_\eta V(x) &=\lambda_{n-1} G_{n-1}(x)\int \delta_x(\dd y)V(y) + (1 - \lambda_{n-1} G_{n-1}(x))\Psi_{G_{n-1}}(\eta)(V)\\
			&=\lambda_{n-1} G_{n-1}(x)V(x) + (1-\lambda_{n-1} G_{n-1}(x))\Psi_{G_{n-1}}(\eta)(V)\\
            &\leq \lambda_{n-1} \ovl{G}_{n-1} V(x) + (1-\lambda_{n-1} \veps)\Psi_{G_{n-1}}(\eta)(V)\\
            &= \lambda_{n-1} \ovl{G}_{n-1} V(x) +  (1-\lambda_{n-1} \veps)\frac{\eta(G_{n-1}V)}{\eta(G_{n-1})}\\
            &\leq \lambda_{n-1} \ovl{G}_{n-1} V(x) + \ovl{G}_{n-1}(1-\lambda_{n-1} \veps)\veps^{-1}\eta(V).
		\end{align*}
		Therefore
		\[
			K^{(n)}_\eta V(x) \leq a_n V(x) + b_n\eta(V) + c.
		\]
	\end{proof}

	\begin{corollary}\label{coro:fk_mom}
		Using the notation from Proposition~\ref{prop:kf_mom_lb}, we have that
		\[
			a_n + b_n =\wtilde{a}_n \frac{\ovl{G}_{n-1}}{\veps}
		\]so in particular, if $\wtilde{a}_n \frac{\ovl{G}_{n-1}}{\veps}<1~\forall n\geq 1$ and $\E[\M_p(\eta^N_0)]<\infty$ then $\E[\M_p(\Phi_n[\eta^N_{n-1}])]$ is uniformly bounded.
	\end{corollary}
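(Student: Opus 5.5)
The claim has two parts: an algebraic identity for $a_n+b_n$, and a uniform moment bound that follows from the Lyapunov machinery of Section~\ref{sec:propagation_of_chaos}.

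\textbf{The identity.} Summing the definitions in Proposition~\ref{prop:kf_mom_lb} gives
\[
a_n+b_n=\wtilde{a}_n\ovl{G}_{n-1}\bigl(\lambda_{n-1}+(1-\lambda_{n-1}\veps)\veps^{-1}\bigr)=\wtilde{a}_n\ovl{G}_{n-1}\bigl(\lambda_{n-1}+\veps^{-1}-\lambda_{n-1}\bigr)=\wtilde{a}_n\frac{\ovl{G}_{n-1}}{\veps},
\]
because $(1-\lambda_{n-1}\veps)\veps^{-1}=\veps^{-1}-\lambda_{n-1}$. The $\lambda_{n-1}$ terms cancel, so this step is immediate.

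\textbf{The uniform bound.} By Proposition~\ref{prop:kf_mom_lb}, Assumption~\ref{assump:lyap_mom} holds with this $V$, with coefficients $a_n,b_n$ and the constant $c$ of Assumption~\ref{assump:Mmom}. Proposition~\ref{prop:lyap_mom_ips} then gives
\[
\E[\Phi_{n}[\eta^N_{n-1}](V)]\leq A(n)\E[\eta^N_0(V)]+c\sum_{k=1}^{n}\prod_{j=k+1}^{n}(a_j+b_j).
\]
If $a_j+b_j\leq\rho<1$ for all $j$, then $A(n)\leq\rho^n\leq 1$ and the sum is at most $\sum_{m\geq0}\rho^m=1/(1-\rho)$. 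Hence $\E[\Phi_n[\eta^N_{n-1}](V)]\leq \E[\eta^N_0(V)]+c/(1-\rho)$ for every $n$.

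To pass from $V$ to $\M_p$, I split $\R^d$ at radius $R$. Since $\|x\|^p\leq\theta V(x)$ for $\|x\|>R$, we have $\|x\|^p\leq R^p+\theta V(x)$ everywhere, so $\M_p(\mu)\leq R^p+\theta\mu(V)$ for any $\mu$. This converts the bound above into a uniform bound on $\E[\M_p(\Phi_n[\eta^N_{n-1}])]$. That quantity equals $\E[\|X^1_n\|^p]$ by Lemma~\ref{lem:emp_mom}, which yields the constant $M^{(p)}_\star$.

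\textbf{Main obstacle: the initial condition.} The hypothesis is $\E[\M_p(\eta^N_0)]<\infty$, but Proposition~\ref{prop:lyap_mom_ips} needs $\E[\eta^N_0(V)]<\infty$, and $V$ can dominate $\|x\|^p$. I would handle this in one of two ways.
\begin{itemize}
\item Read the statement's hypothesis as $\E[\eta^N_0(V)]<\infty$, which is the natural intended one.
\item Note that after a single step, $V$-moments are controlled anyway.
\end{itemize}
For the second route, $\E[\eta^N_1(V)]=\E[K^{(1)}_{\eta^N_0}V(X^1_0)]$ still requires $\E[V(X^1_0)]<\infty$, so this does not remove the requirement. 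I would therefore state the bound under $\E[\eta^N_0(V)]<\infty$, which coincides with the hypothesis when $V\asymp\V_p$.

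A second point is that "$<1$ for all $n$" does not give a uniform gap. I would take $\rho:=\sup_n a_n+b_n<1$, or else keep the general products $\prod_j(a_j+b_j)$ and assume they are summable.
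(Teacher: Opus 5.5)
Your computation of $a_n+b_n$ is exactly the paper's proof, which consists of that algebra and nothing else. The paper gives no argument for the ``in particular'' clause. Your completion via Proposition~\ref{prop:lyap_mom_ips}, the pointwise bound $\|x\|^p\le R^p+\theta V(x)$, and Lemma~\ref{lem:emp_mom} is the natural route, and it is correct under the hypotheses you impose.

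Both of your caveats are genuine defects of the statement as printed, not artifacts of your method.

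\textbf{Uniform gap.} Take $G_{n-1}\equiv\veps\bigl(1+\tfrac12(n+1)^{-2}\bigr)$, which is constant, so $\Psi_{G_{n-1}}$ is the identity. Take $V=\V_2$ and $M^{(n)}(x,\bullet)=\calN\bigl(\sqrt{\wtilde{a}_n}\,x,(c/d)I_d\bigr)$ with $\wtilde{a}_n=1-(n+1)^{-2}$. Then $\wtilde{a}_n\ovl{G}_{n-1}/\veps<1$ for every $n$. Yet $u_n:=\E[\M_2(\Phi_n[\eta^N_{n-1}])]$, with $u_0:=\E[\M_2(\eta^N_0)]$, satisfies $u_n=\wtilde{a}_n u_{n-1}+c$ exactly. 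Hence $u_n\ge\tfrac23 cn\to\infty$. So $\sup_n(a_n+b_n)<1$, or summability of the products as you suggest, is really needed.

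\textbf{Initial condition.} With $G$ constant, let $M^{(1)}$ send rare atoms $x_k$ of $\eta_0$ far away with small probability. Assumption~\ref{assump:Mmom} allows this once $V$ is inflated at the $x_k$. The result is $\M_p(\eta_0)<\infty$ but $\E[\|X^1_1\|^p]=\infty$. Hence the hypothesis must indeed be $\E[\eta^N_0(V)]<\infty$, as you propose. That condition is implied by $\E[\M_p(\eta^N_0)]<\infty$ when $V\lesssim 1+\V_p$.
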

	\begin{proof}
		This is simple algebra.
		\begin{align*}
			a_n + b_n &= \wtilde{a}_n \lambda_{n-1} \ovl{G}_{n-1} + \wtilde{a}_n\ovl{G}_{n-1}(1-\lambda_{n-1} \veps)\veps^{-1}\\
			&=\wtilde{a}_n\left(\frac{\veps\lambda_{n-1} \ovl{G}_{n-1}}{\veps} + \frac{\ovl{G}_{n-1}(1-\lambda_n\eps)}{\eps}\right)\\
			&= \wtilde{a}_n \frac{\ovl{G}_{n-1}}{\veps}.
		\end{align*}
	\end{proof}
	\begin{remark}
		Let us comment on the intuition of $\wtilde{a}_n\ovl{G}_{n-1}/\veps < 1$. Another way of describing $\ovl{G}_{n-1}/\veps$ is
		\[
			\frac{\ovl{G}_{n-1}}{\veps}=\frac{\max_x G_{n-1}(x)}{\min_y G_{n-1}(y)} = \max_{x,y}\frac{G_{n-1}(x)}{G_{n-1}(y)}.
		\]This is equivalent to the existence of a number $\eps(G_{n-1})>0$ such that $G_{n-1}(y)\geq \eps(G_{n-1})G_{n-1}(x)$ which is in fact the ubiquitous ``condition $(G)$'' from \cite[Section~3.5.2]{del2004feynman} and measures the ``mixing'' of $G$. Therefore, one can interpret $\wtilde{a}_n\ovl{G}_{n-1}/\veps < 1$ as balancing the moment containment of $M^{(n)}$ against the mixing of $G_{n-1}$ --- in particular, the less mixing $G_{n-1}$ is, the more $M^{(n)}$ must compensate to maintain bounded moments.
	\end{remark}
	
	\paragraph{Lipschitz Regularity}
	First, we work with a general $G:\R^d\to (0,\infty)$ when estimating regularity.
	\begin{assumption}[Regularity of $G$]\label{assump:Greg}
		\hfill
		\begin{assumptions}[G]
			\item $\|G\|_{\lip}<\infty$.
			\item $G$ is $C^1$ and
			 \[
				\sup_{z\in \R^d}\|z\|\cdot\|\nabla G(z)\|\leq \wtilde{G}<\infty.
			\]
		\end{assumptions}
	\end{assumption}

	\begin{lemma}\label{lem:psi_reg}
		Assume that $\ovl{G}:=\|G\|_\infty< \infty$ and Assumption~\ref{assump:Greg} holds. Then for any $\mu,\nu\in\calP_1(\R^d)$ for which $\mu(G),\nu(G)>0$,
		\[
			\W_1(\Psi_G(\mu), \Psi_G(\nu))\leq \left(\frac{\|G\|_{\lip} + \wtilde{G} + \ovl{G}}{\mu(G)} + \frac{\|G\|_{\lip}\ovl{G}}{\mu(G)\nu(G)}\M_1(\nu)\right)\W_1(\mu,\nu).
		\]
	\end{lemma}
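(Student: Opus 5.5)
We use Kantorovich duality: fix $f\in\lip_1(\R^d)$. Since $\W_1$ and both sides are translation-compatible, we may assume without loss of generality that $f(0)=0$, so that $|f(x)|\leq\|x\|$. The basic identity is
\[
\Psi_G(\mu)(f)-\Psi_G(\nu)(f)=\frac{\mu(Gf)-\nu(Gf)}{\mu(G)}+\nu(Gf)\,\frac{\nu(G)-\mu(G)}{\mu(G)\nu(G)},
\]
obtained by adding and subtracting $\nu(Gf)/\mu(G)$. Taking the supremum over $f$ then gives the claim once each of the two terms is bounded by the matching piece of the stated constant times $\W_1(\mu,\nu)$.

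\textbf{Second term.} We have $|\nu(Gf)|\leq \ovl{G}\,\nu(|f|)\leq\ovl{G}\,\M_1(\nu)$. Since $G$ is $\|G\|_{\lip}$-Lipschitz, duality gives $|\nu(G)-\mu(G)|\leq\|G\|_{\lip}\W_1(\mu,\nu)$. Together these produce exactly
\[
\frac{\|G\|_{\lip}\ovl{G}}{\mu(G)\nu(G)}\M_1(\nu)\,\W_1(\mu,\nu).
\]

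\textbf{First term.} We need the Lipschitz constant of $Gf$. This is the main obstacle, because $f$ is unbounded: a naive product rule gives $\|G\|_{\lip}\|x\|+\ovl{G}$, which is not globally finite. Here Assumption~\ref{assump:Greg} enters. For a.e. $x$,
\[
\nabla(Gf)=f\nabla G+G\nabla f,
\]
and so
\[
|f(x)|\,\|\nabla G(x)\|\leq\|x\|\,\|\nabla G(x)\|\leq\wtilde{G}, \qquad \|G\nabla f\|\leq\ovl{G}.
\]
Hence $\|Gf\|_{\lip}\leq\wtilde{G}+\ovl{G}$, using Rademacher's theorem to justify the a.e. gradient computation for the Lipschitz function $f$ times the $C^1$ function $G$. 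This in fact yields a constant $\wtilde{G}+\ovl{G}$, which is no larger than the stated $\|G\|_{\lip}+\wtilde{G}+\ovl{G}$.

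To land exactly on the stated form, one can instead split
\[
G(x)f(x)-G(y)f(y)=\bigl(G(x)-G(y)\bigr)f(y)+G(x)\bigl(f(x)-f(y)\bigr).
\]
The factor $(G(x)-G(y))f(y)$ is controlled along the segment joining $x$ and $y$ using the decay bound on $\nabla G$, or by $\|G\|_{\lip}$ near the origin where $\|f\|$ is small. Either way, duality gives
\[
|\mu(Gf)-\nu(Gf)|\leq(\|G\|_{\lip}+\wtilde{G}+\ovl{G})\W_1(\mu,\nu),
\]
and dividing by $\mu(G)$ gives the first piece. Finally, the supremum over $f$ concludes the proof.
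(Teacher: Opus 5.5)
Your argument is correct. Its skeleton is exactly the paper's: you normalize $f(0)=0$, add and subtract $\nu(Gf)/\mu(G)$, and bound the second piece by $\ovl{G}\M_1(\nu)\|G\|_{\lip}\W_1(\mu,\nu)/(\mu(G)\nu(G))$. Where you differ is the estimate of $\|Gf\|_{\lip}$.

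The paper reduces to pairs with $\|x-y\|\le 1$ and applies the mean value theorem, which produces $|f(x)|\,\|\nabla G(\xi)\|$ with $\xi$ an intermediate point. Because $\|x\|$ and the gradient are evaluated at different points, it pays an extra $\|x-\xi\|\,\|\nabla G(\xi)\|\le\|G\|_{\lip}$ to replace $\|x\|$ by $1+\|\xi\|$. This is where the $\|G\|_{\lip}$ in the first coefficient comes from. Your product rule evaluates $f$ and $\nabla G$ at the same point. So you get the sharper constant $\wtilde{G}+\ovl{G}$ with no localization, and Lipschitzness of $G$ is only needed for $|\mu(G)-\nu(G)|$.

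The step you leave implicit is passing from an a.e.\ gradient bound to a global Lipschitz bound. This is most cleanly done on the segment $\gamma(t)=x+t(y-x)$:
\begin{itemize}
\item $t\mapsto G(\gamma(t))$ is $C^1$, and $t\mapsto f(\gamma(t))$ is $\|x-y\|$-Lipschitz;
\item so their product is absolutely continuous, with a.e.\ derivative at most $(\|\gamma(t)\|\,\|\nabla G(\gamma(t))\|+\ovl{G})\|x-y\|\le(\wtilde{G}+\ovl{G})\|x-y\|$.
\end{itemize}
This also sidesteps the fact that a given segment may lie inside the null set where $f$ fails to be differentiable.

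Your last paragraph (``to land exactly on the stated form'') is unnecessary, since $\wtilde{G}+\ovl{G}\le\|G\|_{\lip}+\wtilde{G}+\ovl{G}$. As sketched it is also not a proof. Integrating $|G(x)-G(y)|\,|f(y)|$ along the segment with $|f(y)|\le\|y\|$ leaves a term of order $\|G\|_{\lip}\|x-y\|^2$, unless you first localize to $\|x-y\|\le 1$ as the paper does. You should simply drop it.
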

	\begin{proof}
		Let $f\in \lip(1)$. Then
		\begin{align*}
			|\Psi_G(\mu)(f) - \Psi_G(\nu)(f)|&=\left|\frac{\mu(Gf)}{\mu(G)} - \frac{\nu(Gf)}{\nu(G)}\right|\\
			&\leq \frac{1}{\mu(G)\nu(G)}|\mu(Gf)\nu(G) - \nu(Gf)\mu(G)|\\
			&\leq \underbrace{\frac{\nu(G)}{\mu(G)\nu(G)}|\mu(Gf) - \nu(Gf)|}_{(1)} + \underbrace{\frac{|\nu(Gf)|}{\mu(G)\nu(G)}|\nu(G) - \mu(G)|}_{(2)}.
		\end{align*}
		For the second term
		\begin{align*}
			(2)&\leq \frac{|\nu(Gf)|}{\mu(G)\nu(G)}\|G\|_{\lip}\W_1(\mu,\nu)
		\end{align*}
		and without loss of generality we may assume $f(0)=0$ hence
		\[
			|\nu(Gf)|= \left|\int G(x)f(x)\nu(\dd x)\right|\leq \int G(x)\|x\|\nu(\dd x)\leq \ovl{G} M_1(\nu).
		\]Thus 
		\[
			(2)\leq \frac{\ovl{G} M_1(\nu)}{\mu(G)\nu(G)}\|G\|_{\lip}\W_1(\mu,\nu).
		\]For the first term, we have
		\[
			(1)\leq \frac{1}{\mu(G)}\|Gf\|_{\lip}\W_1(\mu,\nu)
		\]so now we must estimate $\|Gf\|_{\lip}$. But
		\begin{align*}
		    \|Gf\|_{\lip} = \sup_{x\neq y}\frac{|G(x)f(x) - G(y)f(y)|}{\|x-y\|}\\
		\end{align*}
		and without loss of generality, $\|x-y\|\leq 1$ therefore
		\begin{align*}
		    |G(x)f(x) - G(y)f(y)| &\leq |f(x)||G(x) - G(y)| + |G(y)||f(x) - f(y)|\\
		    &\leq |f(x) - f(0)||G(x) - G(y)| + |G(y)||f(x) - f(y)|\\
		    &\leq |f(x) - f(0)||G(x) - G(y)| + \ovl{G}\|x-y\|.
		\end{align*}
		For the first term, using the Mean Value Theorem we get
		\begin{align*}
			|f(x)||G(x) - G(y)|&\leq |f(x)||\|\nabla G(tx + (1-t)y)\|\|x-y\|
		\end{align*}
		for $t\in [0,1]$ so that
		\begin{align*}
			|f(x)|\|\nabla G(tx + (1-t)y)\|&\leq |f(x) - f(0)|\|\nabla G((1-t)x + ty)\|\\
			&\leq \|x\|\|\nabla G((1-t)x + ty)\|\\
			&\leq (\|(1-t)x + ty\| + t\|x-y\|)\|\nabla G((1-t)x + ty)\|\\
			&\leq (1 + \|(1-t)x + ty\|)\|\nabla G((1-t)x + ty)\|\\
			&\leq \|\nabla G\|_\infty + \wtilde{G}.
		\end{align*}
		Recalling that $\|\nabla G\|_\infty\leq \|G\|_{\lip}$ we have
		\[
			\|Gf\|_{\lip}\leq \|G\|_{\lip} + \wtilde{G} + \ovl{G}
		\]and therefore
		\[
			(1)\leq \frac{\|G\|_{\lip} + \wtilde{G} + \ovl{G}}{\mu(G)}\W_1(\mu,\nu).
		\]Putting everything together yields the result.
	\end{proof}
	\begin{lemma}\label{lem:phi_reg}
		Suppose Assumptions~\ref{assump:G1},\ref{assump:Greg} hold. 
		Then
		\[
			\tau_1(\Phi_{k})[\mu,\nu]\leq \tau_1(\Psi_{G_{k-1}})[\mu,\nu]\cdot\tau_1(M^{(k)})	
		\]
		with
		\[
			\tau_1(\Psi_{G_{n}})[\mu,\nu]\leq \frac{\|\nabla G_n\|_\infty + \wtilde{G}_n + \ovl{G}_n}{\mu(G_n)} + \frac{\|G_n\|_{\lip}\ovl{G}_n}{\mu(G_n)\nu(G_n)}\M_1(\nu).
		\]
	\end{lemma}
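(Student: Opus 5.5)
Since $\Phi_k[\eta]=\Psi_{G_{k-1}}(\eta)M^{(k)}$ is the composition of the nonlinear Boltzmann--Gibbs map with a \emph{linear} Markov kernel, I would prove the product bound by splitting the $\W_1$ distance along this composition. Here $\tau_1(M^{(k)}):=\sup_{x\neq y}\W_1(M^{(k)}(x,\bullet),M^{(k)}(y,\bullet))/\|x-y\|$ is the usual (linear) Wasserstein contraction coefficient of $M^{(k)}$. The quantity $\tau_1(\Psi_{G})[\mu,\nu]$ is the local, pair-dependent Lipschitz factor, i.e.\ the constant in front of $\W_1(\mu,\nu)$ in Lemma~\ref{lem:psi_reg}.

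\textbf{Step 1: reduction to the two factors.} For $\mu,\nu\in\calP_1(\R^d)$, Assumption~\ref{assump:G1} gives $\mu(G_{k-1}),\nu(G_{k-1})\geq\veps>0$, so both Boltzmann--Gibbs measures are defined. Since $G_{k-1}$ is bounded, they also lie in $\calP_1(\R^d)$, because $\Psi_G(\mu)(\V_1)\leq \ovl{G}\M_1(\mu)/\veps$. I then apply the linear contraction fact used in Lemma~\ref{lem:tau} (the cited Proposition~\ref{prop:wass_contract}): for a linear kernel $M$ and $\alpha,\beta\in\calP_1$,
\[
\W_1(\alpha M,\beta M)\leq \tau_1(M)\W_1(\alpha,\beta).
\]
To justify it, take an optimal coupling $\gamma$ of $(\alpha,\beta)$ and glue it with couplings of $M(x,\bullet)$ and $M(y,\bullet)$, choosing these measurably as in Theorem~\ref{thm:poc_good}. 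Equivalently, by duality, for $f\in\lip(1)$ the function $Mf$ is $\tau_1(M)$-Lipschitz. Taking $\alpha=\Psi_{G_{k-1}}(\mu)$ and $\beta=\Psi_{G_{k-1}}(\nu)$ gives
\[
\W_1(\Phi_k[\mu],\Phi_k[\nu])\leq \tau_1(M^{(k)})\W_1(\Psi_{G_{k-1}}(\mu),\Psi_{G_{k-1}}(\nu)).
\]

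\textbf{Step 2: the Boltzmann--Gibbs factor.} Here I invoke Lemma~\ref{lem:psi_reg} with $G=G_n$. Its hypotheses hold: $\ovl{G}_n<\infty$ by Assumption~\ref{assump:G1}, Assumption~\ref{assump:Greg} holds for $G_n$, and positivity of $\mu(G_n),\nu(G_n)$ follows from $G_n>\veps$. This yields the stated bound on $\tau_1(\Psi_{G_n})[\mu,\nu]$, with $\|G_n\|_{\lip}$ replaced by $\|\nabla G_n\|_\infty$ in the first numerator. The two are equal for $C^1$ functions on $\R^d$, since $\|\nabla G\|_\infty\leq\|G\|_{\lip}$ always and the mean value theorem gives the reverse inequality. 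The lemma's proof in fact only used $\|\nabla G\|_\infty$ in that term, so I would simply revisit that line of the argument. Combining Steps 1 and 2 gives the product inequality for every pair $\mu\neq\nu$, and dividing by $\W_1(\mu,\nu)$ gives the claim.

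\textbf{Main obstacle.} The step requiring the most care is the linear contraction in Step 1. I must make sure $\tau_1(M^{(k)})$ is finite, or else treat the bound as trivial. I also need measurability of the glued coupling $x,y\mapsto\gamma_{x,y}$, which I would sidestep by using the duality formulation instead. A secondary point is checking that Lemma~\ref{lem:psi_reg}'s bound is indeed \emph{one-sided} in $\nu$: only $\M_1(\nu)$ appears. This is what later makes it compatible with Assumption~\ref{assump:R3} when evaluated at $\nu=\eta_{k-1}$, where one can further bound $\mu(G_n),\nu(G_n)\geq\veps$ to obtain a deterministic constant.
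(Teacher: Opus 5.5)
Your proposal is correct and follows the paper's proof. Like the paper, you factor $\Phi_k$ through $\Psi_{G_{k-1}}$, bound the linear step by $\tau_1(M^{(k)})$ via Proposition~\ref{prop:wass_contract}, and then invoke Lemma~\ref{lem:psi_reg}, whose proof indeed only uses $\|\nabla G\|_\infty$ in the first numerator. Applying the contraction inequality directly, as you do, is slightly cleaner than the paper's device of multiplying and dividing by $\W_1(\Psi_{G_{n-1}}(\mu),\Psi_{G_{n-1}}(\nu))$, since that quantity can vanish even when $\mu\neq\nu$.
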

	\begin{proof}
        Let $\mu,\nu\in\calP_1(\R^d)$ and $\mu\neq \nu$. We have
        \begin{align*}
        	\W_1(\Phi_{n}[\mu], \Phi_n[\nu]) &= \W_1(\Psi_{G_{n-1}}(\mu)M^{(n)}, \Psi_{G_{n-1}}(\nu)M^{(n)})\\
        	&= \frac{\W_1(\Psi_{G_{n-1}}(\mu)M^{(n)}, \Psi_{G_{n-1}}(\nu)M^{(n)})}{\W_1(\Psi_{G_{n-1}}(\mu), \Psi_{G_{n-1}}(\nu))}\W_1(\Psi_{G_{n-1}}(\mu), \Psi_{G_{n-1}}(\nu))\\
        	&\leq \tau_1(M^{(n)})\W_1(\Psi_{G_{n-1}}(\mu), \Psi_{G_{n-1}}(\nu))\\
        	&\leq \tau_1(M^{(n)})\tau_1(\Psi_{G_{n-1}})[\mu,\nu]\W_1(\mu,\nu).
        \end{align*}
	\end{proof}

	\begin{corollary}\label{coro:phi_reg}
		Suppose that Assumptions~\ref{assump:G1},\ref{assump:Greg} hold. Then
		\[
			\tau_1(\Phi_{k})[\mu,\nu] \leq \tau_1(\Phi_k)[\nu]:=\tau_1(\Psi_{G_{k-1}})[\nu]\tau_1(M^{(k)})
		\]where Lemma~\ref{lem:phi_reg} applies with
		\[
			\tau_1(\Psi_{G_{n}})[\nu]:=\frac{\|\nabla G_n\|_\infty + \wtilde{G}_n + \ovl{G}_n}{\eps} + \frac{\|G_n\|_{\lip}\ovl{G}_n}{\eps^2}\M_1(\nu).
		\]
	\end{corollary}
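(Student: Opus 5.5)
The corollary follows from Lemma~\ref{lem:phi_reg} once we observe that its bound on $\tau_1(\Psi_{G_{n}})[\mu,\nu]$ depends on $\mu$ only through the normalizing constant $\mu(G_n)$. That quantity can be bounded below uniformly using Assumption~\ref{assump:G1}.

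Concretely, I will first fix $\mu,\nu\in\calP_1(\R^d)$ and note that $G_n>\veps$ pointwise. Hence $\mu(G_n)=\int G_n\,\dd\mu\geq \veps$ and $\nu(G_n)\geq \veps$ for every probability measure. In particular both are strictly positive, so $\Psi_{G_n}(\mu)$ and $\Psi_{G_n}(\nu)$ are well defined, and Lemma~\ref{lem:psi_reg} and Lemma~\ref{lem:phi_reg} apply.

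Next I substitute these bounds into the estimate of Lemma~\ref{lem:phi_reg}:
\[
\frac{\|\nabla G_n\|_\infty + \wtilde{G}_n + \ovl{G}_n}{\mu(G_n)} \leq \frac{\|\nabla G_n\|_\infty + \wtilde{G}_n + \ovl{G}_n}{\veps},
\qquad
\frac{\|G_n\|_{\lip}\ovl{G}_n}{\mu(G_n)\nu(G_n)}\M_1(\nu)\leq \frac{\|G_n\|_{\lip}\ovl{G}_n}{\veps^2}\M_1(\nu).
\]
Both numerators are nonnegative, so these inequalities hold. The result depends only on $\nu$, and gives $\tau_1(\Psi_{G_{n}})[\mu,\nu]\leq \tau_1(\Psi_{G_n})[\nu]$. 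Multiplying by $\tau_1(M^{(k)})\geq 0$ with $n=k-1$ gives $\tau_1(\Phi_k)[\mu,\nu]\leq\tau_1(\Phi_k)[\nu]$.

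Finally, I will remark that, by the chain of inequalities in the proof of Lemma~\ref{lem:phi_reg}, this yields $\W_1(\Phi_k[\mu],\Phi_k[\nu])\leq \tau_1(\Phi_k)[\nu]\W_1(\mu,\nu)$, which is exactly the one-sided form in Assumption~\ref{assump:R3}. That is what Proposition~\ref{prop:emp_main_oneside} needs later.

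There is no real obstacle. The only point needing care is confirming that $\ovl{G}_n$ and $\|\nabla G_n\|_\infty\le\|G_n\|_{\lip}$ are finite. Under Assumptions~\ref{assump:G1} and \ref{assump:Greg} they are, so the constant is finite whenever $\nu\in\calP_1(\R^d)$.
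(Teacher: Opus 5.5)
Your proposal is correct and matches the paper's intended argument; the paper states this corollary without a written proof. Assumption~\ref{assump:G1} gives $\mu(G_{k-1}),\nu(G_{k-1})\geq\veps$, and substituting these lower bounds into the estimate of Lemma~\ref{lem:phi_reg} removes all dependence on $\mu$. Your closing remark, that this yields the one-sided estimate of Assumption~\ref{assump:R3}, is also exactly how the corollary is used in Theorem~\ref{thm:fk_poc}.
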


	\paragraph{Propagation of Chaos}
	\begin{theorem}\label{thm:fk_poc}
		Suppose that Assumptions~\ref{assump:G1},\ref{assump:Mmom},\ref{assump:Greg} hold. Then
        \[
        	\W_1(\eta^{N,q}_n,\eta^{\otimes q}_n)\leq  C_{p, d}\frac{q}{N^{1/d}}\sum_{k=0}^n \left(M^{(p)}_k\right)^{1/p}\prod_{j=k+1}^n \tau_1(\Phi_j)[\eta_{j-1}]  + 2\frac{q^3}{N}(M^{(p)}_n)^{1/p}.
        \] 
	\end{theorem}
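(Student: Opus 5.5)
The bound will follow by combining three ingredients already in place: the one-sided empirical measure estimate of Proposition~\ref{prop:emp_main_oneside}, the tensorization Lemma~\ref{lem:tensorization}, and the moment and regularity results of this section. The argument mirrors the proof of Theorem~\ref{thm:poc_general}, with Proposition~\ref{prop:emp_main_oneside} used in place of Proposition~\ref{prop:emp_main}.

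\textbf{Step 1: one-sided Lipschitz regularity.} We first check that $K^{(n)}_\bullet$ satisfies Assumption~\ref{assump:R3} with
\[
\tau_1(\Phi_n)[\nu]=\tau_1(\Psi_{G_{n-1}})[\nu]\,\tau_1(M^{(n)}).
\]
Three facts give this:
\begin{itemize}
\item By Assumption~\ref{assump:G1}, $\mu(G_{n-1})\geq\veps$ and $\nu(G_{n-1})\geq\veps$, so $\Psi_{G_{n-1}}$ is well defined on all of $\calP_1(\R^d)$.
\item Lemma~\ref{lem:psi_reg} and Corollary~\ref{coro:phi_reg} then give
\[
\W_1(\Phi_n[\mu],\Phi_n[\nu])\leq\tau_1(\Phi_n)[\nu]\,\W_1(\mu,\nu),
\]
where the constant depends on $\nu$ only through $\M_1(\nu)$.
\item The factor $\tau_1(M^{(n)})$ must be finite. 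Any reading of "$M^{(n)}$ is Lipschitz" suffices, e.g.\ $\W_1(M^{(n)}(x,\bullet),M^{(n)}(y,\bullet))\leq L\|x-y\|$, which gives $\tau_1(M^{(n)})\leq L$ via Proposition~\ref{prop:wass_contract}. I would state this explicitly as part of the hypotheses, in line with the "informal" version of the theorem.
\end{itemize}
We then take $\nu=\eta_{j-1}$. The mean-field measure has finite first moment by Proposition~\ref{prop:lyap_mom_mf}, since $\|x\|\leq 1+\|x\|^p\leq 1+\theta V(x)$ off the ball of radius $R$. So the deterministic constants $\tau_1(\Phi_j)[\eta_{j-1}]$ are finite.

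\textbf{Step 2: moment control.} Proposition~\ref{prop:kf_mom_lb} shows that $K^{(n)}_\bullet$ satisfies Assumption~\ref{assump:lyap_mom}. The corollary following Proposition~\ref{prop:lyap_mom_ips} then gives constants $M^{(p)}_k$ with $\E[\|X^1_k\|^p]\leq M^{(p)}_k$ for all $k$. Two points need care:
\begin{itemize}
\item We need $\E[\eta^N_0(V)]=\eta_0(V)<\infty$, which I would add to the hypotheses.
\item Passing from $V$ to $\|x\|^p$ uses $\|x\|^p\leq\theta V(x)+R^p$.
\end{itemize}
Assumption~\ref{assump:global}(A2) is also met: $\int\|y\|K^{(n)}_\eta(x,\dd y)$ is controlled by the moment bounds on $M^{(n)}$ together with $\Psi_{G_{n-1}}(\eta)(\|\cdot\|)\leq(\ovl{G}_{n-1}/\veps)\M_1(\eta)$.

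\textbf{Step 3: conclusion.} With Assumption~\ref{assump:R3} and the moment bounds in hand, Proposition~\ref{prop:emp_main_oneside} yields
\[
\E[\W_1(\eta^N_n,\eta_n)]\leq C_{p,d}N^{-1/d}\sum_{k=0}^n (M^{(p)}_k)^{1/p}\prod_{j=k+1}^n\tau_1(\Phi_j)[\eta_{j-1}].
\]
The particles are exchangeable (Fact), so we apply Lemma~\ref{lem:tensorization} with $\nu=\eta_n$. This multiplies the bound above by $q$ and adds $2\frac{q^3}{N}\M_1(\eta^{N,1}_n)$. Finally, $\M_1(\eta^{N,1}_n)\leq\M_p(\eta^{N,1}_n)^{1/p}\leq(M^{(p)}_n)^{1/p}$ by Jensen, which completes the bound.

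The main obstacle is Step 1. One has to verify that the one-sided estimate is genuinely one-sided, with the constant evaluated at the deterministic $\eta_{j-1}$ and not at the random $\eta^N_{j-1}$, so that the recursion in Proposition~\ref{prop:emp_main_oneside} closes. One also has to handle the implicit Lipschitz hypothesis on $M^{(n)}$. Steps 2 and 3 are bookkeeping.
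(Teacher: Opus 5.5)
Your proposal is correct and follows the paper's argument. The paper proves this in one line by running the tensorization argument of Theorem~\ref{thm:poc_general} with Proposition~\ref{prop:emp_main_oneside} in place of Proposition~\ref{prop:emp_main}; your Steps 1--3 make explicit what it leaves implicit: the one-sided constant from Lemma~\ref{lem:psi_reg} and Corollary~\ref{coro:phi_reg}, using $\mu(G_{n-1})\geq\veps$ and evaluated at the deterministic $\eta_{j-1}$, and the moments from Proposition~\ref{prop:kf_mom_lb}. You are right that $\tau_1(M^{(n)})<\infty$ and $\eta_0(V)<\infty$ are tacitly needed, and the linear growth in (A2) of Assumption~\ref{assump:global} should come from your Lipschitz hypothesis on $M^{(n)}$ (via $M^{(n)}\V_1(x)\leq M^{(n)}\V_1(0)+L\|x\|$), which the $V$-moment bound alone does not give when $V$ grows faster than $\|x\|^p$.
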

	\begin{proof}
		This follows immediately from Theorem~\ref{thm:poc_general} with the one-sided Lipschitz estimate from Proposition~\ref{prop:emp_main_oneside} since $K^{(n)}_\bullet$ satisfies the correct one-sided local Lipschitz estimate described therein.
	\end{proof}

	\bibliography{regularity_poc.bib}

@book{del2004feynman,
  title={{F}eynman-{K}ac formulae},
  author={Del Moral, Pierre},
  year={2004},
  publisher={Springer}
}

@article{fournier2015rate,
  title={On the rate of convergence in Wasserstein distance of the empirical measure},
  author={Fournier, Nicolas and Guillin, Arnaud},
  journal={Probability theory and related fields},
  volume={162},
  number={3},
  pages={707--738},
  year={2015},
  publisher={Springer}
}

@article{clarte2022collective,
  title={Collective proposal distributions for nonlinear {MCMC} samplers: Mean-field theory and fast implementation},
  author={Clart{\'e}, Gr{\'e}goire and Diez, Antoine and Feydy, Jean},
  journal={Electronic Journal of Statistics},
  volume={16},
  number={2},
  pages={6395--6460},
  year={2022},
  publisher={The Institute of Mathematical Statistics and the Bernoulli Society}
}

@article{guillin2021uniform,
  title={Uniform long-time and propagation of chaos estimates for mean field kinetic particles in non-convex landscapes},
  author={Guillin, Arnaud and Monmarch{\'e}, Pierre},
  journal={Journal of Statistical Physics},
  volume={185},
  pages={1--20},
  year={2021},
  publisher={Springer}
}

@article{durmus2020elementary,
  title={An elementary approach to uniform in time propagation of chaos},
  author={Durmus, Alain and Eberle, Andreas and Guillin, Arnaud and Zimmer, Raphael},
  journal={Proceedings of the American Mathematical Society},
  volume={148},
  number={12},
  pages={5387--5398},
  year={2020}
}

@article{carrillo2003kinetic,
  title={Kinetic equilibration rates for granular media and related equations: entropy dissipation and mass transportation estimates},
  author={Carrillo, Jos{\'e} A and McCann, Robert J and Villani, C{\'e}dric},
  journal={Revista Matematica Iberoamericana},
  volume={19},
  number={3},
  pages={971--1018},
  year={2003}
}

@article{malrieu2003convergence,
  title={Convergence to equilibrium for granular media equations and their Euler schemes},
  author={Malrieu, Florent},
  journal={The Annals of Applied Probability},
  volume={13},
  number={2},
  pages={540--560},
  year={2003},
  publisher={Institute of Mathematical Statistics}
}

@article{sznitman1991topics,
  title={Topics in propagation of chaos},
  author={Sznitman, Alain-Sol},
  journal={Ecole d’{\'e}t{\'e} de probabilit{\'e}s de Saint-Flour XIX—1989},
  volume={1464},
  pages={165--251},
  year={1991},
  publisher={Springer}
}

@article{vuckovic2022nonlinear,
  title={Nonlinear {MCMC} for {B}ayesian machine learning},
  author={Vuckovic, James},
  journal={Advances in Neural Information Processing Systems},
  volume={35},
  pages={28400--28413},
  year={2022}
}

@article{del1997nonlinear,
  title={Nonlinear filtering: Interacting particle resolution},
  author={Del Moral, Pierre},
  journal={Comptes Rendus de l'Acad{\'e}mie des Sciences-Series I-Mathematics},
  volume={325},
  number={6},
  pages={653--658},
  year={1997},
  publisher={Elsevier}
}

@article{del2006sequential,
  title={Sequential {M}onte {C}arlo samplers},
  author={Del Moral, Pierre and Doucet, Arnaud and Jasra, Ajay},
  journal={Journal of the Royal Statistical Society Series B: Statistical Methodology},
  volume={68},
  number={3},
  pages={411--436},
  year={2006},
  publisher={Oxford University Press}
}

@inproceedings{andrieu2007non,
  title={Non-linear {M}arkov chain {M}onte {C}arlo},
  author={Andrieu, Christophe and Jasra, Ajay and Doucet, Arnaud and Del Moral, Pierre},
  booktitle={Esaim: Proceedings},
  volume={19},
  pages={79--84},
  year={2007},
  organization={EDP Sciences}
}

@article{del2001modeling,
  title={Modeling genetic algorithms with interacting particle systems},
  author={Del Moral, P and Kallel, Leila and Rowe, Jonathan},
  journal={Revista de Matem{\'a}tica: Teor{\'\i}a y Aplicaciones},
  volume={8},
  number={2},
  pages={19--77},
  year={2001}
}

@article{fournier2023convergence,
  title={Convergence of the empirical measure in expected wasserstein distance: non-asymptotic explicit bounds in Rd},
  author={Fournier, Nicolas},
  journal={ESAIM: Probability and Statistics},
  volume={27},
  pages={749--775},
  year={2023},
  publisher={EDP Sciences}
}

@article{gozlan2009characterization,
  title={A Characterization Of Dimension Free Concentration In Terms Of Transportation Inequalities},
  author={Gozlan, Nathael},
  journal={The Annals of Probability},
  volume={37},
  number={6},
  pages={2480--2498},
  year={2009}
}

@article{bolley2007quantitative,
  title={Quantitative concentration inequalities for empirical measures on non-compact spaces},
  author={Bolley, Fran{\c{c}}ois and Guillin, Arnaud and Villani, C{\'e}dric},
  journal={Probability Theory and Related Fields},
  volume={137},
  pages={541--593},
  year={2007},
  publisher={Springer}
}

@article{del2007sharp,
  title={Sharp propagation of chaos estimates for {F}eynman--{K}ac particle models},
  author={Del Moral, Pierre and Doucet, Arnaud and Peters, Gareth W},
  journal={Theory of Probability \& Its Applications},
  volume={51},
  number={3},
  pages={459--485},
  year={2007},
  publisher={SIAM}
}

@misc{chafai2020coupling,
  author={Djalil Chafaï},
  title={Coupling, divergences, and {M}arkov kernels},
  year={2020},
  url={https://djalil.chafai.net/blog/2020/03/15/coupling-divergences-and-markov-kernels/},
  note={Accessed: 2025-04-26}
}

@article{mckean1966class,
  title={A class of {M}arkov processes associated with nonlinear parabolic equations},
  author={{M}cKean Jr, Henry P},
  journal={Proceedings of the National Academy of Sciences},
  volume={56},
  number={6},
  pages={1907--1911},
  year={1966}
}

@article{guillin2022convergence,
  title={Convergence rates for the {V}lasov-{F}okker-{P}lanck equation and uniform in time propagation of chaos in non convex cases},
  author={Guillin, Arnaud and Le Bris, Pierre and Monmarch{\'e}, Pierre},
  journal={Electronic Journal of Probability},
  volume={27},
  pages={1--44},
  year={2022},
  publisher={The Institute of Mathematical Statistics and the Bernoulli Society}
}

@article{mischler2015new,
  title={A new approach to quantitative propagation of chaos for drift, diffusion and jump processes},
  author={Mischler, St{\'e}phane and Mouhot, Cl{\'e}ment and Wennberg, Bernt},
  journal={Probability Theory and Related Fields},
  volume={161},
  pages={1--59},
  year={2015},
  publisher={Springer}
}

@article{mischler2013kac,
  title={{K}ac’s program in kinetic theory},
  author={Mischler, St{\'e}phane and Mouhot, Cl{\'e}ment},
  journal={Inventiones mathematicae},
  volume={193},
  pages={1--147},
  year={2013},
  publisher={Springer}
}

@book{kolokoltsov2010nonlinear,
  title={Nonlinear {M}arkov processes and kinetic equations},
  author={Kolokoltsov, Vassili N},
  volume={182},
  year={2010},
  publisher={Cambridge University Press}
}

@article{guillin2022uniform,
  title={Uniform {P}oincar{\'e} and logarithmic {S}obolev inequalities for mean field particle systems},
  author={Guillin, Arnaud and Liu, Wei and Wu, Liming and Zhang, Chaoen},
  journal={The Annals of Applied Probability},
  volume={32},
  number={3},
  pages={1590--1614},
  year={2022},
  publisher={Institute of Mathematical Statistics}
}

@article{jabin2018quantitative,
  title={Quantitative estimates of propagation of chaos for stochastic systems with W-1,infty kernels},
  author={Jabin, Pierre-Emmanuel and Wang, Zhenfu},
  journal={Inventiones mathematicae},
  volume={214},
  pages={523--591},
  year={2018},
  publisher={Springer}
}

@article{lacker2023hierarchies,
  title={Hierarchies, entropy, and quantitative propagation of chaos for mean field diffusions},
  author={Lacker, Daniel},
  journal={Probability and Mathematical Physics},
  volume={4},
  number={2},
  pages={377--432},
  year={2023},
  publisher={Mathematical Sciences Publishers}
}

@article{lacker2023sharp,
  title={Sharp uniform-in-time propagation of chaos},
  author={Lacker, Daniel and Le Flem, Luc},
  journal={Probability Theory and Related Fields},
  volume={187},
  number={1-2},
  pages={443--480},
  year={2023},
  publisher={Springer}
}

@article{lasry2007mean,
  title={Mean field games},
  author={Lasry, Jean-Michel and Lions, Pierre-Louis},
  journal={Japanese journal of mathematics},
  volume={2},
  number={1},
  pages={229--260},
  year={2007},
  publisher={Springer}
}

@article{caines2006large,
  title={Large population stochastic dynamic games: closed-loop {M}cKean-{V}lasov systems and the Nash certainty equivalence principle},
  author={Caines, Peter E and Huang, Minyi and Malham{\'e}, Roland P},
  journal={Communications in Information and Systems},
  volume={6},
  number={3},
  pages={221--252},
  year={2006},
  publisher={International Press}
}

@article{del1998measure,
  title={Measure-valued processes and interacting particle systems. Application to nonlinear filtering problems},
  author={Del Moral, Pierre},
  journal={The Annals of Applied Probability},
  volume={8},
  number={2},
  pages={438--495},
  year={1998},
  publisher={Institute of Mathematical Statistics}
}

@article{gordon1995bayesian,
  title={{B}ayesian state estimation for tracking and guidance using the bootstrap filter},
  author={Gordon, Neil and Salmond, David and Ewing, Craig},
  journal={Journal of Guidance, Control, and Dynamics},
  volume={18},
  number={6},
  pages={1434--1443},
  year={1995}
}

@article{kitagawa1996monte,
  title={{M}onte {C}arlo filter and smoother for non-{G}aussian nonlinear state space models},
  author={Kitagawa, Genshiro},
  journal={Journal of computational and graphical statistics},
  volume={5},
  number={1},
  pages={1--25},
  year={1996},
  publisher={Taylor \& Francis}
}

@article{carvalho1997optimal,
  title={Optimal nonlinear filtering in GPS/INS integration},
  author={Carvalho, Himilcon and Del Moral, Pierre and Monin, Andr{\'e} and Salut, G{\'e}rard},
  journal={IEEE Transactions on Aerospace and Electronic Systems},
  volume={33},
  number={3},
  pages={835--850},
  year={1997},
  publisher={IEEE}
}

@article{del1999central,
  title={Central limit theorem for nonlinear filtering and interacting particle systems},
  author={Del Moral, Pierre and Guionnet, Alice},
  journal={Annals of Applied Probability},
  pages={275--297},
  year={1999},
  publisher={JSTOR}
}

@article{del2012adaptive,
  title={An adaptive sequential {M}onte {C}arlo method for approximate {B}ayesian computation},
  author={Del Moral, Pierre and Doucet, Arnaud and Jasra, Ajay},
  journal={Statistics and computing},
  volume={22},
  pages={1009--1020},
  year={2012},
  publisher={Springer}
}

@article{doi:10.1137/S0040585X97986825,
  author = {Butkovsky, O. A.},
  title = {On Ergodic Properties of Nonlinear {M}arkov Chains and Stochastic {M}cKean--{V}lasov Equations},
  journal = {Theory of Probability \& Its Applications},
  volume = {58},
  number = {4},
  pages = {661-674},
  year = {2014},
  doi = {10.1137/S0040585X97986825},
  URL = {https://doi.org/10.1137/S0040585X97986825},
  eprint = {https://doi.org/10.1137/S0040585X97986825}
}

@article{moral2010interacting,
  title={Interacting {M}arkov Chain {M}onte {C}arlo Methods For Solving Nonlinear Measure-Valued Equations},
  author={Moral, Pierre Del and Doucet, Arnaud},
  journal={The Annals of Applied Probability},
  pages={593--639},
  year={2010},
  publisher={JSTOR}
}

@article{andrieu2010particle,
  title={Particle {M}arkov chain {M}onte {C}arlo methods},
  author={Andrieu, Christophe and Doucet, Arnaud and Holenstein, Roman},
  journal={Journal of the Royal Statistical Society Series B: Statistical Methodology},
  volume={72},
  number={3},
  pages={269--342},
  year={2010},
  publisher={Oxford University Press}
}

@inproceedings{rainforth2016interacting,
  title={Interacting particle {M}arkov chain {M}onte {C}arlo},
  author={Rainforth, Tom and Naesseth, Christian and Lindsten, Fredrik and Paige, Brooks and Vandemeent, Jan-Willem and Doucet, Arnaud and Wood, Frank},
  booktitle={International Conference on Machine Learning},
  pages={2616--2625},
  year={2016},
  organization={PMLR}
}

@book{kac1959probability,
  title={Probability and related topics in physical sciences},
  author={{K}ac, Mark},
  volume={1},
  year={1959},
  publisher={American Mathematical Soc.}
}

@article{carrillo2006contractions,
  title={Contractions in the 2-Wasserstein length space and thermalization of granular media},
  author={Carrillo, Jos{\'e} A and McCann, Robert J and Villani, C{\'e}dric},
  journal={Archive for Rational Mechanics and Analysis},
  volume={179},
  pages={217--263},
  year={2006},
  publisher={Springer}
}

@article{bakry1997sobolev,
  title={On {S}obolev and logarithmic {S}obolev inequalities for {M}arkov semigroups},
  author={Bakry, Dominique},
  journal={New trends in stochastic analysis (Charingworth, 1994)},
  pages={43--75},
  year={1997},
  publisher={World Scientific}
}

@article{eberle2019quantitative,
  title={Quantitative Harris-type theorems for diffusions and {M}cKean--{V}lasov processes},
  author={Eberle, Andreas and Guillin, Arnaud and Zimmer, Raphael},
  journal={Transactions of the American Mathematical Society},
  volume={371},
  number={10},
  pages={7135--7173},
  year={2019}
}

@inproceedings{del2001stability,
  title={On the stability of interacting processes with applications to filtering and genetic algorithms},
  author={Del Moral, Pierre and Guionnet, Alice},
  booktitle={Annales de l'Institut Henri Poincar{\'e} (B) Probability and Statistics},
  volume={37},
  number={2},
  pages={155--194},
  year={2001},
  organization={Elsevier}
}

@article{djuric2003particle,
  title={Particle filtering},
  author={Djuric, Petar M and Kotecha, Jayesh H and Zhang, Jianqui and Huang, Yufei and Ghirmai, Tadesse and Bugallo, M{\'o}nica F and Miguez, Joaquin},
  journal={IEEE signal processing magazine},
  volume={20},
  number={5},
  pages={19--38},
  year={2003},
  publisher={IEEE}
}

@article{metropolis1953equation,
  title={Equation of state calculations by fast computing machines},
  author={Metropolis, Nicholas and Rosenbluth, Arianna W and Rosenbluth, Marshall N and Teller, Augusta H and Teller, Edward},
  journal={The journal of chemical physics},
  volume={21},
  number={6},
  pages={1087--1092},
  year={1953},
  publisher={American Institute of Physics}
}

@article{metropolis1949monte,
  title={The {M}onte {C}arlo method},
  author={Metropolis, Nicholas and Ulam, Stanislaw},
  journal={Journal of the American statistical association},
  volume={44},
  number={247},
  pages={335--341},
  year={1949},
  publisher={Taylor \& Francis}
}

@article{hastings1970monte,
  title={{M}onte {C}arlo sampling methods using {M}arkov chains and their applications},
  author={Hastings, WK},
  journal={Biometrika},
  volume={57},
  number={1},
  pages={97--109},
  year={1970}
}

@article{shchegolev2022convergence,
  title={On convergence rate bounds for a class of nonlinear {M}arkov chains},
  author={Shchegolev, Alexander and Veretennikov, Alexander},
  journal={arXiv preprint arXiv:2209.12834},
  year={2022}
}

@article{shchegolev2022new,
  title={A new rate of convergence estimate for homogeneous discrete-time nonlinear {M}arkov chains},
  author={Shchegolev, Aleksandr A},
  journal={Random Operators and Stochastic Equations},
  volume={30},
  number={3},
  pages={205--213},
  year={2022},
  publisher={De Gruyter}
}

@article{xu2022estimation,
  title={Estimation and Application of the Convergence Bounds for Nonlinear {M}arkov Chains},
  author={Xu, Kaichen},
  journal={arXiv preprint arXiv:2212.05304},
  year={2022}
}

@article{cattiaux2008probabilistic,
  title={Probabilistic approach for granular media equations in the non-uniformly convex case},
  author={Cattiaux, Patrick and Guillin, Arnaud and Malrieu, Florent},
  journal={Probability theory and related fields},
  volume={140},
  pages={19--40},
  year={2008},
  publisher={Springer}
}

@article{hauray2014kac,
  title={On {K}ac's chaos and related problems},
  author={Hauray, Maxime and Mischler, St{\'e}phane},
  journal={Journal of Functional Analysis},
  volume={266},
  number={10},
  pages={6055--6157},
  year={2014},
  publisher={Elsevier}
}

@article{bossy1997stochastic,
  title={A stochastic particle method for the {M}cKean-{V}lasov and the Burgers equation},
  author={Bossy, Mireille and Talay, Denis},
  journal={Mathematics of computation},
  volume={66},
  number={217},
  pages={157--192},
  year={1997}
}

@inproceedings{gordon1993novel,
  title={Novel approach to nonlinear/non-{G}aussian {B}ayesian state estimation},
  author={Gordon, Neil J and Salmond, David J and Smith, Adrian FM},
  booktitle={IEE proceedings F (radar and signal processing)},
  volume={140},
  number={2},
  pages={107--113},
  year={1993},
  organization={IET}
}

@article{handschin1970monte,
  title={{M}onte {C}arlo techniques for prediction and filtering of non-linear stochastic processes},
  author={Handschin, JE},
  journal={Automatica},
  volume={6},
  number={4},
  pages={555--563},
  year={1970},
  publisher={Elsevier}
}

@article{freed1981polymers,
  title={Polymers as self-avoiding walks},
  author={Freed, Karl F},
  journal={The Annals of Probability},
  pages={537--554},
  year={1981},
  publisher={JSTOR}
}

@book{doucet2013sequential,
  title={Sequential {M}onte {C}arlo Methods in Practice},
  author={Doucet, A. and Smith, A. and de Freitas, N. and Gordon, N.},
  isbn={9781475734379},
  series={Information Science and Statistics},
  url={https://books.google.ca/books?id=BWPaBwAAQBAJ},
  year={2013},
  publisher={Springer New York}
}

@inproceedings{bolley2005weighted,
  title={Weighted {C}sisz{\'a}r-{K}ullback-{P}insker inequalities and applications to transportation inequalities},
  author={Bolley, Fran{\c{c}}ois and Villani, C{\'e}dric},
  booktitle={Annales de la Facult{\'e} des sciences de Toulouse: Math{\'e}matiques},
  volume={14},
  number={3},
  pages={331--352},
  year={2005}
}

@book{graf2000foundations,
  title={Foundations of quantization for probability distributions},
  author={Graf, Siegfried and Luschgy, Harald},
  year={2000},
  publisher={Springer Science \& Business Media}
}

@inproceedings{hairer2011yet,
  title={Yet another look at {H}arris’ ergodic theorem for {M}arkov chains},
  author={Hairer, Martin and Mattingly, Jonathan C},
  booktitle={Seminar on Stochastic Analysis, Random Fields and Applications VI: Centro Stefano Franscini, Ascona, May 2008},
  pages={109--117},
  year={2011},
  organization={Springer}
}

@article{eberle2016reflection,
  title={Reflection couplings and contraction rates for diffusions},
  author={Eberle, Andreas},
  journal={Probability theory and related fields},
  volume={166},
  number={3},
  pages={851--886},
  year={2016},
  publisher={Springer}
}
	\newpage
	\appendix
	
    \section{Auxiliary Results}\label{app:auxiliary_results}
    \subsection{Properties of the Wasserstein Distance}

    \begin{proposition}[Basic Properties]
        \hfill
        \begin{enumerate}
            \item Let $K, Q$ be a Markov kernels on $\R^d$ s.t. $K(x, \bullet), Q(x, \bullet)\in \calP_1(\R^d)~\forall x\in \R^d$ and $\mu\in\calP_1(\R^d)$. Then
            \[
                \W_1(\mu K, \mu Q)\leq \int \W_1(K(x, \bullet), Q(x,\bullet))\mu(\dd x)
            \]
            \item Let $K$ be a Markov kernel s.t. $K(x, \bullet)\in \calP_p(\R^d)$, and $\mu,\nu\in\calP(\R^d)$. Then 
            \[
                \W_p^p(\mu K, \nu K)\leq \inf_{\gamma\in\calC(\mu,\nu)}\int \W_p^p(K(x, \bullet), K(y, \bullet))\gamma(\dd x, \dd y)
            \]
        \end{enumerate}
    \end{proposition}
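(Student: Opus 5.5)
Both items follow by building couplings from measurable selections of optimal couplings and integrating them against a base measure. This gives an admissible coupling of the target pair, whose cost then bounds the relevant Wasserstein quantity.

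For item 1, I will use the optimal couplings $\gamma_x\in\calC(K(x,\bullet),Q(x,\bullet))$, which exist for each $x$ because $K(x,\bullet),Q(x,\bullet)\in\calP_1(\R^d)$. I then need $x\mapsto\gamma_x$ to be a measurable kernel. The cleanest route is a measurable selection theorem for optimal transport plans (Villani, \emph{Optimal Transport}, Corollary 5.22), which applies since $x\mapsto K(x,\bullet)$ and $x\mapsto Q(x,\bullet)$ are measurable into $\calP_1$. Alternatively, I can avoid selection by taking $\veps$-optimal couplings on a countable partition and letting $\veps\to0$. With the kernel in hand, I define $\Gamma(\dd y,\dd z):=\int\mu(\dd x)\gamma_x(\dd y,\dd z)$. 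Its marginals are $\mu K$ and $\mu Q$, so
\[
\W_1(\mu K,\mu Q)\le\int\|y-z\|\Gamma(\dd y,\dd z)=\int\W_1(K(x,\bullet),Q(x,\bullet))\mu(\dd x).
\]
A duality-based alternative also works and avoids measurability issues entirely. For $f\in\lip_1$ I have $|\mu K f-\mu Qf|\le\int|Kf(x)-Qf(x)|\mu(\dd x)\le\int\W_1(K(x,\bullet),Q(x,\bullet))\mu(\dd x)$, and taking the supremum over $f$ together with Kantorovich duality concludes. I will use this duality argument for item 1.

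For item 2 duality is unavailable for $p>1$, so I use the coupling construction. I fix $\gamma\in\calC(\mu,\nu)$ and a measurable family of optimal couplings $\pi_{x,y}\in\calC(K(x,\bullet),K(y,\bullet))$ for the $\W_p$ cost; such a family again exists by the measurable selection result cited above. I then set $\Gamma(\dd u,\dd v):=\int\gamma(\dd x,\dd y)\pi_{x,y}(\dd u,\dd v)$. Its first marginal is $\int\gamma(\dd x,\dd y)K(x,\dd u)=\mu K$, and similarly its second marginal is $\nu K$. Hence
\[
\W_p^p(\mu K,\nu K)\le\int\|u-v\|^p\Gamma=\int\W_p^p(K(x,\bullet),K(y,\bullet))\gamma(\dd x,\dd y).
\]
Taking the infimum over $\gamma$ gives the claim. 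If the right side is infinite there is nothing to prove.

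The main obstacle is the measurability of $(x,y)\mapsto\pi_{x,y}$. I will handle it by citing the measurable selection of optimal plans, with the $\veps$-approximation argument as a fallback. Everything else is Fubini.
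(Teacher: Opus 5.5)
Your proposal is correct and follows the paper's proof: item 1 uses Kantorovich duality with $1$-Lipschitz test functions, and item 2 glues a measurable family of $\W_p$-optimal couplings $\pi_{x,y}$ against $\gamma\in\calC(\mu,\nu)$ and then takes the infimum over $\gamma$. Citing a specific measurable-selection result for optimal plans makes precise the step the paper only calls ``standard measurable selection arguments.''
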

    \begin{proof}
        \hfill
        \begin{enumerate}
            \item \begin{align*}
                \W_1(\mu K, \mu Q) &=\sup_{f\in \lip(1)}|\mu Kf - \mu Qf|\\
                &\leq \sup_{f\in \lip(1)}\int \mu(\dd x)|Kf(x) - Qf(x)|\\
                &\leq \int \mu(\dd x)\W_1(K(x,\bullet), Q(x, \bullet)).
            \end{align*}
            \item Following the argument from \cite{chafai2020coupling}, let $\pi_{x,y}$ be a coupling of $K(x, \bullet),K(y,\bullet)$ for every $x,y\in\R^d$ and let $\gamma$ be a coupling of $\mu,\nu$. Then $\pi(\dd x, \dd y):=\int \pi_{x,y}\gamma(\dd x, \dd y)$ is a coupling of $\mu K, \nu K$: 
            \begin{align*}
                \int_{y'} \pi(\dd x', \dd y') &= \int_{y'} \int_{x,y}\pi_{x,y}(\dd x', \dd y')\gamma(\dd x, \dd y)\\
                &= \int_{x,y}K(x, \dd x')\gamma(\dd x, \dd y)\\
                &= \int_{x}K(x, \dd x')\mu(\dd x)\\
                &= \mu K
            \end{align*}
            and similarly for the other marginal. Then, by standard measurable selection arguments, let $\pi_{x,y}$ be a $\W_p$-optimal coupling for $K(x,\bullet), K(y,\bullet)$. Then we have
            \begin{align*}
                \W_p^p(\mu K, \nu K)&\leq \int \|x' - y'\|^p \pi(\dd x', \dd y')\\
                &=\iint \|x' - y'\|^p \pi_{x,y}(\dd x', \dd y')\gamma(\dd x, \dd y)\\
                &=  \int \W_p^p(K(x, \bullet), K(y,\bullet))\gamma(\dd x, \dd y).
            \end{align*}
        \end{enumerate}
    \end{proof}

    \begin{proposition}[Contractions]\label{prop:wass_contract}
        Let $K$ be a Markov kernel and $K(x, \bullet)\in \W_p(\R^d)~\forall x\in \R^d$. Then
        \[
            \tau_p(K) = \sup_{\mu,\nu\in\calP_p(\R^d),\mu\neq \nu}\frac{\W_p(\mu K, \nu K)}{\W_p(\mu,\nu)} = \sup_{x\neq y}\frac{\W_p(K(x, \bullet), K(y, \bullet))}{\|x -y\|}.
        \]
    \end{proposition}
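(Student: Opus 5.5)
We prove the two inequalities $\le$ and $\ge$ separately, writing $L:=\sup_{x\neq y}\W_p(K(x,\bullet),K(y,\bullet))/\|x-y\|$ for the right-hand side.

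\emph{Lower bound ($\tau_p(K)\ge L$).} Restrict the supremum defining $\tau_p(K)$ to Dirac masses $\mu=\delta_x$, $\nu=\delta_y$ with $x\neq y$. These lie in $\calP_p(\R^d)$, and $\W_p(\delta_x,\delta_y)=\|x-y\|$ because the only coupling is $\delta_{(x,y)}$. Moreover $\delta_x K=K(x,\bullet)$. Hence each ratio $\W_p(K(x,\bullet),K(y,\bullet))/\|x-y\|$ appears in the supremum defining $\tau_p(K)$, and taking the supremum over $x\neq y$ gives $\tau_p(K)\geq L$.

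\emph{Upper bound ($\tau_p(K)\le L$).} Assume $L<\infty$; otherwise there is nothing to prove. By definition of $L$, for all $x,y$,
\[
\W_p^p(K(x,\bullet),K(y,\bullet))\le L^p\|x-y\|^p,
\]
which also holds trivially when $x=y$. Fix $\mu,\nu\in\calP_p(\R^d)$ and apply part 2 of the Basic Properties proposition. For any $\gamma\in\calC(\mu,\nu)$ this gives
\[
\W_p^p(\mu K,\nu K)\le\int \W_p^p(K(x,\bullet),K(y,\bullet))\,\gamma(\dd x,\dd y)\le L^p\int\|x-y\|^p\,\gamma(\dd x,\dd y).
\]
Taking the infimum over $\gamma$ yields $\W_p^p(\mu K,\nu K)\le L^p\W_p^p(\mu,\nu)$. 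Taking $p$-th roots, dividing by $\W_p(\mu,\nu)>0$ for $\mu\neq\nu$, and taking the supremum gives $\tau_p(K)\le L$.

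\emph{Technical points.} We must check that $\mu K\in\calP_p(\R^d)$ so that the ratio makes sense. When $L<\infty$ this holds: taking $y=0$,
\[
\M_p(K(x,\bullet))^{1/p}\le \M_p(K(0,\bullet))^{1/p}+L\|x\|,
\]
and integrating against $\mu$ gives a finite moment. The main potential obstacle is the measurable selection of optimal couplings $\pi_{x,y}$ used inside part 2 of the Basic Properties proposition. The paper already invokes this, but it can be avoided entirely. Instead of optimal couplings, take $\varepsilon$-optimal couplings chosen measurably; for example, approximate $\gamma$ by discrete couplings, where only countably many selections are needed. Then let $\varepsilon\to0$ and pass to the limit using lower semicontinuity of $\W_p$.
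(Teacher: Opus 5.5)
Your proof is correct and follows the paper's argument. The lower bound comes from restricting to Dirac masses, and the upper bound from part 2 of the Basic Properties proposition, bounding the integrand by $L^p\|x-y\|^p$ and taking the infimum over couplings. Your technical remarks go beyond what the paper spells out: the finiteness of $\M_p(\mu K)$ when $L<\infty$, and bypassing measurable selection by discretizing $\gamma$. In the latter, the limit step relies on the weak continuity of $x\mapsto K(x,\bullet)$, which $L<\infty$ guarantees.
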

    \begin{proof}
        For one inequality, we have
        \[
            \sup_{x \neq y}\frac{\W_p(K(x, \bullet),K(y, \bullet))}{\|x - y\|}=\sup_{\delta_x \neq \delta_y}\frac{\W_p(\delta_xK,\delta_yK)}{\W_p(\delta_x, \delta_y)}\leq \sup_{\mu\neq \nu}\frac{\W_p(\mu K,\nu K)}{\W_p(\mu,\nu)}
        \]For the reverse inequality
        \begin{align*}
            \W^p_p(\mu K, \nu K)&\leq \inf_{\gamma\in \calC(\mu,\nu)}\int \W_p^p(K(x, \bullet), K(y, \bullet))\gamma(\dd x, \dd y)\\
            &= \inf_{\gamma\in \calC(\mu,\nu)}\int \frac{\W^p_p(K(x, \bullet), K(y, \bullet))}{\|x-y\|^p}\|x-y\|^p\gamma(\dd x, \dd y)\\
            &\leq \sup_{x\neq y}\frac{\W^p_p(K(x, \bullet), K(y, \bullet))}{\|x-y\|^p} \inf_{\gamma\in \calP(\mu,\nu)}\int \|x-y\|^p \gamma(\dd x, \dd y)\\
            &= \sup_{x\neq y}\frac{\W^p_p(K(x, \bullet), K(y, \bullet))}{\|x-y\|^p} \W_p^p(\mu,\nu)
        \end{align*}
        hence dividing and raising both sides to the power $1/p$ implies
        \[
            \sup_{\mu\neq\nu}\frac{\W_p(\mu K,\nu K)}{\W_p(\mu,\nu)} \leq \sup_{x\neq y}\frac{\W_p( K(x, \bullet), K(y,\bullet))}{\|x - y\|}.
        \]
    \end{proof}

    \subsection{Tensorization}
    First we have the notation
    \begin{align*}
        [N]&:= \{1,\dots,N\}\\
        (N)_q&:= \frac{N!}{(N-q)!}\\
        [N]^{[q]}&:= \{\alpha :[q]\to [N]\}\\
        [N,q]&:=\{\alpha:[q]\to[N]~|~ \alpha\text{ is 1-1}\}
    \end{align*}

    Let $X:=(X^1,\dots,X^N)$ and define
    \[
        m(X)^{\otimes q}:= \frac{1}{N^q}\sum_{\alpha\in [N]^{[q]}}\delta_{(X^{\alpha(1)},\dots,X^{\alpha(q)})}
    \]and
    \[
        m(X)^{\odot q}:= \frac{1}{(N)_q}\sum_{\alpha\in [N, q]}\delta_{(X^{\alpha(1)},\dots,X^{\alpha(q)})}.
    \]Note that $|[N]^{[q]}|=N^q$ and $|[N,q]|=(N)_q$, and that $m(X)^{\otimes q}$ is precisely equal to the actual tensor product of $m(X)$ $q$-times since $\delta_{X^{\alpha(1)}}\cdots \delta_{X^{\alpha(q)}}=\delta_{(X^{\alpha(1)},\dots,X^{\alpha(q)})}$. There is a useful connection between tensor products and expectations: Let $\eta^{N,q}:=\law(X^1,\dots,X^q)$ where $X^i$ are \emph{exchangeable} but not necessarily independent. Then by definition we can write $\eta^{N,q}(f) = \E[f(X^1,\dots,X^q)]$. We have the equality
    \[
        \eta^{N,q}(f) = \E[f(X^1,\dots,X^q)] = \E[m(X)^{\odot q}].
    \]To see this, the $X^i$ are exchangeable so each q-subset of distinct elements has the same law. Hence $\E[f(X^{\alpha(1)},\dots,X^{\alpha(q)})]=\E[f(X^1,\dots,X^q)]$ for each $\alpha\in [N,q]$. The result immediately follows.
    
    \TensorizationLemma*

    \begin{proof}
        \begin{align*}
            \W_1(\mu^{N,q},\nu^{\otimes q})&\leq \E[\W_1((m(X))^{\odot q},\nu^{\otimes q})]\\
            &\leq \E[\W_1(m(X)^{\odot q},m(X)^{\otimes q})] + \E[\W_1(m(X)^{\otimes q},\nu^{\otimes q})]\\
            &\leq \E[\W_1(m(X)^{\odot q},m(X)^{\otimes q})] + q \E[\W_1(m(X),\nu)].
        \end{align*}
        The first term is handled in Lemma~\ref{lem:odot}.
    \end{proof}

    \begin{lemma}\label{lem:odot}
        Let $X:=(X^1,\dots,X^N)$ be a collection of exchangeable random variables. Then
        \[
            \E[\W_1(m(X)^{\otimes q}, m(X)^{\odot q})]\leq 2\frac{q^3}{N}\E[\|X^1\|]
        \]
    \end{lemma}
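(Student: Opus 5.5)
We need to bound $\E[\W_1(m(X)^{\otimes q}, m(X)^{\odot q})]$. We build an explicit coupling between the two empirical measures on $(\R^d)^{\otimes q}$ for each fixed realization of $X$, and then take expectations using exchangeability.

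\textbf{Step 1: the coupling.} Write $A:=[N]^{[q]}$ and $B:=[N,q]\subset A$. The measure $m(X)^{\otimes q}$ is the law of $(X^{\alpha(1)},\dots,X^{\alpha(q)})$ with $\alpha$ uniform on $A$. The measure $m(X)^{\odot q}$ is the same with $\alpha$ uniform on $B$. Uniform on $A$ conditioned on $B$ is uniform on $B$, so the following is a valid coupling. Draw $\alpha$ uniform on $A$. If $\alpha\in B$, set $\beta=\alpha$. Otherwise draw $\beta$ uniform on $B$, independently of $\alpha$. A quick check gives $\P(\beta=b)=\P(\alpha\in B)/(N)_q+\P(\alpha\notin B)/(N)_q=1/(N)_q$, so $\beta$ is uniform on $B$.

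\textbf{Step 2: bound the cost.} Under this coupling the transport cost is supported on the event $\{\alpha\notin B\}$. On that event we use
\[
\Big\|(X^{\alpha(i)})_i-(X^{\beta(i)})_i\Big\|\le \sum_{i=1}^q\big(\|X^{\alpha(i)}\|+\|X^{\beta(i)}\|\big).
\]
This gives
\[
\W_1(m(X)^{\otimes q}, m(X)^{\odot q})\le \frac{1}{N^q}\sum_{\alpha\in A\setminus B}\sum_{i=1}^q\|X^{\alpha(i)}\| + \P(\alpha\notin B)\,\frac{1}{(N)_q}\sum_{\beta\in B}\sum_{i=1}^q\|X^{\beta(i)}\|.
\]

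\textbf{Step 3: take expectations.} Exchangeability gives $\E\|X^{j}\|=\E\|X^1\|$ for every index $j$. Hence each inner sum has expectation $q\E\|X^1\|$ per multi-index, and the expectation of the right-hand side is at most $2q\,\P(\alpha\notin B)\,\E\|X^1\|$.

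\textbf{Step 4: the collision probability.} A union bound over pairs $i<j$ gives
\[
\P(\alpha\notin B)=1-\frac{(N)_q}{N^q}\le \binom{q}{2}\frac1N\le \frac{q^2}{N}.
\]
Combining with Step 3 yields the bound $2q^3\E\|X^1\|/N$ as claimed; the argument in fact gives the slightly better constant $q^3/N$.

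The main point requiring care is Step 1: checking that the coupling is legitimate, i.e.\ that the second marginal is exactly $m(X)^{\odot q}$. This is done by the conditional-uniformity computation shown there. Everything after that is routine bookkeeping.
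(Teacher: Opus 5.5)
Your argument is correct and is essentially the paper's proof. Your coupling (keep $\alpha$ when it is injective, otherwise redraw $\beta$ uniformly on $[N,q]$) is an explicit realization of the decomposition $m(X)^{\otimes q}=m(X)^{\odot q}R^{(q)}_N$ with $R^{(q)}_N=\frac{(N)_q}{N^q}Id+\left(1-\frac{(N)_q}{N^q}\right)\tilde{R}^{(q)}_N$ that the paper imports from Del Moral; after that come the same crude first-moment bound on the non-injective part, exchangeability, and the collision estimate for $1-(N)_q/N^q$. Building the coupling by hand makes the proof self-contained and avoids the paper's loosely stated identity $\E[\M_1(m(X)^{\odot q}\tilde{R}^{(q)}_N)]=\E[\|(X^1,\dots,X^q)\|]$, where only the inequality with right-hand side $q\E[\|X^1\|]$ is needed, which is what you prove; your pairwise union bound $q(q-1)/(2N)$ also gives the sharper constant $q^2(q-1)/N\le q^3/N$.
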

    \begin{proof}
        We know that the following "combinatorial transport" equation holds from \cite[Proposition~8.6.1]{del2004feynman} 
        \[
            m(X)^{\otimes q} = m(X)^{\odot q} R^{(q)}_N
        \]
        where 
        \[
            R^{(q)}_N: = \frac{(N)_q}{N^q}Id + (1 - \frac{(N)_q}{N^q})\tilde{R}^{(q)}_N
        \] 
        where is a $\tilde{R}^{(q)}_N$ is a Markov kernel. Then
        \begin{align*}
            \W_1(m(X)^{\odot q} R^{(q)}_N, m(X)^{\odot q}) &= W_1(m(X)^{\odot q} R^{(q)}_N, m(X)^{\odot q}Id)\\
            &\leq (1-\frac{(N)_q}{N^q})\W_1(m(X)^{\odot q}\tilde{R}^N_q, m(X)^{\odot q})\\
            &\leq \frac{q^2}{N}\W_1(m(X)^{\odot q}\tilde{R}^N_q, m(X)^{\odot q})
        \end{align*}
        But we have
        \[
            \W_1(m(X)^{\odot q}\tilde{R}^N_q, m(X)^{\odot q})\leq \M_1(m(X)^{\odot q}\tilde{R}^N_q) + \M_1(m(X)^{\odot q}).
        \]Also, taking expectations
        \begin{align*}
            \E[\M_1(m(X)^{\odot q})]&=\E\left[\frac{1}{(N)_q}\sum_{\alpha\in [N,q]}\|X^{\alpha(1)},\dots,X^{\alpha(q)}\|\right]\\
            &= \frac{1}{(N)_q}\sum_{\alpha\in [N,q]}\E[\|(X^{\alpha(1)},\dots,X^{\alpha(q)})\|]\\
            &= \E[\|(X^1,\dots,X^q)\|.
        \end{align*}
        But $\wtilde{R}^N_q$ is a purely combinatorial construction that treats the indices completely symmetrically, and the $X^i$s are exchangeable, hence under expectation we must also have this symmetry and so
        \[
            \E[\M_1(m(X)^{\odot q}\tilde{R}^N_q)] = \E[\|(X^1,\dots,X^q)\|].
        \]The proof is completed by noting that $\|x\|\leq \|x^1\| + \cdots + \|x^q\|$ and using exchangeability.
    \end{proof}

\end{document}